\documentclass[12pt, reqno]{amsart}
\usepackage{amsfonts}
\usepackage{bbm}
\usepackage{amscd,amsfonts}
\usepackage{amssymb, eucal, amsfonts, amsmath, xypic, latexsym}
\usepackage{pifont}
\usepackage{mathrsfs,color}
\usepackage{amsthm,indentfirst,bm,fancyhdr,dsfont}
\usepackage{graphicx}
\usepackage[all]{xy}
\usepackage[CJKbookmarks=true]{hyperref}

\setlength{\textheight}{8.6in} \setlength{\textwidth}{35pc}
\setlength{\topmargin}{-0.1in} \setlength{\footskip}{0.2in}
\setlength{\oddsidemargin}{.573125pc}
\setlength{\evensidemargin}{\oddsidemargin}

\newtheorem{theorem}{Theorem}[section]
\newtheorem{lemma}[theorem]{Lemma}
\newtheorem{prop}[theorem]{Proposition}
\newtheorem{corollary}[theorem]{Corollary}
\theoremstyle{definition}

\newtheorem{defn}[theorem]{Definition}

\newtheorem{rem}[theorem]{Remark}

\newtheorem{conventions}[theorem]{Conventions}

\numberwithin{equation}{section}

%

\def\ggg{\mathfrak{g}}
\def\mmm{\mathfrak{m}}
\def\ppp{\mathfrak{p}}

\def\hhh{\mathfrak{h}}

\def\bbc{\mathbb{C}}
\def\bbf{\mathbb{F}}
\def\bbz{\mathbb{Z}}
\def\bbq{\mathbb{Q}}
\def\bbk{\mathds{k}}

\def\bo{{\bar 1}}
\def\bz{{\bar 0}}
\def\ev{{\text{ev}}}

\def\sfr{\textsf{r}}

\def\Lie{\text{Lie}}

{\vskip-\lastskip\medskip
  \noindent
  {\em #1.}\enspace
  }%
{\qed\par\medskip
  }

\begin{document}
\title{Finite $W$-superalgebras for basic Lie superalgebras}
\author{Yang Zeng and Bin Shu}
\thanks{\nonumber{{\it{Mathematics Subject Classification}} (2000 {\it{revision}})
Primary 17B35. Secondary 17B81. This work is  supported partially by the NSF of China (No. 11271130; 11201293; 111126062),  the Innovation Program of Shanghai Municipal Education Commission (No. 12zz038). }}
\address{School of Science, Nanjing Audit University, Nanjing, Jiangsu Province 211815, China}
\email{zengyang214@163.com}
\address{Department of Mathematics, East China Normal University, Shanghai 200241, China}
\email{bshu@math.ecnu.edu.cn}


\begin{abstract}
We consider the finite $W$-superalgebra $U(\mathfrak{g_\bbf},e)$ for a basic Lie superalgebra ${\ggg}_\bbf=(\ggg_\bbf)_\bz+(\ggg_\bbf)_\bo$ associated with a nilpotent element $e\in (\ggg_\bbf)_{\bar0}$ both over the field of complex numbers $\bbf=\mathbb{C}$ and over  $\bbf={\bbk}$ an algebraically closed field of positive characteristic. In this paper, we mainly present the PBW theorem for $U({\ggg}_\bbf,e)$. Then the construction of $U({\ggg}_\bbf,e)$ can be understood well, which in contrast with finite $W$-algebras, is divided into two cases in virtue of the parity of $\text{dim}\,\mathfrak{g_\bbf}(-1)_{\bar1}$. This observation will be a basis of our sequent work on the dimensional lower bounds in the super Kac-Weisfeiler property of modular representations of basic Lie superalgebras (cf. \cite[\S7-\S9]{ZS}).
\end{abstract}
\maketitle

\section*{Introduction}

\subsection{}\label{Premetintro} A finite $W$-algebra $U(\ggg,e)$ is a certain associative algebra associated to a complex semisimple Lie algebra ${\ggg}$ and a nilpotent element $e\in{\ggg}$. The study of finite $W$-algebras can be traced back to Kostant's work in the case when $e$ is regular \cite{Ko}, whose construction was generalized to arbitrary even nilpotent elements by Lynch \cite{Ly}. In the literature of mathematical physics, the finite $W$-algebras appeared in the work of de Boer and Tjin \cite{BT} from the viewpoint of BRST quantum hamiltonian reduction. The history is further complicated as there is enormous amount of work on affine $W$-algebras in the 1990's preceding the recent interest of finite $W$-algebras. Afterwards, Premet developed the finite $W$-algebras in full generality in \cite{P2}. On his way of proving the celebrated Kac-Weisfeiler conjecture for Lie algebras of reductive groups in \cite{P1}, Premet first constructed the modular version of finite $W$-algebras in \cite{P2}. By means of a complicated but natural ``admissible'' procedure, the finite $W$-algebras over the field of complex numbers were introduced, arising from the modular version, which showed that they are filtrated deformations of the coordinate rings of Slodowy slices. There the most important ingredient is the construction of the PBW basis of finite $W$-algebras (cf. \cite[\S4]{P2}).

For finite $W$-algebras over the field of complex numbers, Brundan-Kleshchev showed that those can be realized as shifted Yangians for the type $A$ case \cite{BK2}. Finite $W$-algebras theory becomes a very active area, and we refer the readers to the survey papers \cite{L1}, \cite{P6}, \cite{W} and references therein for more details.

Aside from the advances in finite $W$-algebras over $\mathbb{C}$, the modular theory of finite $W$-algebras is also in excitingly developing. As a most remarkable work, Premet proved in \cite{P7} that under the assumption $p\gg 0$ for the positive characteristic field ${\bbk}=\overline{\mathbb{F}}_p$, if the $\bbc$-algebra $U(\ggg,e)$ has a one-dimensional representation (which has been proved by Losev \cite{L3} and Goodwin-R\"{o}hrle-Ubly \cite{GRU} for all cases excluding type $E_8$ with $e$ rigid), then the reduced enveloping algebra $U_\chi(\ggg_\bbk)$ of the modular counterpart $\ggg_\bbk$ of $\ggg$ possesses a simple module whose dimension is exactly the lower bound predicted by Kac-Weisfeiler conjecture mentioned above.

\subsection{} The theory of finite $W$-superalgebras was developed in the same time.  In the work of De Sole and Kac \cite{SK}, finite $W$-superalgebras were defined in terms of BRST cohomology under the background of vertex algebras and quantum reduction. The theory of finite $W$-superalgebras for the queer Lie superalgebras (which are not basic Lie superalgebras) over an algebraically closed field of characteristic $p>2$ was first introduced and discussed by Wang and Zhao in \cite{WZ2}, then studied by Zhao over the field of complex numbers in \cite{Z2}. The connection between super Yangians and finite $W$-superalgebras was first obtained by Broit and Ragoucy in \cite{BR}. In \cite{BBG2}, the finite $W$-superalgebra associated to a principal nilpotent element was developed by Brown-Brundan-Goodwin. Some related results in this situation were also obtained independently by Poletaeva and Serganova in \cite{PS}, \cite{PS2}, \cite{PS3}. In these papers, they studied some generalities on finite $W$-superalgebras over the field of complex numbers, and details in the regular case for basic Lie superalgebras or the queer Lie superalgebras are obtained. In \cite{Peng2} and \cite{Peng3}, Peng established a connection between finite $W$-superalgebras and super Yangians explicitly in type $A$ with the Jordan type of the nilpotent element $e$ satisfying certain conditions. Now the theory of $W$-superalgebras related to super Yangians is still under investigating.

In \cite{WZ}, Wang and Zhao initiated the study of the modular representations of basic Lie superalgebras over an algebraically closed field of positive characteristic. There they formulated the super Kac-Weisfeiler property for basic Lie superalgebras, and introduced the modular $W$-superalgebras (those modular $W$-superalgebras will be called reduced $W$-superalgebras in the present paper).

\subsection{}  The main purpose of the present paper is to develop the construction theory of finite $W$-superalgebras both over the field of complex numbers and over a field in prime characteristic. Our approach is roughly generalizing the ``modular $p$ reduction'' method developed by Premet for the finite $W$-algebras in \cite{P2} and \cite{P7}, based on the results of basic Lie superalgebras given by Wang and Zhao in \cite{WZ}. As mentioned in \S \ref{Premetintro}, it becomes a crucial task to construct the PBW basis in our arguments. We successfully accomplish it by lots of nontrivial computations. Let us make further introduction below.

Let ${\ggg}={\ggg}_{\bar0}+{\ggg}_{\bar1}$ be a basic Lie superalgebra over $\mathbb{C}$ 
and $e\in{\ggg}_{\bar0}$ a nilpotent element. Fix an $\mathfrak{sl}_2$-triple $f,h,e\in{\ggg}_{\bar0}$, and denote by ${\ggg}^e:=\text{Ker}(\mbox{ad}\,e)$ a subalgebra of ${\ggg}$. The linear
operator ad\,$h$ defines a $\mathbb{Z}$-grading ${\ggg}=\bigoplus\limits_{i\in\mathbb{Z}}{\ggg}(i)$. Define the Kazhdan degree on ${\ggg}$ by declaring $x\in{\ggg}(j)$ is of $(j+2)$ in the new degree. We construct a $\mathbb{C}$-algebra (which is called a finite $W$-superalgebra)$$U({\ggg},e)=(\text{End}_{\ggg}Q_{\chi})^{\text{op}},$$
where $Q_{\chi}$ is the generalized Gelfand-Graev ${\ggg}$-module associated to $e$. Associated with a filtration of $U(\ggg,e)$ arising from the Kazhdan degree, we can define the corresponding graded superalgebra $\text{gr}\,U(\ggg,e)$.
One of the main results of the paper (Theorem \ref{PBWC} arising from its modular version Theorem \ref{reduced Wg}) presents a PBW basis of $U(\ggg,e)$ compatible with the Kazhdan grading, which is a super version of \cite[Theorem 4.6]{P2}. Then the consequent observation is the following structural information of $U(\ggg,e)$.
\begin{theorem}\label{graded W}
Keep the notations as above. The following statements hold.
\begin{itemize}
\item[(1)] gr\,$U(\ggg,e)\cong S(\ggg^e)$ as $\mathbb{C}$-algebras when $\dim\ggg(-1)_{\bar1}$ is even;
\item[(2)] gr\,$U(\ggg,e)\cong S(\ggg^e)\otimes\mathbb{C}[\Theta]$ as vector spaces over $\mathbb{C}$ when $\dim\ggg(-1)_{\bar1}$ is odd.
\end{itemize}
Here $S(\ggg^e)$ is the supersymmetric algebra on $\ggg^e$, and
$\mathbb{C}[\Theta]$ is the exterior algebra generated by one element $\Theta$ for the case when dim\,${\ggg}(-1)_{\bar1}$ is odd (under the canonical grading mapping, the element $\Theta$ is actually the image of $\Theta_{l+q+1}$ appearing in Theorem \ref{PBWC}, which only occurs when the vector space $\ggg(-1)_{\bar1}$ is odd-dimensional).
\end{theorem}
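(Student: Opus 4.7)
The plan is to derive the structural statement for $\text{gr}\,U(\ggg,e)$ as a direct corollary of the PBW theorem~\ref{PBWC} already in hand. That theorem supplies generators $\Theta_1,\ldots,\Theta_l,\Theta_{l+1},\ldots,\Theta_{l+q}$ of $U(\ggg,e)$ whose images under the canonical grading map recover a fixed homogeneous basis $x_1,\ldots,x_{l+q}$ of $\ggg^e$ (even part first, odd part second), together with one extra odd generator $\Theta_{l+q+1}$ precisely when $\dim\ggg(-1)_{\bar1}$ is odd. The first step is to record that each $\Theta_i$ has a well-defined Kazhdan degree and that the ordered monomials in the $\Theta_i$'s (with exponent at most one on the odd generators) form a $\mathbb{C}$-basis of $U(\ggg,e)$; this is exactly the content of Theorem~\ref{PBWC}.

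For part~(1), suppose $\dim\ggg(-1)_{\bar1}$ is even, so only the generators $\Theta_1,\ldots,\Theta_{l+q}$ occur. The key computational input, extracted from the explicit form of the $\Theta_i$, is that every supercommutator $[\Theta_i,\Theta_j]$ lies in strictly lower Kazhdan filtration than the product $\Theta_i\Theta_j$. Granting this, the assignment $x_i\mapsto\bar\Theta_i$ extends uniquely to a homomorphism of $\mathbb{C}$-superalgebras $\Phi\colon S(\ggg^e)\to\text{gr}\,U(\ggg,e)$, which is surjective because $\text{gr}\,U(\ggg,e)$ is generated by the symbols $\bar\Theta_i$. Injectivity then follows by dimension comparison in each Kazhdan degree: the symbols of the ordered PBW monomials form a basis of $\text{gr}\,U(\ggg,e)$ matching the standard monomial basis of $S(\ggg^e)$ term by term.

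For part~(2), with $\dim\ggg(-1)_{\bar1}$ odd, the same counting argument produces only a vector space decomposition. The ordered PBW monomials in $\bar\Theta_1,\ldots,\bar\Theta_{l+q}$ span a subspace identified with $S(\ggg^e)$ exactly as in part~(1), while the extra odd generator $\bar\Theta_{l+q+1}=\Theta$ contributes a further factor spanned by $1$ and $\Theta$. Because $\Theta$ is odd and $\Theta_{l+q+1}^2$ lies in strictly lower Kazhdan filtration than its formal product, the symbol $\bar\Theta$ squares to zero in the relevant graded piece, so this extra factor exhausts an exterior algebra $\mathbb{C}[\Theta]$. Assembling the two halves via the PBW basis yields the claimed vector-space isomorphism $\text{gr}\,U(\ggg,e)\cong S(\ggg^e)\otimes\mathbb{C}[\Theta]$.

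The main obstacle is the first computational claim: that supercommutators of PBW generators drop strictly in Kazhdan degree. This is not formal from the statement of Theorem~\ref{PBWC} alone but depends on the explicit ``modular $p$ reduction'' construction of the $\Theta_i$ and the identification of the classical limit with the supersymmetric algebra on $\ggg^e$. The parallel degree-drop property for $\Theta_{l+q+1}^2$ in part~(2) is the subtlety that forces the odd case to be stated only as a vector-space isomorphism: $\Theta_{l+q+1}^2$ need not vanish in $U(\ggg,e)$ but rather only in its graded piece, so $\mathbb{C}[\Theta]$ captures the leading behaviour while the full multiplicative structure of $\text{gr}\,U(\ggg,e)$ in this case is not a genuine tensor product of algebras.
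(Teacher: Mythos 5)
Your proposal is correct and follows essentially the same route as the paper, which obtains Theorem \ref{graded W} as a direct consequence of Theorem \ref{PBWC} (and Lemma \ref{hwc}) and omits the details. The only remark worth making is that the ``degree-drop'' input you flag as the main obstacle is in fact already recorded in Theorem \ref{PBWC}(4)--(5): $[\Theta_i,\Theta_j]\in\tilde{H}^{m_i+m_j+2}$ while $\Theta_i\Theta_j$ has Kazhdan degree $m_i+m_j+4$, and $[\Theta_{l+q+1},\Theta_{l+q+1}]=\text{id}$ has degree $0<2$, so no further computation is needed.
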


It is worth noting that after the draft of this paper has been written, we know from \cite{PS}, \cite{PS2} that Poletaeva and Serganova also noticed that the statements as  Theorem~\ref{graded W} may be true and formulated the corresponding conjecture in \cite[Conjecture 2.8]{PS2} recently. They proved that for any element $y\in{\ggg}^e$ if one can find $Y\in U({\ggg},e)$ such that $\text{gr}\,Y(1_\chi)=y$ (where $1_\chi$ denotes the image of $1$ in $Q_\chi$) under the Kazhdan grading, then Theorem~\ref{graded W} establishes.

Let us explain our approach. Let ${\ggg}_{\bbk}$ be the modular counterpart of Lie superalgebra $\ggg$ over a positive characteristic field ${\bbk}$. Some related topics on the reduced $W$-superalgebra $U_\chi({\ggg}_{\bbk},e)$ in positive characteristic are first studied in \textsection\ref{rew}, then we present the PBW theorem (Theorem~\ref{PBWC}) for the $\mathbb{C}$-algebra $U({\ggg},e)$ based on the parity of dim\,${\ggg}(-1)_{\bar1}$, respectively. Then Theorem~\ref{graded W} follows as a corollary of Theorem~\ref{PBWC}.

The key point above is that we find the construction of reduced $W$-superalgebra $U_\chi({\ggg}_{\bbk},e)$ critically depends on the parity of dim\,${\ggg}_{\bbk}(-1)_{\bar1}$.  When $r=\dim\ggg_\bbk(-1)_\bo$ is odd, $\ggg_\bbk(-1)_\bo$ is an odd-dimensional space with non-degenerate symmetric bilinear form $\langle\cdot,\cdot\rangle$, thereby satisfying $\langle v_i,v_j\rangle=\delta_{j,r-i+1}$ for $i,j=1,\cdots,r$ on a suitable basis $\{v_i\}$. In particular, $\langle v_{r+1\over 2},v_{r+1\over 2}\rangle=1$. Therefore, the leading terms of elements in $U_\chi(\ggg_\bbk,e)$ may admit a distinguished odd factor arising from $v_{r+1\over 2}$ (see the leading-terms Lemma \ref{hw}, Corollary \ref{rg} and their consequences).  This phenomenon makes the finite $W$-superalgebras strikingly different from their ordinary counterparts in the Lie algebra case. Hence the parity of $\dim\ggg(-1)_\bo$ becomes a crucial  factor deciding the change of the structure of finite $W$-superalgebras, which will be called a "detecting parity".

\subsection{} The paper is organized as follows. In \textsection\ref{prel}, some fundamental material on basic classical Lie superalgebras is recalled.
In \S\ref{FWC}, we introduce the finite $W$-superalgebra $U({\ggg},e)$ over $\mathbb{C}$, and the modular version  $U(\ggg_{\bbk},e)$ in prime characteristic $p$  along with the reduced $W$-superalgebra $U_\chi({\ggg}_{\bbk},e)$
{\sl(note that we abuse the notation $U(\ggg_\bbk,e)$ for $U_\chi(\ggg_\bbk,e)$ in \textsc{Abstract} for convenience of statements)}. In \S\ref{rew}, we introduce the generators and their relations for the ${\bbk}$-algebra $U_\chi({\ggg}_{\bbk},e)$, then  give the PBW Theorem on the basis of careful computation. In \S\ref{swc}, we formulate the PBW Theorem for finite $W$-superalgebra $U({\ggg},e)$ over $\mathbb{C}$ based on the results obtained in \textsection\ref{rew} by means of the ``admissible'' procedure. The relation between the refined finite $W$-superalgebra $Q_\chi^{\text{ad}{\mmm}'}$ in \cite[Remark 70]{W} and the finite $W$-superalgebra $U({\ggg},e)$ over $\mathbb{C}$ is discussed in the final part.

In a subsequent paper (see \cite[\S7-\S9]{ZS}), we will introduce the transition subalgebra $T({\ggg}_{\bbk},e)$ of $U(\ggg_\bbk,e)$, which can help us to better understand the construction of finite $W$-superalgebra $U(\ggg_{\bbk},e)$ in positive characteristic.
Based on the construction of the ${\bbk}$-algebra $T({\ggg}_{\bbk},e)$, we will formulate a conjecture on the minimal dimensional representations of finite $W$-superalgebras over the field of complex numbers, and prove that when the characteristic of the field $\bbk=\overline{\mathbb{F}}_p$ satisfies $p\gg 0$, the lower bounds predicted in the super Kac-Weisfeiler property \cite[Theorem 4.3]{WZ} can be reached under the assumption of this conjecture.

~

\subsection{} Throughout the paper we work with a field $\bbf$ for $\bbf=\mathbb{C}$ or $\bbf=$the algebraically closed field ${\bbk}=\overline{\mathbb{F}}_p$ in odd prime characteristic $p$ as the ground field.

Let $\mathbb{Z}_+$ be the set of all the non-negative integers in $\mathbb{Z}$, and denote by $\mathbb{Z}_2$ the residue class ring modulo $2$ in $\mathbb{Z}$. A superspace is a $\mathbb{Z}_2$-graded vector space $V=V_{\bar0}\oplus V_{\bar1}$, in which we call elements in $V_{\bar0}$ and $V_{\bar1}$ even and odd, respectively. Write $|v|\in\mathbb{Z}_2$ for the parity (or degree) of $v\in V$, which is implicitly assumed to be $\mathbb{Z}_2$-homogeneous. We will use the notations
$$\text{\underline{dim}}V=(\text{dim}V_{\bar0},\text{dim}V_{\bar1}),\quad\text{dim}V=\text{dim}V_{\bar0}+\text{dim}V_{\bar1}.$$
All Lie superalgebras ${\ggg}$ will be assumed to be finite-dimensional.  Given an ${\bbf}$-algebra $\mathscr{A}$ we denote by $\mathscr{A}$-mod the category of all finite-dimensional left $\mathscr{A}$-modules.

By vector spaces, subalgebras, ideals, modules, and submodules {\textit{etc}}. we mean in the super sense unless otherwise specified, throughout the paper.

\section{Basic classical Lie superalgebras and the corresponding algebraic supergroups}\label{prel}
In this section, we will recall some knowledge on basic classical Lie superalgebras along with the corresponding algebraic supergroups. We refer the readers to  \cite{CW, K} and \cite{M} for Lie superalgebras and to \cite{FG} and \cite{SW} for algebraic supergroups.
\subsection{Basic classical Lie superalgebras} \label{basicLiesuper}
Following  \cite[\S2.3-\S2.4]{K}, \cite[\S1]{K2}, \cite[\S1]{CW} and \cite[\S2]{WZ},  we first recall the list of basic classical Lie superalgebras over $\bbf$ for $\bbf=\bbc$ or $\bbf=\bbk$.
These Lie superalgebras with even parts being Lie
algebras of reductive algebraic groups are simple over $\bbf$ (the
general linear Lie superalgebra, though not simple, is also included), and they admit an even nondegenerate supersymmetric invariant bilinear form in the following sense.
\begin{defn}\label{form}
Let $V=V_{\bar0}\oplus V_{\bar1}$ be a $\mathbb{Z}_2$-graded space and $(\cdot,\cdot)$ be a bilinear form on $V$.
\begin{itemize}
\item[(1)] If $(a,b)=0$ for any $a\in V_{\bar0}, b\in V_{\bar1}$, then $(\cdot,\cdot)$ is called even;
\item[(2)] If $(a,b)=(-1)^{|a||b|}(b,a)$ for any homogeneous elements $a,b\in V$, then $(\cdot,\cdot)$ is called supersymmetric;
\item[(3)] If $([a,b],c)=(a,[b,c])$ for any homogeneous elements $a,b,c\in V$, then $(\cdot,\cdot)$ is called invariant;
\item[(4)] If one can conclude from $(a,V)=0$ that $a=0$, then $(\cdot,\cdot)$ is called non-degenerate.
\end{itemize}
\end{defn}

Note that when $\bbf$ is a field $\bbk$ whose characteristic equals to odd prime $p$, there are restrictions on $p$ as below (cf. \cite[Table 1]{WZ}). Then we have the following list

\begin{center}
\begin{tabular}{ccc}
\hline
 $\frak{g}$ & $\ggg_{\bar 0}$  & the restriction of $p$ when $\bbf=\bbk$\\
\hline
$\frak{gl}(m|n$) &  $\frak{gl}(m)\oplus \frak{gl}(n)$                &$p>2$            \\
$\frak{sl}(m|n)$ &  $\frak{sl}(m)\oplus \frak{sl}(n)\oplus \bbk$    & $p>2, p\nmid (m-n)$   \\
$\frak{osp}(m|n)$ & $\frak{so}(m)\oplus \frak{sp}(n)$                  & $p>2$ \\
$\text{D}(2,1,\bar a)$   & $\frak{sl}(2)\oplus \frak{sl}(2)\oplus \frak{sl}(2)$        & $p>3$   \\
$\text{F}(4)$            & $\frak{sl}(2)\oplus \frak{so}(7)$                  & $p>15$  \\
$\text{G}(3)$            & $\frak{sl}(2)\oplus \text{G}_2$                    & $p>15$     \\
\hline
\end{tabular}
\end{center}

Throughout the paper, we will simply call all $\ggg$ listed above {\sl{``basic Lie superalgebras"}}. When $\bbf$ is the field $\bbk$ of characteristic $p$, we always assume the restriction on $p$ as listed.

\subsection{Algebraic supergroups and restricted Lie superalgebras}\label{2.1}
For a given basic Lie superalgebra listed in \S\ref{basicLiesuper}, there is an algebraic supergroup $G$  satisfying $\Lie(G)=\ggg$ such that
\begin{itemize}
\item[(1)] $G$ has a subgroup scheme $G_\ev$ which is an ordinary connected reductive group with $\Lie(G_\ev)=\ggg_\bz$;
\item[(2)] There is a well-defined action of $G_\ev$ on $\ggg$, reducing to the adjoint action of $\ggg_\bz$.
    \end{itemize}
The above algebraic supergroup can be constructed as a Chevalley supergroup in \cite{FG}. The pair ($G_\ev, \ggg)$ constructed in this way is called a Chevalley super Harish-Chandra pair (cf.  \cite[Theorem 5.35]{FG} and \cite[\S3.3]{FG2}).
Partial results on $G$ and $G_\ev$ can be found in \cite[Ch. 2.2]{Ber}, \cite{FG}, \cite[\S3.3]{FG2} {\sl etc.}. In the present paper, we will call $G_\ev$ the purely even subgroup of $G$. One easily knows that $\ggg_\bbk$ is a restricted Lie superalgebra (cf. \cite[Definition 2.1]{SW} and \cite{SZ}) in the following sense when the ground field is  $\bbk$ of odd prime characteristic $p$.

\begin{defn}\label{restricted}
A Lie superalgebra ${\ggg}_{\bbk}=({\ggg}_{\bbk})_{\bar{0}}\oplus({\ggg}_{\bbk})_{\bar{1}}$ over ${\bbk}$ is called a restricted Lie superalgebra,
if there is a $p$-th power map $({\ggg}_{\bbk})_{\bar{0}}\rightarrow({\ggg}_{\bbk})_{\bar{0}}$, denoted as $(-)^{[p]}$, satisfying

(a) $(kx)^{[p]}=k^px^{[p]}$ for all $k\in{\bbk}$ and $x\in({\ggg}_{\bbk})_{\bar{0}}$;

(b) $[x^{[p]},y]=(\text{ad}x)^p(y)$ for all $x\in({\ggg}_{\bbk})_{\bar{0}}$ and $y\in{\ggg}_{\bbk}$;

(c) $(x+y)^{[p]}=x^{[p]}+y^{[p]}+\sum\limits_{i=1}^{p-1}s_i(x,y)$ for all $x,y\in({\ggg}_{\bbk})_{\bar{0}}$, where $is_i(x,y)$ is the
coefficient of $\lambda^{i-1}$ in $(\text{ad}(\lambda x+y))^{p-1}(x)$.
\end{defn}

 Let $\ggg_\bbk$ be a restricted Lie superalgebra.  For each $x\in (\ggg_\bbk)_\bz$, the element $x^p-x^{[p]}\in U(\ggg_\bbk)$ is central by the definition \ref{restricted}, and all of which generate a central subalgebra of $U(\ggg_\bbk)$.
Let $x_1,\cdots, x_s$ and $y_1,\cdots,y_t$ be the basis of $(\ggg_\bbk)_\bz$ and $(\ggg_\bbk)_\bo$ respectively.
For a given $\chi\in (\ggg_\bbk)_\bz^*$,  let $J_\chi$ be the ideal of the universal enveloping algebra $U(\ggg_\bbk)$ of $\ggg_\bbk$ generated by the even central elements $x^p-x^{[p]}-\chi(x)^p$ for all $x\in (\ggg_\bbk)_\bz$. The quotient algebra $U_\chi(\ggg_\bbk) := U(\ggg_\bbk)\slash J_\chi$ is called the reduced enveloping algebra with $p$-character $\chi$. We often regard $\chi\in \ggg_\bbk^*$ by letting $\chi((\ggg_\bbk)_\bo) = 0$. If $\chi= 0$, then $U_0(\ggg_\bbk)$ is called
the restricted enveloping algebra. It is a direct consequence from the PBW theorem that the superalgebra $U_\chi(\ggg_\bbk)$ is of dimension $p^s 2^t$, and has a basis
$$\{x_1^{a_1}\cdots  x_s^{a_s} y_1^{b_1}\cdots y_t^{b_t}
\mid 0\leq a_i < p; b_j = 0, 1 \mbox{ for  all }i, j\}.$$

\section{Finite $W$-superalgebras over $\bbc$ and their reduction versions modulo prime $p$}\label{FWC}

In this section we will introduce finite $W$-superalgebra $U(\ggg,e)$ over $\mathbb{C}$ associated with a basic superalgebra $\ggg=\ggg_\bz+\ggg_\bo$ and a nilpotent $e\in \ggg_\bz$,  along with some equivalent definitions (the second equivalent definition of finite $W$-superalgebras will be useful in our later arguments).  The super version of Skryabin's equivalence is presented here. Apart from the exploitation of Premet's treatment of finite $W$-algebras \cite[\S2]{P2} in the super case, some new phenomenon has to be dealt, which arises from the appearance of odd parts,  with aid of new techniques significantly different from the finite $W$-algebras. Then the introduction of  $\bbz$-admissible ring $A\subseteq \bbc$ associated with a Chevalley basis of $\ggg$ enables us to define the reduction version $U(\ggg_\bbk, \bar e)$ modulo a prime $p$.

\subsection{Chevalley basis for a basic Lie superalgebra and associated admissible $\bbz$-rings}\label{Chevalleybasis}

Let ${\ggg}$ be a basic Lie superalgebra over $\bbf$. The notation of Chevalley basis is an analogue of a classical theory of complex semisimple Lie algebras by Chevalley, in the super setting. In \cite{FG}, \cite{Gav} and \cite{SW}, the existence of Chevalley basis of basic Lie superalgebras is discussed.

In this section, we will take $\bbf=\bbc$.  Fix $\ggg$ to be a basic Lie superalgebra over $\bbc$ and ${\hhh}$ be a typical Cartan subalgebra of ${\ggg}$.
Let $\Phi$ be a root system of ${\ggg}$ relative to ${\hhh}$ whose simple roots system $\Delta=\{\alpha_1,\cdots,\alpha_l\}$ is distinguished (c.f. \cite[Proposition 1.5]{K2}).  Let $\Phi^+$ be the corresponding positive system in $\Phi$, and put $\Phi^-:=-\,\Phi^+$. Let ${\ggg}=\mathfrak{n}^-\oplus{\hhh}\oplus\mathfrak{n}^+$ be the corresponding triangular decomposition of ${\ggg}$. By \cite[\S3.3]{FG}, we can choose a Chevalley basis $B=\{e_\gamma\mid \gamma\in\Phi\}\cup\{h_\alpha\mid \alpha\in\Delta\}$ of ${\ggg}$ (in the case $D(2,1;a) (a\notin\mathbb{Z}$), one needs to adjust the definition of Chevalley basis by changing $\bbz$ to $\bbz[a]$ in the range of construction constants, see \cite[\S3.1]{Gav}. Here $\bbz[a]$ is the $\bbz$-algebra generated by $a$). Let ${\ggg}_\mathbb{Z}$ denote the Chevalley $\mathbb{Z}$-form in ${\ggg}$ and $U_\mathbb{Z}$ the Kostant $\mathbb{Z}$-form of $U({\ggg})$ associated to $B$. Given a $\mathbb{Z}$-module $V$ and a $\mathbb{Z}$-algebra $A$, we write $V_A:=V\otimes_\mathbb{Z}A$.

Let $G$ be an algebraic supergroup with $\Lie(G)={\ggg}$ as in \S\ref{2.1}, and $(G_\ev,\ggg)$ be a super Harish-Chandra pair. Let $e\in{\ggg}_{\bar0}$ be a nilpotent element. By the Dynkin-Kostant theory (cf. \cite{Ca} or \cite{CM}), $\text{ad}\,G_{\text{ev}}.e$ interacts with $({\ggg}_\mathbb{Z})_{\bar{0}}$ nonempty. Without loss of generality, we can assume that the nilpotent element $e$ is in $({\ggg}_\mathbb{Z})_{\bar{0}}$. Then by the same discussion as \cite[\S 4.2]{P2}, one can find $f,h\in({\ggg}_\mathbb{Q})_{\bar{0}}$ such that $(e,h,f)$ is an $\mathfrak{sl}_2$-triple in ${\ggg}$.

\begin{prop}\label{invariant bilinear}
Let ${\ggg}$ be a basic Lie superalgebra (excluding type $D(2,1;a) (a\notin\mathbb{Z}$)) over $\mathbb{C}$. Then there exists an even non-degenerate supersymmetric invariant bilinear form $(\cdot,\cdot)$ on ${\ggg}$, under which the Chevalley basis of ${\ggg}$ takes value in $\mathbb{Q}$.
\end{prop}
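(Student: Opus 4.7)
The plan is to verify both claims case by case along the list of basic Lie superalgebras in Section~\ref{basicLiesuper}, and then check the rationality of the form on the Chevalley basis $B$ of \cite{FG,Gav} in a uniform manner. The first structural observation I would record is that any even invariant bilinear form on $\ggg$ automatically makes the root space decomposition orthogonal, i.e.\ $(e_\alpha,e_\beta)=0$ whenever $\alpha+\beta\neq 0$, and restricts to a symmetric bilinear form on $\hhh$ determined by its values on a basis of coroots. Hence the only data to control are (i) the restriction of $(\cdot,\cdot)$ to $\hhh$, and (ii) the scalars $(e_\alpha,e_{-\alpha})$ for $\alpha\in\Phi$; the latter are forced by invariance together with the Chevalley relation expressing $[e_\alpha,e_{-\alpha}]$ as an integer multiple of $h_\alpha$.

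For the classical matrix types $\mathfrak{gl}(m|n)$, $\mathfrak{sl}(m|n)$, and $\mathfrak{osp}(m|n)$, I would take $(x,y):=\mathrm{str}(xy)$ (for $\mathfrak{osp}$, via its embedding in the corresponding $\mathfrak{gl}$). Evenness, supersymmetry, and invariance are standard properties of the supertrace, and non-degeneracy on the simple (or nearly simple) quotients in the list is classical. Because $B$ in these types can be chosen to consist of matrix units with entries in $\{0,\pm1\}$ (together with certain signed sums for $\mathfrak{osp}$), every pairing $(x,y)$ with $x,y\in B$ evaluates into $\bbz\subseteq\bbq$, which settles these cases.

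For the remaining exceptional types $D(2,1;a)$ with $a\in\bbz$, $F(4)$, and $G(3)$, the existence of a distinguished even non-degenerate supersymmetric invariant bilinear form is well-known from \cite{K,K2}, where it is normalized so that $(h_\alpha,h_\alpha)\in\bbq$ for every simple root $\alpha$. Combined with (i) and (ii), every pairing $(x,y)$ with $x,y\in B$ is then visibly rational, because in these exceptional cases the Chevalley structure constants of \cite{FG,Gav} are integral and each $h_\alpha$ lies in $\hhh_{\bbz}$. The main obstacle, and the reason for the exclusion of $D(2,1;a)$ with $a\notin\bbz$, is precisely that in that family the invariant form on the third $\mathfrak{sl}(2)$-summand of $\ggg_\bz$ is rescaled by $a$, so the corresponding values on the Chevalley elements leave $\bbq$; once $a\in\bbz$, this rescaling is harmless and the verification proceeds exactly as for the other exceptional cases. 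The only delicate bookkeeping I anticipate is keeping track of the normalizations chosen for $(\cdot,\cdot)$ on the different simple summands of $\ggg_\bz$ in the exceptional cases, so that the rationality in (i) is preserved simultaneously for all simple roots.
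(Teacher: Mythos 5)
Your proposal is correct and follows essentially the same route as the paper: the paper's own proof simply fixes the Fioresi--Gavarini (resp.\ Gavarini) Chevalley basis and asserts that the statements can be certified case by case, omitting the computation. Your sketch (supertrace form for the matrix types, the Kac form with a rational normalization on the Cartan for the exceptional types, and propagation of rationality from $(h_\alpha,h_\beta)$ and $(e_\alpha,e_{-\alpha})$ via invariance and integral structure constants) is a reasonable way to carry out exactly that omitted verification.
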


\begin{proof} 
 For each case listed in \S\ref{basicLiesuper}, a Chevalley basis of ${\ggg}$ (excluding the case $D(2,1;a) (a\notin\mathbb{Z}$)) was constructed by R. Fioresi and F. Gavarini in \cite[\S3.3]{FG} explicitly (the orthogonal-symplectic case was first introduced  in \cite{SW}). We will choose these vectors as a basis of ${\ggg}$. For each case one can easily certify the statements in the proposition case by case. We omit the detailed computation here.
\end{proof}

\begin{rem} \label{mathbbq}
It follows from Proposition \ref{invariant bilinear} and the discussion earlier that $(e,f)\in\mathbb{Q}$. \cite[Proposition 2.5.5(c)]{K} shows that the non-degenerate, supersymmetric and invariant bilinear form on any basic Lie superalgebra is uniquely determined up to a constant factor. Therefore, we can assume $(e,f)=1$ and $(\cdot,\cdot)$ takes value in $\mathbb{Q}$ under the Chevalley basis of ${\ggg}$ in \cite{FG}. Define $\chi\in{\ggg}^{*}$ by letting $\chi(x)=(e,x)$ for $x\in{\ggg}$, then we have $\chi({\ggg}_{\bar{1}})=0$.
\end{rem}

\begin{rem}\label{fractZa} In the case $D(2,1,a)$ with $a\notin \bbz$, there is a set of adjusted ``Chevalley basis" such that $D(2,1,a)$ has a $\bbz[a]$-lattice spanned by ``Chevalley basis" (cf. \cite[\S3.1]{Gav}). Thus we can claim in this case that $\ggg$ admits an even non-degenerate supersymmetric invariant bilinear form $(\cdot,\cdot)$ such that the adjusted ``Chevalley basis" takes value in the fraction algebra of $\bbz[a]$.
\end{rem}

Following Premet's notion (cf. \cite{P7}), we introduce admissible $\bbz$-rings associated with an (adjusted) Chevalley basis discussed previously, which will be critically useful for our definition of finite $W$-superalgebras over $\bbk$.

\begin{defn}\label{admissible}
We call a commutative (in the usual sense, not super) ring $A$ admissible if $A$ is a finitely generated $\mathbb{Z}$-subalgebra of $\mathbb{C}$, $(e,f)\in A^{\times}(=A\backslash \{0\})$, and all bad primes of the root system of ${\ggg}$ and the determinant of the Gram matrix of ($\cdot,\cdot$) relative to a Chevalley basis of ${\ggg}$ are invertible in $A$.
\end{defn}

For example, one can take $A=\mathbb{Z}[\frac{1}{N!}]$ for any sufficiently large integer $N$ for all the cases excluding type $D(2,1;a)$($a\notin\mathbb{Z}$), then $A$ is an admissible ring.

It is clear by the definition that every admissible ring is a Noetherian domain. Given a finitely generated $\mathbb{Z}$-subalgebra $A$ of $\mathbb{C}$, it is well known that for every $\mathfrak{P}\in\text{Specm}\,A$ the residue field $A/\mathfrak{P}$ is isomorphic to $\mathbb{F}_{q}$, where $q$ is a $p$-power depending on $\mathfrak{P}$. We denote by $\Pi(A)$ the set of all primes $p\in\mathbb{N}$ that occur in this way.

Since the choice of $A$ does not depend on the super structure of ${\ggg}$, it follows from the arguments in the proof of \cite[Lemma 4.4]{P7} that the set $\Pi(A)$ contains almost all primes in $\mathbb{N}$.  Let $p$ be a prime with $p\gg N$, i.e. $p\gg0$, then $p\in\Pi(A)$ for $A=\mathbb{Z}[\frac{1}{N!}]$  associated with any sufficiently large integer $N$ and  any cases except type $D(2,1;a)$($a\notin\mathbb{Z}$).
\begin{rem}\label{except}
In the case $D(2,1;a)$ ($a\notin\bbz$) with $a$ being an algebraic number, by Remark \ref{fractZa} we can enlarge the admissible ring such that the fraction algebra of $\bbz[a]$ is contained in $A$. Therefore, the basic Lie superalgebras ${\ggg}$ over $\bbc$ will be referred to all types in the article, except $D(2,1;a)$ with $a$ not being an algebraic number, such that the associated finite $W$-superalgebras are studied.
\end{rem}

\subsection{Dynkin gradings and finite $W$-superalgebras over $\mathbb{C}$}\label{defw}
 Return to the assumptions and notations in the paragraph prior to Proposition \ref{invariant bilinear}. Specially, fix a basic Lie superalgebra $\ggg$ over $\bbc$ with an even non-degenerate supersymmetric invariant bilinear form $(\cdot,\cdot)$. Let $e\in (\ggg_\bbz)_\bz$ be a given nilpotent element, then by Jacobson-Morozov theorem there is an $\frak{sl}_2$-triple $\{e,f,h\}$ with $f,h\in (\ggg_\bbq)_\bz$ (cf. \cite{CM}). Recall that a $\mathbb{Z}$-grading on $\ggg$ is called a {\sl Dynkin grading} if it is defined by $\text{ad}\,h$. Let ${\ggg}(i)=\{x\in{\ggg}\mid [h,x]=ix\}$ be the decomposition of ${\ggg}$ under the Dynkin grading, then ${\ggg}=\bigoplus\limits_{i\in\mathbb{Z}}{\ggg}(i).$ By the $\mathfrak{sl}_2$-theory, all subspaces ${\ggg}(i)$ defined are over $\mathbb{Q}$. Also, $e\in{\ggg}(2)_{\bar{0}}$ and $f\in{\ggg}(-2)_{\bar{0}}$. By \cite[Lemma 2.7(i)]{H} we know that if the integers $i$ and $j$ satisfy $i+j\neq0$, then $({\ggg}(i),{\ggg}(j))=0$. Moreover, there exist non-degenerate  symplectic and symmetric bilinear forms $\langle\cdot,\cdot\rangle$ on the $\mathbb{Z}_2$-graded subspaces ${\ggg}(-1)_{\bar{0}}$ and ${\ggg}(-1)_{\bar{1}}$ respectively,
  given by $$\langle x,y\rangle:=(e,[x,y])=\chi([x,y])$$ for all $x,y\in{\ggg}(-1)_{\bar0}~(\text{resp.}\,x,y\in{\ggg}(-1)_{\bar1})$.

It follows from \cite[\S4.1]{WZ} that dim\,${\ggg}(-1)_{\bar{0}}$ is even. Take ${\ggg}(-1)_{\bar{0}}^{\prime}\subseteq{\ggg}(-1)_{\bar{0}}$ to be a maximal isotropic subspace with respect to $\langle\cdot,\cdot\rangle$, then dim\,${\ggg}(-1)_{\bar{0}}^{\prime}=\frac{\text{dim}\,{\ggg}(-1)_{\bar{0}}}{2}
:=s$.
Let $u_{s+1},\cdots,u_{2s}$ be a basis of ${\ggg}(-1)_{\bar{0}}^{\prime}$, then we can choose a basis $u_1,\cdots,u_s$ of ${\ggg}(-1)_{\bar{0}}\cap({\ggg}(-1)_{\bar{0}}^{\prime})^{\bot }$ (where $({\ggg}(-1)_{\bar{0}}^{\prime})^{\bot }$ denotes the subspace of ${\ggg}(-1)_{\bar{0}}$ which is orthogonal to ${\ggg}(-1)_{\bar{0}}^{\prime}$ with respect to $\langle\cdot,\cdot\rangle$) such that $u_1,\cdots,u_s,u_{s+1},\cdots,u_{2s}$ is a basis of ${\ggg}(-1)_{\bar{0}}$ under which the symplectic form $\langle\cdot,\cdot\rangle$ has matrix form
\newcommand*{\adots}{\mathinner{\mkern2mu\raisebox{0.1em}{.}
   \mkern2mu\raisebox{0.4em}{.}\mkern2mu\raisebox{0.7em}{.}\mkern1mu}}
\[\left(
\begin{array}{llllll}
&&&&&-1\\
&&&&\adots&\\
&&&-1&&\\
&&1&&&\\
&\adots&&&&\\
1&&&&&
\end{array}
\right),
\]
i.e. for any $1\leqslant i\leqslant 2s$, if we define \[i^*=\left\{\begin{array}{ll}-1&\text{if}~1\leqslant i\leqslant s;\\ 1&\text{if}~s+1\leqslant i\leqslant 2s,\end{array}\right.\] then $\langle u_i, u_j\rangle =i^*\delta_{i+j,2s+1}$ for $1\leqslant i,j\leqslant 2s$, where $\delta_{i,j}$ is the Kronecker symbol.

{\sl{Significant difference of the theory of finite $W$-superalgebras from that of finite $W$-algebras is primarily resulted from the parity of $\dim\ggg(-1)_\bo$.}}
 Let us look at some beginning things.
We set $\text{dim}\,{\ggg}(-1)_{\bar1}=r$. Then we can choose a basis $v_1,\cdots,v_r$ of ${\ggg}(-1)_{\bar{1}}$ on which the symmetric form $\langle\cdot,\cdot\rangle$ has matrix form
\[\left(
\begin{array}{lll}
&&1\\
&\adots&\\
1&&
\end{array}
\right),
\]
i.e. for any $1\leqslant i,j\leqslant r$, $\langle v_i,v_j\rangle=\delta_{i+j,r+1}$.

Since the bilinear form $\langle\cdot,\cdot\rangle$ on ${\ggg}(-1)_{\bar{1}}$ is symmetric, the dimension of ${\ggg}(-1)_{\bar{1}}$ is not necessary an even number. If $r$ is even, then take ${\ggg}(-1)_{\bar{1}}^{\prime}\subseteq{\ggg}(-1)_{\bar{1}}$ to  be the subspace spanned by $v_{\frac{r}{2}+1},\cdots,v_r$. If $r$ is odd, then take ${\ggg}(-1)_{\bar{1}}^{\prime}\subseteq{\ggg}(-1)_{\bar{1}}$ to be the subspace spanned by $v_{\frac{r+3}{2}},\cdots,v_r$. Set ${\ggg}(-1)^{\prime}={\ggg}(-1)'_{\bar{0}}\oplus{\ggg}(-1)'_{\bar{1}}$ and introduce the subalgebras
$${\mmm}=\bigoplus_{i\leqslant -2}{\ggg}(i)\oplus{\ggg}(-1)^{\prime},\qquad {\ppp}=\bigoplus_{i\geqslant 0}{\ggg}(i),$$
\[{\mmm}^{\prime}=\left\{\begin{array}{ll}{\mmm}&\text{if}~r~\text{is even;}\\
{\mmm}\oplus \mathbb{C}v_{\frac{r+1}{2}}&\text{if}~r~\text{is odd.}\end{array}\right.\]
For any real number $a\in\mathbb{R}$, let $\lceil a\rceil$ denote the largest integer lower bound of $a$, and $\lfloor a\rfloor$ the least integer upper bound of $a$. In particular, $\lceil a\rceil=\lfloor a\rfloor=a$ when $a\in\mathbb{Z}$. Clearly,  $u_1,\cdots,u_s\in({\ggg}(-1)_{\bar{0}}^{\prime})^{\bot }$ and $ v_1,\cdots,v_{\lfloor\frac{r}{2}\rfloor}\in({\ggg}(-1)_{\bar{1}}^{\prime})^{\bot }$. Here ${\bot}$ is respect to the bilinear form $\langle\cdot,\cdot\rangle$ on the space  ${\ggg}(-1)$.

\begin{conventions}\label{firstconventions}
From now on, we will reset the detecting dimension in connection to the detecting parity
$$\sfr:=\dim\ggg(-1)_{\bar{1}}$$
 for  stressing its distinguished role in the  sequent arguments.   We will  denote $\lfloor\frac{\sfr}{2}\rfloor$ by $t$ hereafter for convenience which actually equals to the dimension of $({\ggg}(-1)_{\bar{1}}^{\prime})^{\bot }$.
\end{conventions}

\begin{rem}\label{centralizer}
Write ${\ggg}^e$ for the centralizer of $e$ in ${\ggg}$, and ${\ggg}^f$ the centralizer of $f$ in ${\ggg}$. For any $i\in\mathbb{Z}_2$, denote $d_i:=\text{dim}\,{\ggg}_i-\text{dim}\,{\ggg}^e_i$. It follows from \cite[Theorem 4.3]{WZ} that
$$\text{\underline{dim}}\,{\ggg}-\text{\underline{dim}}\,{\ggg}^e=\sum\limits_{k\geqslant2}
2\text{\underline{dim}}\,{\ggg}(-k)+\text{\underline{dim}}\,{\ggg}(-1).$$

In particular, $\text{dim}\,{\ggg}(-1)_{\bar1}$ and $d_1$ always have the same parity. It follows from the definition of ${\mmm}$ that either (1) $(\frac{d_0}{2},\frac{d_1}{2})=\text{\underline{dim}}\,{\mmm}$ when $\text{dim}\,{\ggg}(-1)_{\bar1}$ (or $d_1$, equivalently) is even, or (2) $(\frac{d_0}{2},\frac{d_1-1}{2})=\text{\underline{dim}}\,{\mmm}$ when $\text{dim}\,{\ggg}(-1)_{\bar1}$ (or $d_1$) is odd.
\end{rem}
By the same discussion as \cite[\S2.1]{P7}, we can assume ${\ggg}_A=\bigoplus\limits_{i\in\mathbb{Z}}{\ggg}_A(i)$ after enlarging $A$ if necessary, and each ${\ggg}_A(i):={\ggg}_A\cap{\ggg}(i)$ is freely generated over $A$ by a basis of the vector space ${\ggg}(i)$. Then $\{u_1,\cdots,u_{2s}\}$ and $\{v_1,\cdots,v_\sfr\}$ are free basis of $A$-module ${\ggg}_A(-1)_{\bar0}$ and ${\ggg}_A(-1)_{\bar1}$, respectively. By the assumptions on $A$ one can obtain that
${\mmm}_A:={\ggg}_A\cap{\mmm}$, ${\mmm}^{\prime}_A:={\ggg}_A\cap{\mmm}^{\prime}$ and ${\ppp}_A:={\ggg}_A\cap{\ppp}$ are free $A$-modules and direct summands of ${\ggg}_A$. More precisely,
$${\mmm}_A={\ggg}_A(-1)^{\prime}\oplus\bigoplus\limits_{i\leqslant -2}{\ggg}_A(i),~ \text{where}
~{\ggg}_A(-1)^{\prime}={\ggg}_A\cap{\ggg}(-1)^{\prime},\quad {\ppp}_A=\bigoplus_{i\geqslant 0}{\ggg}_A(i),$$
\[{\mmm}^{\prime}_A=\left\{\begin{array}{ll}{\mmm}_A&\text{if}~\sfr~\text{is even;}\\{\mmm}_A\oplus A
v_{\frac{\sfr+1}{2}}&\text{if}~\sfr~\text{is odd.}\end{array}\right.\]

Let ${\ggg}^*$ be the $\mathbb{C}$-module dual to ${\ggg}$ and let ${\mmm}^\perp$ denote the set of all linear functions on ${\ggg}$ vanishing on ${\mmm}$. By the discussion at the beginning of \S\ref{defw} we have  $e\in({\ggg}_\mathbb{Z})_{\bar{0}}, f\in({\ggg}_\mathbb{Q})_{\bar{0}}$. Hence one can assume $e,f\in({\ggg}_A)_{\bar0}$ after enlarging $A$ possibly (for example, if the admissible ring is chosen as $\mathbb{Z}[\frac{1}{N!}]$, then one can just select a sufficiently large positive integer $N\gg0$) and that $[e,{\ggg}_A(i)]$ and $[f,{\ggg}_A(i)]$ are direct summands of ${\ggg}_A(i+2)$ and ${\ggg}_A(i-2)$, respectively. By the $\mathfrak{sl}_2$-theory we have ${\ggg}_A(i+2)=[e,{\ggg}_A(i)]$ for each $i\geqslant -1$.

Since the vectors in ${\ggg}$ can be identified with their dual vectors in ${\ggg}^*$ by the non-degenerate bilinear form $(\cdot,\cdot)$, we will identify the functions on ${\ggg}$ naturally with the vectors in ${\ggg}$.

\begin{lemma}\label{m'}
For the subspace ${\mmm}^\perp$ of Lie superalgebra ${\ggg}$, we have
$${\mmm}^\perp=[{\mmm}',e]\oplus{\ggg}^f.$$
\end{lemma}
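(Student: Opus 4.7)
The plan is to decompose both sides by the Dynkin grading and compare graded components. First, I identify ${\mmm}^\perp$ as a subspace of ${\ggg}$ via the non-degenerate form $(\cdot,\cdot)$. Using $({\ggg}(i),{\ggg}(j))=0$ for $i+j\neq 0$, that ${\ggg}(-j)\subseteq{\mmm}$ for all $j\geq 2$, and that ${\mmm}\cap{\ggg}(-1)={\ggg}(-1)'$, one obtains
\[ {\mmm}^\perp \;=\; \Bigl(\bigoplus_{j\leq 0}{\ggg}(j)\Bigr)\,\oplus\, W_1,\qquad W_1:=\{x\in{\ggg}(1)\mid (x,{\ggg}(-1)')=0\}. \]

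Next I decompose $[{\mmm}',e]$ by Dynkin degree. Since ${\mmm}'\subseteq\bigoplus_{i\leq -1}{\ggg}(i)$ and $\text{ad}\,e$ raises degree by $2$, the image lies in degrees $\leq 1$; its degree-$j$ component equals $[{\ggg}(j-2),e]$ for $j\leq 0$ (as ${\mmm}'\cap{\ggg}(j-2)={\ggg}(j-2)$ whenever $j-2\leq -2$) and equals $[e,{\mmm}'\cap{\ggg}(-1)]$ for $j=1$. Viewing ${\ggg}$ as an $\mathfrak{sl}_2$-module via $\text{ad}$ for the triple $(e,h,f)$, standard $\mathfrak{sl}_2$-representation theory gives ${\ggg}^f\subseteq\bigoplus_{j\leq 0}{\ggg}(j)$ and the direct sum decomposition
\[ {\ggg}(j)\;=\;{\ggg}^f(j)\;\oplus\;[{\ggg}(j-2),e]\qquad(j\leq 0), \]
so the non-positive-degree parts of ${\mmm}^\perp$ match ${\ggg}^f$ plus the corresponding graded pieces of $[{\mmm}',e]$ exactly, and disjointly from $W_1$ since ${\ggg}^f\cap{\ggg}(1)=0$.

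For the degree-$1$ contribution, the map $\text{ad}\,e:{\ggg}(-1)\to{\ggg}(1)$ is an $\mathfrak{sl}_2$-isomorphism, hence $[e,{\mmm}'\cap{\ggg}(-1)]$ has dimension $\dim({\mmm}'\cap{\ggg}(-1))$. A short count using $\dim{\ggg}(-1)'=s+\lfloor\sfr/2\rfloor$ checks that this matches $\dim W_1=\dim{\ggg}(1)-\dim{\ggg}(-1)'$ in both parities of $\sfr$. The inclusion $[e,{\mmm}'\cap{\ggg}(-1)]\subseteq W_1$ then follows from the identity $([e,x],y)=\chi([x,y])=\langle x,y\rangle$ for $x,y\in{\ggg}(-1)$, combined with the isotropy of ${\ggg}(-1)'$ for $\langle\cdot,\cdot\rangle$; dimension match upgrades inclusion to equality. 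Gluing the graded pieces completes the identification ${\mmm}^\perp={\ggg}^f\oplus[{\mmm}',e]$.

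The main obstacle is the odd-$\sfr$ case. There ${\mmm}'$ contains the extra odd vector $v_{\frac{\sfr+1}{2}}$ whose self-pairing $\langle v_{\frac{\sfr+1}{2}},v_{\frac{\sfr+1}{2}}\rangle=1$ is nonzero, so ${\mmm}'$ itself fails to be isotropic and one cannot conclude $[e,v_{\frac{\sfr+1}{2}}]\in W_1$ from a blanket isotropy argument. One must verify directly that $v_{\frac{\sfr+1}{2}}$ is $\langle\cdot,\cdot\rangle$-orthogonal to every element of ${\ggg}(-1)'$: on ${\ggg}(-1)'_{\bar 1}=\mathrm{span}(v_{\frac{\sfr+3}{2}},\dots,v_{\sfr})$ the pairing $\langle v_{\frac{\sfr+1}{2}},v_j\rangle=\delta_{j+\frac{\sfr+1}{2},\sfr+1}$ vanishes because $j\geq\frac{\sfr+3}{2}$, and on ${\ggg}(-1)'_{\bar 0}$ it vanishes by evenness of $\langle\cdot,\cdot\rangle$. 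This is precisely where the detecting-parity dichotomy advertised in the introduction first manifests.
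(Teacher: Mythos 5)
Your proof is correct and follows essentially the same route as the paper's: both rest on the $\mathfrak{sl}_2$-decomposition ${\ggg}(j)={\ggg}^f(j)\oplus[e,{\ggg}(j-2)]$, the invariance identity $([e,x],y)=\chi([x,y])$ together with the isotropy of ${\mmm}'\cap{\ggg}(-1)$ against ${\ggg}(-1)'$, and a dimension count — you merely organize this degree-by-degree instead of as two global containments plus a single dimension comparison. Your explicit check that $v_{\frac{\sfr+1}{2}}$ is $\langle\cdot,\cdot\rangle$-orthogonal to ${\ggg}(-1)'$ is precisely the ``minor modification'' the paper's sketch leaves implicit in its step $\chi([{\mmm}',{\mmm}])=0$.
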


\begin{proof}
When $\text{dim}\,{\ggg}(-1)_{\bar1}$ is even, i.e. ${\mmm}'={\mmm}$, the proof is the same as the Lie algebra case (see e.g. \cite[Lemma 26]{W}). When $\text{dim}\,{\ggg}(-1)_{\bar1}$ is odd, i.e. ${\mmm}'\neq{\mmm}$, minor modifications are needed for the proof. We will just sketch the proof as follows:

(1) ${\ggg}^f\subseteq{\mmm}^\perp$. This follows from ${\ggg}^f\subseteq\bigoplus\limits_{i\leqslant 0}{\ggg}(i)\subseteq{\mmm}^\perp$.

(2) $[{\mmm}',e]\subseteq{\mmm}^\perp$. This can be seen by $([e,{\mmm}'],{\mmm})=(e,[{\mmm}',{\mmm}])=\chi([{\mmm}',{\mmm}])=0$.

(3) $\text{Im}(\text{ad}e)\cap{\ggg}^f=0$. This follows from the $\mathfrak{sl}_2$-representation theory.

(4) $\underline{\text{dim}}\,{\mmm}^\perp=\underline{\text{dim}}~{\mmm}'+\underline{\text{dim}}\,{\ggg}(0)+\underline{\text{dim}}~{\ggg}(-1)=\underline{\text{dim}}~[{\mmm}',e]+\underline{\text{dim}}~{\ggg}^f$. This follows by the bijection ${\mmm}'\rightarrow [{\mmm}',e], x\mapsto [x,e]$, by (2), and the $\mathfrak{sl}_2$-representation theory.
\end{proof}

\begin{lemma}\label{p}
For the subalgebra ${\ppp}$ of Lie superalgebra ${\ggg}$, we have
$${\ppp}=\bigoplus\limits_{j\geqslant 2}[f,{\ggg}(j)]\oplus{\ggg}^e.$$
\end{lemma}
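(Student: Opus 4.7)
The plan is to reduce the statement to a standard fact about finite-dimensional $\mathfrak{sl}_2$-modules, exactly as in the Lie algebra case, noting that no new super phenomenon appears because the $\mathfrak{sl}_2$-triple $\{e,h,f\}$ lies entirely in $\ggg_{\bar 0}$ and the adjoint action respects the $\mathbb{Z}_2$-grading. Thus $\ggg$, with $h$ acting semisimply, becomes an $\mathfrak{sl}_2$-module for which the $h$-weight decomposition is precisely the Dynkin grading $\ggg=\bigoplus_{i\in\bbz}\ggg(i)$, and each $\ggg(i)$ is itself a $\bbz_2$-graded subspace.

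First I would verify the easy inclusion $\ggg^e\subseteq\ppp$. Since $\text{ad}\,e\colon\ggg(i)\to\ggg(i+2)$ is injective for $i\leqslant -1$ by the $\mathfrak{sl}_2$-representation theory (low-weight spaces inject into higher ones under the raising operator), we obtain $\ggg^e\cap\ggg(i)=0$ for all $i<0$. Therefore $\ggg^e=\bigoplus_{i\geqslant 0}(\ggg^e\cap\ggg(i))\subseteq\ppp$. The other inclusion $\bigoplus_{j\geqslant 2}[f,\ggg(j)]\subseteq\ppp$ is immediate, since $[f,\ggg(j)]\subseteq\ggg(j-2)\subseteq\ppp$ whenever $j\geqslant 2$.

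Next I would establish the decomposition graded-piece by graded-piece. It suffices to prove, for every fixed $i\geqslant 0$, that
\[
\ggg(i)=[f,\ggg(i+2)]\oplus\bigl(\ggg^e\cap\ggg(i)\bigr).
\]
This is the standard statement that in any finite-dimensional $\mathfrak{sl}_2$-module $V$, each non-negative weight space $V_i$ is the direct sum of the image of the lowering operator $[f,\cdot]\colon V_{i+2}\to V_i$ and the highest-weight vectors $\ker[e,\cdot]\cap V_i$. Indeed, by complete reducibility and the classification of irreducible $\mathfrak{sl}_2$-modules $V(n)$, a weight-$i$ vector in $V(n)$ with $0\leqslant i\leqslant n$ is of the form $[f,\cdot]$ applied to the weight-$(i+2)$ vector if $i<n$, and spans $\ker[e,\cdot]\cap V(n)_i$ if $i=n$. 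Summing over the irreducible summands of $\ggg$ yields the desired graded decomposition; the argument passes through unchanged on each parity piece.

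Finally I would sum over $i\geqslant 0$ to conclude: the images $[f,\ggg(j)]$ for varying $j\geqslant 2$ sit in distinct graded pieces $\ggg(j-2)$, so they are automatically in direct sum with each other, and within each $\ggg(i)$ they meet $\ggg^e\cap\ggg(i)$ trivially by the previous paragraph. Combined with $\ggg^e=\bigoplus_{i\geqslant 0}(\ggg^e\cap\ggg(i))$, this gives $\ppp=\bigoplus_{j\geqslant 2}[f,\ggg(j)]\oplus\ggg^e$. I do not foresee a real obstacle: the only point to be careful about is that all the relevant maps preserve the $\bbz_2$-grading, so the $\mathfrak{sl}_2$-theoretic input applies verbatim in the super setting.
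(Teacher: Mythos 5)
Your proof is correct and follows exactly the route the paper intends: the paper simply cites the Lie algebra case (\cite[Lemma 2.2]{BGK}), whose argument is the $\mathfrak{sl}_2$-weight-space decomposition $\ggg(i)=[f,\ggg(i+2)]\oplus(\ggg^e\cap\ggg(i))$ for $i\geqslant 0$ that you spell out. Your added remark that the $\mathfrak{sl}_2$-triple is purely even, so the argument applies verbatim on each parity component, is precisely why the paper regards the super case as identical.
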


\begin{proof}
The proof is straightforward and is the same as the Lie algebra case (see e.g. \cite[Lemma 2.2]{BGK}).
\end{proof}

By Lemma~\ref{p} and the assumptions on $A$, we can choose a basis $x_1,\cdots,x_l,x_{l+1},$\\$\cdots,x_m\in{\ppp}
_{\bar{0}}, y_1,\cdots,y_q,y_{q+1},$ $\cdots,y_n\in{\ppp}_{\bar{1}}$ of ${\ppp}=\bigoplus\limits_{i\geqslant 0}{\ggg}(i)$ such that

(a) $x_i\in{\ggg}(k_i)_{\bar{0}}, y_j\in{\ggg}(k'_j)_{\bar{1}}$, where $k_i,k'_j\in\mathbb{Z}_+$;

(b) $x_1,\cdots,x_l$ is a basis of ${\ggg}^e_{\bar{0}}$ and $y_1,\cdots,y_q$ is a basis of ${\ggg}^e_{\bar{1}}$;

(c) $x_{l+1},\cdots,x_m\in[f,{\ggg}_{\bar{0}}]$ and $ y_{q+1},\cdots,y_n\in[f,{\ggg}_{\bar{1}}]$;\\
then the corresponding elements of (a), (b) and (c) in $A$ form a basis of the free $A$-module ${\ppp}_A=\bigoplus\limits_{i\geqslant 0}{\ggg}_A(i)$ after enlarging admissible ring $A$ if needed. Summing up the arguments before, we have that there is a co-basis of $\ggg$ modulo $\mmm$
$$\{x_1,\cdots,x_m; u_1,\cdots,u_s; y_1,\cdots,y_n; v_1,\cdots,v_t\}$$
 with  $x_1,\cdots,x_m; u_1,\cdots,u_s\in\ggg_\bz$ and $ y_1,\cdots,y_n; v_1,\cdots,v_t\in\ggg_\bo$.

\begin{defn}\label{Gelfand-Graev}
Define a generalized Gelfand-Graev ${\ggg}$-module associated to $\chi$ by $$Q_\chi=U({\ggg})\otimes_{U({\mmm})}\mathbb{C}_\chi,$$ where $\mathbb{C}_\chi=\mathbb{C}1_\chi$ is a one-dimensional ${\mmm}$-module such that $x.1_\chi=\chi(x)1_\chi$ for  a given nonzero eigenvector $1_\chi\in\bbc_\chi$ and all $x\in{\mmm}$.
\end{defn}

For $k\in\mathbb{Z}_+$, define
 \begin{equation*}
 \begin{array}{llllll}
 \mathbb{Z}_+^k&:=&\{(i_1,\cdots,i_k)\mid i_j\in\mathbb{Z}_+,i_j\geqslant 0\},&
 \Lambda'_k&:=&\{(i_1,\cdots,i_k)\mid i_j\in\{0,1\}\}
 \end{array}
 \end{equation*}with $1\leqslant j\leqslant k$. Given $(\mathbf{a},\mathbf{b},\mathbf{c},\mathbf{d})\in\mathbb{Z}^m_+\times\Lambda'_n\times\mathbb{Z}^s_+\times\Lambda'_t$, let $x^\mathbf{a}y^\mathbf{b}u^\mathbf{c}v^\mathbf{d}$ denote the monomial element $$x_1^{a_1}\cdots x_m^{a_m}y_1^{b_1}\cdots y_n^{b_n}u_1^{c_1}\cdots u_s^{c_s}v_1^{d_1}\cdots v_t^{d_t}$$ in $U({\ggg})$.

\begin{defn}\label{W-C}
Define the finite $W$-superalgebra over $\mathbb{C}$ by $$U({\ggg},e):=(\text{End}_{\ggg}Q_{\chi})^{\text{op}},$$
where $(\text{End}_{\ggg}Q_{\chi})^{\text{op}}$ denotes the opposite algebra of the endomorphism algebra of ${\ggg}$-module $Q_{\chi}$.
\end{defn}

It can be easily concluded by the definition that if two nilpotent elements $E,E'\in{\ggg}_{\bar0}$ are conjugate under the action of $\text{Ad}\,G_{\text{ev}}$, then there is an isomorphism between finite $W$-superalgebras $U({\ggg},E)$ and $U({\ggg},E')$. Therefore, the construction of finite $W$-superalgebras only depends on the adjoint orbit Ad\,$G_{\text{ev}}.e $ of $e$ up to isomorphism.

Let $N_\chi$ denote the $\mathbb{Z}_2$-graded ideal of codimension one in $U({\mmm})$ generated by all $x-\chi(x)$ with $x\in{\mmm}$. Then $Q_\chi\cong U({\ggg})/U({\ggg})N_\chi$ as ${\ggg}$-modules. By construction, the left ideal $I_\chi:=U({\ggg})N_\chi$ of $U({\ggg})$ is a $(U({\ggg}),U({\mmm}))$-bimodule. The fixed point space $(U({\ggg})/I_\chi)^{\text{ad}{\mmm}}$ carries a natural algebra structure given by $$(x+I_\chi)\cdot(y+I_\chi):=(xy+I_\chi)$$ for all $x,y\in U({\ggg})$.

\begin{theorem}\label{W-C2}
There is an isomorphism between $\mathbb{C}$-algebras
\[\begin{array}{lcll}
\phi:&(\text{End}_{\ggg}Q_{\chi})^{\text{op}}&\rightarrow&Q_{\chi}^{\text{ad}\,{\mmm}}\\ &\Theta&\mapsto&\Theta(1_\chi),
\end{array}
\]
where $Q_{\chi}^{\text{ad}\,{\mmm}}$ is the invariant subalgebra of $U({\ggg})/I_\chi\cong Q_{\chi}$ under the adjoint action of ${\mmm}$.
\end{theorem}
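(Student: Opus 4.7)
The plan is to follow the standard Frobenius-reciprocity pattern for the induced module $Q_\chi=U(\ggg)\otimes_{U(\mmm)}\bbc_\chi$, with the $\bbz_2$-grading intruding only in the verification of one key identity. The starting observation is that in $U(\ggg)/I_\chi$ one has
\[(x-\chi(x))y+I_\chi\;=\;[x,y]+I_\chi\qquad\text{for every }x\in\mmm,\ y\in U(\ggg).\]
This is checked case by case: if $x\in\mmm_\bz$ then $yx=y(x-\chi(x))+\chi(x)y\in I_\chi+\chi(x)y$ and the identity is immediate, while if $x\in\mmm_\bo$ then $\chi(x)=0$ and $yx=y(x-\chi(x))\in I_\chi$, so the super sign in $[x,y]=xy-(-1)^{|y|}yx$ disappears after reduction modulo $I_\chi$. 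With this identity in hand, well-definedness and injectivity of $\phi$ are immediate: for $\Theta\in\text{End}_\ggg Q_\chi$ and $x\in\mmm$,
\[\text{ad}(x)(\Theta(1_\chi))\;=\;(x-\chi(x))\cdot\Theta(1_\chi)\;=\;\Theta\bigl((x-\chi(x))\cdot 1_\chi\bigr)\;=\;0,\]
so $\phi(\Theta)\in Q_\chi^{\text{ad}\,\mmm}$, and since $1_\chi$ generates $Q_\chi$ over $\ggg$, any $\Theta$ is determined by $\Theta(1_\chi)$.

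The main step, and what I expect to be the principal obstacle, is surjectivity. Given $y+I_\chi\in Q_\chi^{\text{ad}\,\mmm}$, the natural candidate is $\Theta_y(u\cdot 1_\chi):=uy+I_\chi$; one must show this descends to $Q_\chi$, equivalently that $N_\chi\cdot y\subseteq I_\chi$. Rewriting $\text{ad}(x)(y+I_\chi)=0$ via the key identity as $x\cdot(y+I_\chi)=\chi(x)(y+I_\chi)$ for all $x\in\mmm$ exhibits $\bbc(y+I_\chi)$ as a one-dimensional $\mmm$-submodule of $Q_\chi$ on which $\mmm$ acts by the character $\chi$. Here one uses $\chi([\mmm,\mmm])=0$, which is built into the construction of $\mmm$: since $\chi=(e,\cdot)$ and $(\ggg(i),\ggg(j))=0$ unless $i+j=0$, only the component in $\ggg(-1)'$ is dangerous, and that vanishes because $\ggg(-1)'$ is isotropic for $\langle\cdot,\cdot\rangle$ by construction. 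This guarantees that $\chi$ genuinely extends to an algebra character of $U(\mmm)$, and the universal property then gives $n\cdot(y+I_\chi)=\chi(n)(y+I_\chi)$ for every $n\in U(\mmm)$, which vanishes for $n\in N_\chi$. Hence $N_\chi y\subseteq I_\chi$, so $\Theta_y$ descends to a well-defined map; it is automatically a $\ggg$-endomorphism and clearly satisfies $\phi(\Theta_y)=y+I_\chi$.

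For the algebra structure, let $\Theta(1_\chi)=y+I_\chi$ and $\Theta'(1_\chi)=y'+I_\chi$; then
\[(\Theta\circ\Theta')(1_\chi)\;=\;\Theta(y'\cdot 1_\chi)\;=\;y'\cdot\Theta(1_\chi)\;=\;y'y+I_\chi,\]
which, after passing to the opposite algebra, matches the product $(y+I_\chi)\cdot(y'+I_\chi)=yy'+I_\chi$ on $Q_\chi^{\text{ad}\,\mmm}$. A final verification, again using the annihilation $N_\chi\cdot Q_\chi^{\text{ad}\,\mmm}=0$ established above, shows this latter product is well-defined on the quotient: the cross terms $y\alpha'$ and $\alpha\alpha'$ with $\alpha,\alpha'\in I_\chi$ lie in $I_\chi$ because $I_\chi$ is a left $U(\ggg)$-ideal, while $\alpha y'$ lies in $I_\chi$ precisely by the $N_\chi$-annihilation just established. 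This completes the identification of $(\text{End}_\ggg Q_\chi)^{\text{op}}$ with $Q_\chi^{\text{ad}\,\mmm}$ as $\bbc$-algebras.
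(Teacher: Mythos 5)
Your proof is correct and follows exactly the standard Frobenius‑reciprocity argument that the paper itself defers to (its ``proof'' of Theorem \ref{W-C2} is a one‑line reference to the Lie algebra case in \cite[\S3.2]{W}); the key identity $(x-\chi(x))y+I_\chi=[x,y]+I_\chi$, the extension of $\chi$ to an algebra character of $U(\mmm)$ via $\chi([\mmm,\mmm])=0$, and the opposite‑algebra bookkeeping are precisely the details being omitted there. The only cosmetic slip is writing $[x,y]=xy-(-1)^{|y|}yx$ where the sign should be $(-1)^{|x||y|}$, but this is harmless since you only invoke the super sign when $x$ is odd and the term vanishes anyway.
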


\begin{proof}

Since each element in $(\text{End}_{\ggg}Q_{\chi})^{\text{op}}$ is uniquely determined by its effect on $1_\chi\in Q_{\chi}$, it is easy to verify that the mapping $\phi$ keeps the $\mathbb{Z}_2$-graded. The proof is similar to the Lie algebra case \cite[\S3.2]{W}, thus will be omitted.

\end{proof}

\begin{rem}\label{a-d}
We may regard Theorem~\ref{W-C2} as the second definition of finite $W$-superalgebras over $\mathbb{C}$. If one takes $e=0$, then the finite  $W$-superalgebra is simply the enveloping algebra $U({\ggg})$. Hence the finite $W$-superalgebra $U({\ggg},e)$ can be considered as a generalization of universal enveloping algebra $U({\ggg})$.
\end{rem}

\begin{rem}\label{isoWchi} Set $\mathfrak{\widetilde{p}}:={\ppp}\oplus \mathbb{C}\langle u_1,\cdots,u_s, v_1,\cdots,v_{\lfloor\frac{\sfr}{2}\rfloor}\rangle$ to be a subspace of ${\ggg}$. In particular, we have $\mathfrak{\widetilde{p}}={\ppp}$ when the grading ${\ggg}=\bigoplus\limits_{i\in\mathbb{Z}}{\ggg}(i)$ is even (i.e. ${\ggg}(i)=0$ unless $i$ is an even integer). In this case, it follows by the PBW theorem that  $$U({\ggg})=U({\ppp})\oplus I_\chi.$$  Let $\text{Pr}: U({\ggg})\longrightarrow U({\ppp})$ denote the corresponding linear projection.

When the grading ${\ggg}=\bigoplus\limits_{i\in\mathbb{Z}}{\ggg}(i)$ is even, we can define a subalgebra $W_\chi$ of $U({\ppp})$ over $\mathbb{C}$ by
$$W_\chi:=\{u\in U({\ppp})~\mid ~\text{Pr}([x,u])=0~\text{for any}~x\in{\mmm}\},$$
then there is an isomorphism of $\mathbb{C}$-algebras
\begin{align}\label{thethirdeqdef}\varphi:W_\chi&\longrightarrow Q_\chi^{\text{ad}\,{\mmm}}\qquad u\mapsto u(1+I_\chi).
\end{align}
The proof of this isomorphism is straightforward. So $W_\chi$ gives rise to another equivalent definition for the finite $W$-superalgebras over $\mathbb{C}$
when the grading of ${\ggg}$ is even.
\end{rem}

\subsection{Kazhdan filtration}\label{Kfil}

Let ${\ggg}=\bigoplus\limits_{i\in\mathbb{Z}}{\ggg}(i)$ denote the root decomposition of ${\ggg}$ under the action of $\text{ad}\,h$. Define the Kazhdan degree on ${\ggg}$ by declaring $x\in{\ggg}(j)$ is $(j+2)$. Let $\text{F}_iU({\ggg})$ denote the span of monomials $x_1\cdots x_n$ for $n\geqslant 0 $ with $ x_1\in{\ggg}(j_1),\cdots,x_n\in{\ggg}(j_n)$ in $U({\ggg})$ such that $(j_1+2)+\cdots+(j_n+2)\leqslant  i$. Then we get the Kazhdan filtration on $U({\ggg})$:
$$\cdots\subseteq \text{F}_iU({\ggg})\subseteq \text{F}_{i+1}U({\ggg})\subseteq\cdots.$$
The associated graded algebra $\text{gr}\,U({\ggg})$ is the supersymmetric algebra $S({\ggg})$.

The Kazhdan filtration on $U({\ggg})$ induces a filtration on the $\mathbb{Z}_2$-graded left ideal $I_\chi$ and on the quotient $Q_\chi=U({\ggg})/I_\chi$. By the definition $\text{gr}\,Q_\chi=S({\ggg})/\text{gr}\,I_\chi\cong S(\mathfrak{\widetilde{p}})$ is a super-commutative $\mathbb{Z}_+$-graded algebra under the Kazhdan grading. One can easily obtain an induced strictly positive filtration
$$\text{F}_0U({\ggg},e)\subseteq \text{F}_{1}U({\ggg},e)\subseteq\cdots$$ on $U({\ggg},e)$ with $\text{F}_0U({\ggg},e)=\mathbb{C}$.

In virtue of the bilinear form $(\cdot,\cdot)$, we can identify $S({\ggg})$ with the polynomial superalgebra $\mathbb{C}[{\ggg}]$ of regular functions on ${\ggg}$. Then $\text{gr}\,I_\chi$ is the ideal generated by the functions $\{x-\chi(x)\mid x\in{\mmm}\}$, i.e. the left ideal of all functions in $\mathbb{C}[{\ggg}]$ vanishing on $e+{\mmm}^\perp$ of ${\ggg}$. Hence $\text{gr}\,Q_\chi$ can be identified with $\mathbb{C}[e+{\mmm}^\perp]$. Since $S(\mathfrak{\widetilde{p}})\cong\text{gr}\,Q_\chi$, we have $$S(\mathfrak{\widetilde{p}})\cong\mathbb{C}[e+{\mmm}^\perp].$$
If just considering the even part, we can get an isomorphism between $\mathbb{C}$-algebras (in the usual sense, not super)$$S(\mathfrak{\widetilde{p}}_{\bar{0}})\cong\mathbb{C}[e+{\mmm}_{\bar{0}}^\perp],$$ where ${\mmm}^\perp_{\bar{0}}:=\{f\in{\ggg}^*_{\bar{0}}\mid f({\mmm}_{\bar{0}})=0\}$, ${\mmm}^\perp_{\bar{0}}$ is identified with the subalgebra of ${\ggg}_{\bar0}$ by the bilinear form $(\cdot,\cdot)$, and $\mathbb{C}[e+{\mmm}_{\bar{0}}^\perp]$ the regular functions on affine variety $e+{\mmm}_{\bar{0}}^\perp$.

\subsection{Whittaker functor and Skryabin equivalence}

In this part we will establish the connection between the representation category of finite $W$-superalgebras and the category of Whittaker modules. For the Lie algebra case, one refers to \cite[\S5]{W} for more detail.

\begin{defn}\label{Whittaker}
A ${\ggg}$-module $L$ is called a Whittaker module if $a-\chi(a),~\forall a\in{\mmm},$ acts on $L$ locally nilpotently. A Whittaker vector in a Whittaker ${\ggg}$-module $L$ is a vector $v\in L$ which satisfies $(a-\chi(a))v=0,~\forall a\in{\mmm}$.
\end{defn}

Let ${\ggg}\text{-}W\text{mod}^\chi$ denote the category of finitely generated Whittaker ${\ggg}$-modules, and assume all the morphisms are even. Write $$\text{Wh(L)}=\{v\in L\mid (a-\chi(a))v=0,\forall a\in{\mmm}\}$$ the subspace of all Whittaker vectors in $L$.

Recall the second definition of finite $W$-superalgebras (see Theorem~\ref{W-C2}) shows that $U({\ggg},e)\cong(U({\ggg})/I_\chi)^{\text{ad}{\mmm}}$. Denote by $\bar{y}\in U({\ggg})/I_\chi$ the coset associated to $y\in U({\ggg})$.

\begin{theorem}\label{W-n}
(1) Given a Whittaker ${\ggg}$-module $L$ with an action map $\rho$, $\text{Wh}(L)$ is naturally a $U({\ggg},e)$-module by letting $$\bar{y}.v=\rho(y)v$$ for $v\in\text{Wh}(L)$ and $\bar{y}\in U({\ggg})/I_\chi$.

(2) For $M\in U({\ggg},e)$, $Q_\chi\otimes_{U({\ggg},e)}M$ is a Whittaker ${\ggg}$-module by letting $$y.(q\otimes v)=(y.q)\otimes v$$ for $y\in U({\ggg})$ and $q\in Q_\chi,~v\in M$.
\end{theorem}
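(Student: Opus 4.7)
The proof follows the classical template of Skryabin's equivalence for finite $W$-algebras \cite[\S5]{W}, with the novelty being careful tracking of super signs.

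For part (1), three items must be checked. Well-definedness: $N_\chi$ is the kernel of the character $\varepsilon_\chi\colon U(\mmm)\to\bbc$ extending $\chi$, and the map $\bbc_\chi\to L$, $1\mapsto v$, is an $\mmm$-homomorphism for any $v\in\text{Wh}(L)$; hence $\rho(N_\chi)v=0$, and therefore $\rho(I_\chi)v=\rho(U(\ggg))\rho(N_\chi)v=0$. Invariance: for homogeneous $a\in\mmm$ and $y\in U(\ggg)$, the super-commutation $ay=(-1)^{|a||y|}ya+[a,y]$ in $U(\ggg)$, together with $\chi(\ggg_\bo)=0$ (Remark~\ref{mathbbq}), gives
\[
(a-\chi(a))\rho(y)v=\rho([a,y])v
\]
after cancellation of the $\chi(a)$ terms in both the even and odd cases for $a$; the $\text{ad}\,\mmm$-invariance of $\bar y$ forces $[a,y]\in I_\chi$, so the right-hand side vanishes by the previous step. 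Associativity of the action is immediate from $\bar y_1\cdot\bar y_2=\overline{y_1y_2}$ combined with the algebra-homomorphism property of $\rho$.

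For part (2), the action is well defined because $Q_\chi$ is naturally a $(U(\ggg),U(\ggg,e))$-bimodule: the right $U(\ggg,e)$-action comes from $U(\ggg,e)=(\text{End}_\ggg Q_\chi)^{\text{op}}$, hence commutes with the left $\ggg$-action, and so $y.(q\otimes v):=(y.q)\otimes v$ descends to the balanced tensor product. The Whittaker condition then reduces to showing that $Q_\chi$ itself is Whittaker, since for $w=\sum_i q_i\otimes v_i$ any integer $N$ for which $(a-\chi(a))^N$ annihilates every $q_i$ in $Q_\chi$ will also annihilate $w$ in the tensor product.

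To verify that $Q_\chi$ is Whittaker I would exploit the Kazhdan filtration from \S\ref{Kfil}. Because every element of $\mathfrak{\widetilde p}$ has strictly positive Kazhdan degree, each $F_iQ_\chi$ is finite-dimensional with $F_0Q_\chi=\bbc\cdot 1_\chi$ and $F_{-1}Q_\chi=0$. A direct computation --- pushing $a$ past a PBW monomial $p$ in $\mathfrak{\widetilde p}$ via super-commutation and using $a\cdot 1_\chi=\chi(a)1_\chi$ in $Q_\chi$ --- yields
\[
(a-\chi(a))(p\cdot 1_\chi)=[a,p]\cdot 1_\chi
\]
in both parities of $a$. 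Since the Kazhdan degree of $[a,x]$ is two less than that of $ax$ and every $a\in\mmm$ has Kazhdan degree at most $1$, the operator $a-\chi(a)$ strictly lowers the Kazhdan filtration; iterating, $(a-\chi(a))^{N+1}$ annihilates $F_NQ_\chi$, establishing local nilpotence on all of $Q_\chi$. The principal technical point I anticipate is this Kazhdan-degree bookkeeping, which the even-odd sign subtleties in $\mmm$ leave unaffected because $\chi$ vanishes on $\ggg_\bo$.
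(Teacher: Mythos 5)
Your proposal is correct and follows essentially the same route as the paper, which simply defers to the Lie algebra case (the Skryabin-type argument in \cite[Lemma 35]{W}): well-definedness via $\rho(I_\chi)v=0$, invariance of $\mathrm{Wh}(L)$ via the $\operatorname{ad}\mmm$-invariance of $\bar y$, and local nilpotence on $Q_\chi\otimes_{U(\ggg,e)}M$ reduced to $Q_\chi$ itself by the Kazhdan-filtration argument. Your explicit tracking of the super signs, using $\chi(\ggg_{\bar 1})=0$ to make the odd case collapse to the even one, is exactly the "same as the Lie algebra case" claim made precise.
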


\begin{proof}
The proof is straightforward and is the same as the Lie algebra case (see e.g. proof of \cite[Lemma 35]{W}).
\end{proof}

Given a ${\ggg}\text{-}W\text{mod}^\chi$ $M$, define $M^{{\mmm}}$ by $$M^{{\mmm}}=\{m\in M \mid  x.m=\chi(x)m ~\text{for all}~x\in{\mmm}\},$$
then $M^{{\mmm}}$ can be considered as a $U({\ggg},e)$-module. The following theorem shows that there exists an equivalence of categories between the ${\ggg}\text{-}W\text{mod}^\chi$ and the $U({\ggg},e)$-modules.

\begin{theorem}\label{functor}
The functor $Q_\chi\otimes_{U({\ggg},e)}-:U({\ggg},e)\text{-mod}\longrightarrow
{\ggg}\text{-}W\text{mod}^\chi$ is an equivalence of categories, with $\text{Wh}:{\ggg}\text{-}W\text{mod}^\chi\longrightarrow U({\ggg},e)\text{-mod}$ as its quasi-inverse.
\end{theorem}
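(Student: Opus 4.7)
My approach is to verify that the canonical unit $\eta_M\colon M\to\mathrm{Wh}(Q_\chi\otimes_{U(\ggg,e)}M)$, $m\mapsto 1_\chi\otimes m$, and the counit $\varepsilon_L\colon Q_\chi\otimes_{U(\ggg,e)}\mathrm{Wh}(L)\to L$, $q\otimes w\mapsto q\cdot w$, of the adjoint pair are both isomorphisms. This is the super analogue of Skryabin's equivalence (the appendix to \cite{P2}); the structural backbone of the argument carries over directly once one respects $\mathbb{Z}_2$-graded signs and accounts for the two parity cases of $\sfr=\dim\ggg(-1)_{\bar 1}$ from Conventions~\ref{firstconventions}.

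The cornerstone is to show that $Q_\chi$ is free as a right $U(\ggg,e)$-module, which I would deduce from the Kazhdan filtration of \S\ref{Kfil}. Combining the PBW theorem for $U(\ggg)$ with the vector-space decomposition $\ggg=\mmm\oplus\widetilde{\ppp}$, the associated graded $\mathrm{gr}\,Q_\chi$ is identified with $S(\widetilde{\ppp})\cong\mathbb{C}[e+\mmm^\perp]$. Under the isomorphism $U(\ggg,e)\cong Q_\chi^{\mathrm{ad}\,\mmm}$ of Theorem~\ref{W-C2}, the right multiplication of $U(\ggg,e)$ on $Q_\chi$ is Kazhdan-filtered, and the associated graded action of $\mathrm{gr}\,U(\ggg,e)$ on $\mathrm{gr}\,Q_\chi$ is realized as multiplication inside the super-commutative algebra $\mathbb{C}[e+\mmm^\perp]$ by the subring $\mathrm{gr}\,U(\ggg,e)$. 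A transverse-slice dimension count, together with the identification $\mathrm{gr}\,U(\ggg,e)\cong S(\ggg^e)$ (in the spirit of Theorem~\ref{graded W}), shows this inclusion is flat, and a standard filtered-to-graded lifting, valid because the Kazhdan filtration is strictly positive and exhaustive, promotes freeness to the filtered level.

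Once freeness is in hand, the unit map $\eta_M$ is immediate: freeness implies $\mathrm{Wh}(Q_\chi\otimes_{U(\ggg,e)}M)\cong\mathrm{Wh}(Q_\chi)\otimes_{U(\ggg,e)}M$, and Theorem~\ref{W-C2} identifies $\mathrm{Wh}(Q_\chi)$ with $Q_\chi^{\mathrm{ad}\,\mmm}\cdot 1_\chi\cong U(\ggg,e)$. The identification of Whittaker vectors with $\mathrm{ad}\,\mmm$-invariants is compatible in the super setting because $\chi$ vanishes on $\ggg_{\bar 1}$, so the sign discrepancy between the left action and the adjoint action of odd elements collapses. Consequently $\eta_M$ becomes the canonical isomorphism $M\cong U(\ggg,e)\otimes_{U(\ggg,e)}M$. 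For the counit, I would equip $L$ with a Kazhdan-compatible good filtration so that $\mathrm{gr}\,L$ is finitely generated over $\mathrm{gr}\,Q_\chi\cong\mathbb{C}[e+\mmm^\perp]$; a Nakayama-style argument, leveraging local nilpotence of $\mmm-\chi$ on $L$, then shows that $L$ is $U(\ggg)$-generated by $\mathrm{Wh}(L)$, giving surjectivity of $\varepsilon_L$. Injectivity follows from the freeness of $Q_\chi$ over $U(\ggg,e)$: any element expressed in a chosen free basis that maps to zero in $L$ must have all coefficients equal to zero.

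The main obstacle I anticipate is the odd-parity case $\sfr$ odd, where $\mmm\subsetneq\mmm'$ and the distinguished odd generator $v_{(\sfr+1)/2}$ with $\langle v_{(\sfr+1)/2},v_{(\sfr+1)/2}\rangle=1$ must be folded into the free-basis description of $Q_\chi$ over $U(\ggg,e)$. This element is responsible for the extra exterior factor in Theorem~\ref{graded W}(2), and one must verify that super-signs from commuting odd generators remain consistent, so that the $\mathbb{Z}_2$-graded identification $\mathrm{Wh}(Q_\chi)\cong U(\ggg,e)$ holds not only as vector spaces but as graded right modules. Once these sign conventions are settled, the remaining steps proceed essentially identically to the classical Lie-algebra case.
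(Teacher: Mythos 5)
Your architecture (show the unit and counit of the adjunction are isomorphisms, with freeness of $Q_\chi$ as a right $U(\ggg,e)$-module as the cornerstone) is a legitimate alternative to what the paper does: the paper simply invokes Skryabin's original argument \cite[Theorem 1]{S2}, as adapted and corrected in the super setting by Wang--Zhao \cite[Proposition 4.2]{WZ}, which works directly with the local nilpotence of the operators $a-\chi(a)$ and a weight filtration and never needs the PBW theorem for $U(\ggg,e)$. Your route is closer in spirit to \cite{GG}. As written, however, it has a logical-order problem and at least one genuine gap. The order problem: you derive freeness from $\mathrm{gr}\,U(\ggg,e)\cong S(\ggg^e)$, which is Theorem \ref{graded W}, the main result of the paper, proved only in \S\ref{rew}--\S\ref{swc} by reduction modulo $p$. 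This is not circular (Theorem \ref{graded W} does not rely on Theorem \ref{functor}), but it makes the Skryabin equivalence depend on the hardest theorem in the paper, none of which is available where Theorem \ref{functor} is stated. Moreover, the image of $\mathrm{gr}\,U(\ggg,e)$ inside $S(\widetilde{\ppp})$ is not the subalgebra generated by the subspace $\ggg^e$ but a deformed copy of it (generated by the symbols of the $\Theta_i$, which are $Y_i$ plus higher-order corrections by Theorem \ref{PBWC}(1)), so freeness of $S(\widetilde{\ppp})$ over it is not the tautology of choosing a linear complement; it is the transversality of the Slodowy slice, and your ``dimension count'' does not address the odd variables, where miracle-flatness arguments do not apply directly.

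The more serious gap is the step ``freeness implies $\mathrm{Wh}(Q_\chi\otimes_{U(\ggg,e)}M)\cong\mathrm{Wh}(Q_\chi)\otimes_{U(\ggg,e)}M$.'' This is false as a formal implication. Writing $Q_\chi=\bigoplus_i b_i\,U(\ggg,e)$, the operators $a-\chi(a)$ for $a\in\mmm$ are right-$U(\ggg,e)$-linear endomorphisms of an infinite-rank free module, and taking their common kernel does not commute with $-\otimes_{U(\ggg,e)}M$ for an arbitrary (non-flat) finite-dimensional $M$; equivalently, $\mathrm{Wh}(-)=\mathrm{Hom}_\ggg(Q_\chi,-)$ with $Q_\chi$ not finitely generated over $U(\ggg,e)$, so it does not commute with the tensor product for abstract reasons. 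What is actually needed is the concrete $(\mmm,U(\ggg,e))$-bimodule structure: a free basis triangular with respect to the $(e\text{-degree},\text{weight})$ order and a leading-term analysis on $Q_\chi\otimes_{U(\ggg,e)}M$ analogous to Lemma \ref{hw}. That computation is the heart of Skryabin's and Wang--Zhao's proofs, and the same issue resurfaces in your injectivity argument for the counit. So the skeleton is viable, but the two steps you label as immediate are precisely where the proof lives.
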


The theorem generalizes the situation of the Lie algebra case by Skryabin. The proof will be omitted for the similarity, and for more specific details one refers to \cite[Therorem 1]{S2} (Note that some of the details need to be improved; see the proof of \cite[Proposition 4.2]{WZ} by Wang-Zhao).

\subsection{Finite $W$-superalgebras modulo $p$ via an admissible ring $A$}\label{modulo p}
From now on to the end of this section, the ground field will be $\bbk$ in turn, an algebraically closed field of prime characteristic $p$. The purpose here is to introduce a ``modular $p$ reduction" version from finite $W$-superalgebras over $\bbc$, and some reduced $W$-superalgebras. The approach is to make use of the admissible ring $A$  (strictly speaking, the prime $p$ is dependent on the choice of $A$), analogous of the arguments in finite $W$-algebras (cf. \cite{P2}).

Given an admissible ring $A$, set $Q_{\chi,A}:=U({\ggg}_A)\otimes_{U({\mmm}_A)}A_\chi$, where $A_\chi=A1_\chi$. It follows by the definition that $Q_{\chi,A}$ is a ${\ggg}_A$-stable $A$-lattice in $Q_{\chi}$ with $$\{x^\mathbf{a}y^\mathbf{b}u^\mathbf{c}v^\mathbf{d}\otimes1_\chi\mid (\mathbf{a},\mathbf{b},\mathbf{c},\mathbf{d})\in\mathbb{Z}^m_+\times\Lambda'_n\times\mathbb{Z}^s_+\times\Lambda'_t\}$$
as a free basis. Let $N_{\chi,A}$ denote the homogeneous ideal of codimension one in $A$-subalgebra $U({\mmm}_A)$ generated by all $x-\chi(x)$ with $x\in{\mmm}_A$. Set $I_{\chi,A}:=U({\ggg}_A)N_{\chi,A}$, the left ideal of $U({\ggg}_A)$. Then $Q_{\chi,A}\cong U({\ggg}_A)/I_{\chi,A}$ as ${\ggg}_A$-modules.

Pick a prime $p\in\Pi(A)$ and denote by ${\bbk}=\overline{\mathbb{F}}_p$ the algebraic closure of $\mathbb{F}_p$. By Definition~\ref{admissible} and the discussion thereafter, we can assume that $(\cdot,\cdot)$ is $A$-valued on ${\ggg}_A$ after enlarging $A$, possibly. Set $\ggg_\bbz$ to be the Chevalley $\bbz$-form as in \S\ref{Chevalleybasis}.
  Then we can have
   $$\ggg_\bbk:=\ggg_\bbz\otimes_{\bbz}\bbk$$
   (note that the $\bbz$-form should be adjusted into $\bbz[a]$-form when $D(2,1,a)$ ($a\notin\bbz$, with $a$ being an algebraic number) is considered, see Remark \ref{fractZa}). The bilinear form $(\cdot,\cdot)$ induces a bilinear form on the Lie superalgebra ${\ggg}_{\bbk}\cong{\ggg}_A\otimes_A{\bbk}$. In the following we still denote this bilinear form by $(\cdot,\cdot)$.

If we denote by $G_{\bbk}$ the algebraic ${\bbk}$-supergroup of distribution algebra $U_{\bbk}=U_\mathbb{Z}\otimes_\mathbb{Z}{\bbk}$, then ${\ggg}_{\bbk}=\text{Lie}(G_{\bbk})$ (c.f. \cite[\S2.2]{SW} and \cite[\S{I}.7.10]{J}). Note that the bilinear form $(\cdot,\cdot)$ is non-degenerate and $\text{Ad}\,(G_{\bbk})_{\text{ev}}$-invariant. For $x\in{\ggg}_A$, set $\bar{x}:=x\otimes1$, an element of ${\ggg}_{\bbk}$. Especially, we have $\bar{e}=e\otimes1,~\bar{f}=f\otimes1$ and $\bar{h}=h\otimes1$ in ${\ggg}_{\bbk}$. Identify $\chi$ with the linear function $(\bar{e},\cdot)$ on ${\ggg}_{\bbk}$. 
Set ${\mmm}_{\bbk}:=\mmm_A\otimes_A{\bbk}$, $\mmm'_{\bbk}:=\mmm'_A\otimes_A{\bbk}$, ${\ppp}_{\bbk}:={\ppp}_A\otimes_A{\bbk}$, $\mathfrak{\widetilde{p}}_{\bbk}:=\mathfrak{\widetilde{p}}_A\otimes_A{\bbk}$.

Recall that ${\ggg}_{\bbk}$ is a restricted Lie superalgebra. For each $\bar x\in({\ggg}_{\bbk})_{\bar{0}}$ it is obvious that $\bar x^p-\bar x^{[p]}\in U({\ggg}_{\bbk})$ is contained in the center of $U({\ggg}_{\bbk})$ by the definition. The subalgebra ${\bbk}\langle \bar x^p-\bar x^{[p]}\mid \bar x\in({\ggg}_{\bbk})_{\bar{0}}\rangle$ of $U({\ggg}_{\bbk})$ is called the $p$-center of $U({\ggg}_{\bbk})$ and denote $Z_p({\ggg}_{\bbk})$ for short. It follows from the PBW theorem of $U({\ggg}_{\bbk})$ that $Z_p({\ggg}_{\bbk})$ is isomorphic to a polynomial algebra (in the usual sense, not super) in $\text{dim}\,({\ggg}_{\bbk})_{\bar{0}}$ variables. For every maximal ideal $H$ of $Z_p({\ggg}_{\bbk})$ there is a unique linear function $\eta=\eta_H\in({\ggg}_{\bbk})_{\bar{0}}^*$ such that $$H=\langle \bar x^p-\bar x^{[p]}-\eta(\bar x)^p\mid \bar x\in({\ggg}_{\bbk})_{\bar{0}}\rangle.$$
Since the Frobenius map of ${\bbk}$ is bijective, this enables us to identify the maximal spectrum $\text{Specm}(Z_p({\ggg}_{\bbk}))$ of $Z_p({\ggg}_{\bbk})$ with $({\ggg}_{\bbk})_{\bar{0}}^*$.

For any $\xi\in({\ggg}_{\bbk})_{\bar{0}}^*$ we write $J_\xi$ the two-sided ideal of $U({\ggg}_{\bbk})$ generated by the even central elements $$\{\bar x^p-\bar x^{[p]}-\xi(\bar x)^p\mid \bar x\in({\ggg}_{\bbk})_{\bar{0}}\}.$$ Then the quotient algebra $U_\xi({\ggg}_{\bbk}):=U({\ggg}_{\bbk})/J_\xi$ is a ${\ggg}_{\bbk}$-module, which is called the reduced enveloping algebra with $p$-character $\xi$. We often regard $\xi\in{\ggg}_{\bbk}^*$ by letting $\xi(({\ggg}_{\bbk})_{\bar{1}})=0$. By the classical theory of Lie superalgebras, we have $$\text{dim}\,U_\xi({\ggg}_{\bbk})=p^{\text{dim}\,({\ggg}_{\bbk})_{\bar{0}}}2^{\text{dim}\,({\ggg}_{\bbk})_{\bar{1}}}.$$

For $i\in\mathbb{Z}$, define the graded subspaces of ${\ggg}_{\bbk}$ over ${\bbk}$ by $${\ggg}_{\bbk}(i):={\ggg}_A(i)\otimes_A{\bbk},\quad {\mmm}_{\bbk}(i):={\mmm}_A(i)\otimes_A{\bbk}.$$
Due to our assumptions on $A$, the elements $\bar{x}_1,\cdots,\bar{x}_l$ and $\bar{y}_1,\cdots,\bar{y}_q$ form bases of the centralizers $({\ggg}^{\bar{e}}_{\bbk})_{\bar{0}}$ and $({\ggg}^{\bar e}_{\bbk})_{\bar{1}}$ of $\bar{e}$ in $(\ggg_\bbk)_\bz$ and $(\ggg_\bbk)_{\bar 1}$, respectively.

It follows from \cite[\S4.1]{WZ} that the subalgebra ${\mmm}_{\bbk}$ is $p$-nilpotent, and the linear function $\chi$ vanishes on the $p$-closure of $[{\mmm}_{\bbk},{\mmm}_{\bbk}]$. More precisely,  we have the following consequence:

\begin{lemma}\label{restricted-m} The Lie superalgebra $\mmm_\bbk$ is a restricted subalgebra of $\ggg_\bbk$.
\end{lemma}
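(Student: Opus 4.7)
The only point needing verification is that $(\mmm_\bbk)_{\bar{0}}$ is closed under the $p$-th power map, since $\mmm_\bbk$ is already a Lie subalgebra of the restricted Lie superalgebra $\ggg_\bbk$.

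My first step is to reduce to homogeneous elements. Any $x \in (\mmm_\bbk)_{\bar{0}}$ admits a decomposition $x = \sum_{k \geq 1} x_{-k}$ with $x_{-k} \in \ggg_\bbk(-k)_{\bar{0}}$, because $\mmm_\bbk \subseteq \bigoplus_{i \leq -1} \ggg_\bbk(i)$. Iterating the identity of Definition~\ref{restricted}(c) pairwise expresses $x^{[p]}$ as $\sum_k x_{-k}^{[p]}$ plus iterated Lie brackets of the $x_{-k}$; since $\mmm_\bbk$ is closed under the super-bracket, all bracket terms lie in $\mmm_\bbk$. Consequently it suffices to verify $y^{[p]} \in \mmm_\bbk$ for each homogeneous $y \in \ggg_\bbk(-k)_{\bar{0}}$ with $k \geq 1$.

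For such $y$, I would use Definition~\ref{restricted}(b) to write $\text{ad}(y^{[p]}) = (\text{ad}\,y)^p$; the right-hand side is a graded endomorphism of $\ggg_\bbk$ of Dynkin degree $-kp$. Decomposing $y^{[p]} = \sum_{d} z_d$ with $z_d \in \ggg_\bbk(d)_{\bar{0}}$ and matching homogeneous components of $\text{ad}$ on both sides forces $\text{ad}\,z_d = 0$ for every $d \neq -kp$; hence each such $z_d$ lies in $Z(\ggg_\bbk)_{\bar{0}}$, which in turn sits inside $\ggg_\bbk(0)_{\bar{0}}$ (as the centre centralizes $h$). So $z_d = 0$ for $d \notin \{0, -kp\}$, giving
\[ y^{[p]} = z_{-kp} + z_0, \qquad z_{-kp} \in \ggg_\bbk(-kp)_{\bar{0}}, \qquad z_0 \in Z(\ggg_\bbk)_{\bar{0}}. \]
Because $-kp \leq -p \leq -3$, the component $z_{-kp}$ already lies in $\mmm_\bbk$, so it remains only to eliminate $z_0$.

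To kill $z_0$, note that $y$ is $\text{ad}$-nilpotent, since it sits in strictly negative Dynkin grading while the total grading on $\ggg_\bbk$ has finite support; therefore $y$ is nilpotent in any faithful rational representation of the Chevalley supergroup $G_\bbk$, and so is $y^{[p]}$. In each family of Section~\ref{basicLiesuper} the centre $Z(\ggg_\bbk)_{\bar{0}}$ is at most one-dimensional and spanned by a semisimple element (a scalar multiple of the identity in the matrix realisations), so no nonzero nilpotent central element exists and $z_0 = 0$. I expect the main obstacle to lie in this last step: ruling out a nonzero central contribution to $y^{[p]}$ requires a case-by-case inspection of the centre across the seven families in Section~\ref{basicLiesuper}, relying on the underlying reductive structure of the even part of $\ggg_\bbk$.
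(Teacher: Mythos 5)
Your argument reaches the right conclusion, but by a genuinely different route from the paper's. The paper's proof observes that the Dynkin grading on $\ggg_\bbk$ is realized by a cocharacter $\tau:\bbk^\times\to (G_\bbk)_{\ev}$, and since the $p$-th power map is $\mathrm{Ad}\,(G_\bbk)_{\ev}$-equivariant this gives $(\ggg_\bbk(i)_{\bz})^{[p]}\subseteq\ggg_\bbk(pi)_{\bz}$ on the nose; a homogeneous even element of degree $i\leqslant-1$ then lands in degree $pi\leqslant-p\leqslant-3$, hence in $\mmm_\bbk$, and the reduction to homogeneous elements via Definition~\ref{restricted}(c) is the same as yours. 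Your substitute for this — using only axiom (b) and the adjoint representation — is strictly weaker: it pins down $y^{[p]}$ only modulo a central summand $z_0$, which you must then kill by hand. The group-equivariance argument buys exactly the uniformity you were worried about losing at the end: no case-by-case inspection of centres across the families of \S\ref{basicLiesuper} is needed.

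The one step in your proposal that is not sound as written is the inference ``$y$ is $\mathrm{ad}$-nilpotent, therefore $y$ is nilpotent in any faithful rational representation of $G_\bbk$.'' That implication is false in general: the central element $I_{m+n}\in\mathfrak{gl}(m|n)_\bbk$ is $\mathrm{ad}$-trivial, hence $\mathrm{ad}$-nilpotent, yet acts as the identity in the defining representation — and a central element is precisely what you are trying to exclude, so the gap is not cosmetic. What is true, and what you should invoke instead, is that $y$ is homogeneous of nonzero degree for the cocharacter $\tau$ (equivalently $[h,y]=-ky$ with $k\not\equiv0 \pmod p$), so in any rational $G_\bbk$-module one has $\rho(\tau(t))\,d\rho(y)\,\rho(\tau(t))^{-1}=t^{-k}\,d\rho(y)$ for all $t$, forcing $d\rho(y)$ to be nilpotent; since $d\rho$ is a restricted representation, $d\rho\bigl(y^{[p]^N}\bigr)=d\rho(y)^{p^N}=0$ for $N\gg0$, and faithfulness gives $[p]$-nilpotence of $y$, hence of $y^{[p]}=z_{-kp}+z_0$. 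With that repair — and with the standing assumption $p\gg0$, which you also need in the earlier step asserting $Z(\ggg_\bbk)_{\bz}\subseteq\ggg_\bbk(0)_{\bz}$ (so that $\ker(\mathrm{ad}\,h)$ meets no $\ggg_\bbk(d)$ with $d$ a nonzero multiple of $p$) — your concluding observation that the centre contains no nonzero $[p]$-nilpotent element does close the argument.
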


\begin{proof} Recall that under the adjoint action of $(G_\bbk)_\ev$ on $\ggg_\bbk$, each homomorphism of algebraic group $\tau:\bbk^\times\rightarrow (G_\bbk)_\ev$ defines a grading $\ggg_\bbk=\bigoplus_{i\in\bbz}\ggg_\bbk(i;\tau)$ with
$$\ggg_\bbk(i;\tau)=\{v\in\ggg_\bbk\mid \mbox{Ad}(\tau(t))(v)=t^iv\mbox{ for all }t\in \bbk^\times\}.$$
  Note that $\ggg_\bbk$ admits a non-degenerate invariant even bilinear form $(\cdot,\cdot)$.
 As discussed in \cite[\S3.2-\S3.4]{WZ} for the Lie superalgebras of type $\mathfrak{gl}$, $\mathfrak{sl}$ and $\mathfrak{osp}$, and  \cite[\S3.5]{WZ} for exceptional basic Lie superalgebras, one can define a cocharacter $\tau$ as in \cite[Remarks 3.4 and \S3.5]{WZ}, such that the $\tau$-grading of $\ggg_\bbk$ coincides with the grading arising from the admissible structure, i.e.
  $$\ggg_\bbk(i)=\{X\in\ggg_\bbk\mid \text{Ad}\,(\tau(t))(X)=t^iX;~\forall t\in \bbk^\times\}. $$
 Therefore, this graded structure satisfies the property $(\ggg_\bbk(i)_\bz)^{[p]}\subset \ggg_\bbk(pi)_\bz$. From all the discussion above and the definition of $\mmm_\bbk$, the conclusion follows.
\end{proof}

From the above lemma, we can set $$Q_{\chi,{\bbk}}:=U({\ggg}_{\bbk})\otimes_{U({\mmm}_{\bbk})}{\bbk}_\chi,$$ where ${\bbk}_\chi=A_\chi\otimes_{A}{\bbk}={\bbk}1_\chi$. Clearly, ${\bbk}1_\chi$ is a one-dimensional ${\mmm}_{\bbk}$-module with the property $\bar x.1_\chi=\chi(\bar x)1_\chi$ for all $\bar x\in{\mmm}_{\bbk}$ and it is obvious that $Q_{\chi,{\bbk}}\cong Q_{\chi,A}\otimes_A{\bbk}$. Define $N_{\chi,{\bbk}}:=N_{\chi,A}\otimes_A{\bbk}$ and $I_{\chi,{\bbk}}:=I_{\chi,A}\otimes_A{\bbk}$.

\begin{defn}\label{W-k}
Define the finite $W$-superalgebra over ${\bbk}$ by $$U(\ggg_{\bbk},\bar{e}):=(\text{End}_{\ggg_{\bbk}}Q_{\chi,\bbk})^{\text{op}},$$
where $(\text{End}_{\ggg_\bbk} Q_{\chi,{\bbk}})^{\text{op}}$ denotes the opposite algebra of $\text{End}_{\ggg_\bbk}Q_{\chi,{\bbk}}$.
\end{defn}

\subsection{Reduced $W$-superalgebras over $\bbk$}\label{modulo p2} Keep the notations as above. Especially take $A$ to be an admissible ring, and $\bbk$ an algebraically closed field of prime characteristic $p\in \Pi(A)$.
Let ${\ggg}_A^*$ be the $A$-module dual to ${\ggg}_A$, so that ${\ggg}^*={\ggg}_A^*\otimes_A\mathbb{C}, ~{\ggg}_{\bbk}^*={\ggg}_A^*\otimes_A{\bbk}$. Let $({\mmm}_A^\perp)_{\bar{0}}$ denote the set of all linear functions on $({\ggg}_A)_{\bar0}$ vanishing on $({\mmm}_A)_{\bar{0}}$. By the assumptions on $A$,  $({\mmm}_A^\perp)_{\bar{0}}$ is a free $A$-submodule and a direct summand of ${\ggg}^*_A$. Note that
$({\mmm}_A^\perp\otimes_A\mathbb{C})_{\bar{0}}$ and $({\mmm}_A^\perp\otimes_A{\bbk})_{\bar{0}}$ can be identified with the annihilators ${\mmm}^\perp_{\bar{0}}
:=\{f\in{\ggg}^*_{\bar{0}}\mid f({\mmm}_{\bar{0}})=0\}$ and $({\mmm}_{\bbk}^\perp)_{\bar{0}}:=\{f\in({\ggg}_{\bbk})^*_{\bar{0}}\mid f(({\mmm}_{\bbk})_{\bar{0}})=0\}$, respectively.

Given a linear function $\eta\in\chi+({\mmm}_{\bbk}^\perp)_{\bar{0}}$, set ${\ggg}_{\bbk}$-module $$Q_{\chi}^\eta:=Q_{\chi,{\bbk}}/J_\eta Q_{\chi,{\bbk}},$$ where $J_\eta$ is the homogeneous ideal of $U({\ggg}_{\bbk})$ generated by all $\{\bar x^p-\bar x^{[p]}-\eta(\bar x)^p\mid \bar x\in({\ggg}_{\bbk})_{\bar0}\}$. Evidently $Q_{\chi}^\eta$ is a ${\ggg}_{\bbk}$-module with $p$-character $\eta$, and there exists a ${\ggg}_{\bbk}$-module isomorphism
\begin{equation}\label{qqq}
Q_{\chi}^\eta\cong U_\eta({\ggg}_{\bbk})\otimes_{U_\eta({\mmm}_{\bbk})}\bar 1_\chi.
\end{equation}

\begin{defn}\label{reduced W}
Define a reduced $W$-superalgebra $U_\eta({\ggg}_{\bbk},\bar{e})$ associated to $\ggg_\bbk$ and to $p$-character $\eta\in\chi+({\mmm}_{\bbk}^\bot)_{\bar0}$ by $$U_\eta({\ggg}_{\bbk},\bar{e}):=(\text{End}_{{\ggg}_{\bbk}}Q_{\chi}^\eta)^{\text{op}}.$$
\end{defn}

It is immediate by the definition that the restriction of $\eta$ coincides with that of $\chi$ on $({\mmm}_{\bbk})_{\bar{0}}$. If we let $\eta(({\mmm}_{\bbk})_{\bar{1}})=0$, then the ideal of $U({\mmm}_{\bbk})$ generated by all $\{\bar x-\eta(\bar x)\mid \bar x\in{\mmm}_{\bbk}\}$ equals $N_{\chi,{\bbk}}=N_{\chi,A}\otimes_A{\bbk}$, and ${\bbk}_\chi={\bbk}_\eta$ as ${\mmm}_{\bbk}$-modules. Let $N_{{\mmm}_{\bbk}}$ denote the Jacobson radical of $U_\eta(\mmm_\bbk)$, i.e. the ideal of codimensional one in $U_\eta(\mmm_\bbk)$ generated by all $\langle x-\eta(x)\,\mid \,x\in\mmm_\bbk\rangle$, and define $I_{{\mmm}_{\bbk}}:=U_\eta({\ggg}_{\bbk})N_{{\mmm}_{\bbk}}$ be the ideal of $U_\eta({\ggg}_{\bbk})$.

Motivated by \cite[Theorem 2.3(iv)]{P2}, one can expect the following result. It is notable that the first isomorphism in Proposition~\ref{invariant} was proposed in \cite{W} for the special case $\chi=\eta$ (without proof).

\begin{prop}\label{invariant}
There exist isomorphisms of ${\bbk}$-algebras:
\[\begin{array}{lllll}U_\eta({\ggg}_{\bbk},\bar{e})&\longrightarrow&(Q_\chi^\eta)^{\text{ad}\,{\mmm}_{\bbk}}&\longrightarrow&U_\eta({\ggg}_{\bbk})^{\text{ad}\,{\mmm}_{\bbk}}/I_{{\mmm}_{\bbk}}\cap U_\eta({\ggg}_{\bbk})^{\text{ad}\,{\mmm}_{\bbk}}.
\end{array}\]
\end{prop}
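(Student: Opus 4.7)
I would tackle the two isomorphisms in sequence. For the first, $U_\eta(\ggg_\bbk,\bar e)\longrightarrow(Q_\chi^\eta)^{\text{ad}\,\mmm_\bbk}$, my plan is to imitate Theorem~\ref{W-C2} in the reduced setting by sending $\Theta\mapsto\Theta(\bar 1_\chi)$. Cyclicity of $Q_\chi^\eta$ on $\bar 1_\chi$ (via the identification (\ref{qqq})) forces injectivity; $\mmm_\bbk$-equivariance of $\Theta$ combined with $(x-\chi(x))\bar 1_\chi=0$ places the image into the Whittaker subspace; and each Whittaker vector $v$ produces a well-defined $\ggg_\bbk$-endomorphism $\bar z\cdot\bar 1_\chi\mapsto\bar z\cdot v$ because the left ideal $I_{\mmm_\bbk}$ annihilates $v$. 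The opposite composition on endomorphisms translates into the multiplication $(y_1+I_{\mmm_\bbk})(y_2+I_{\mmm_\bbk})=y_1y_2+I_{\mmm_\bbk}$ on $(Q_\chi^\eta)^{\text{ad}\,\mmm_\bbk}$, yielding the desired $\bbk$-algebra isomorphism.

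For the second isomorphism, after identifying $Q_\chi^\eta$ with $U_\eta(\ggg_\bbk)/I_{\mmm_\bbk}$ via (\ref{qqq}), I would check the congruence $[x,y]\equiv(x-\chi(x))y\pmod{I_{\mmm_\bbk}}$ for $x\in\mmm_\bbk$ (splitting into even and odd cases for $x$ and using that $\chi$ vanishes on $(\mmm_\bbk)_\bo$); this identifies the Whittaker condition on $\bar y$ with $[\mmm_\bbk,y]\subseteq I_{\mmm_\bbk}$ on a representative $y$. Since $\chi([\mmm_\bbk,\mmm_\bbk])=0$ (immediate from the isotropy of $\ggg_\bbk(-1)'$ together with a degree count against $e\in\ggg_\bbk(2)$), $I_{\mmm_\bbk}$ is ad-$\mmm_\bbk$-stable, so the quotient map is a morphism of $\mmm_\bbk$-modules. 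A short Leibniz-rule argument, combined with the restrictedness of $\mmm_\bbk$ (Lemma~\ref{restricted-m}), shows that every $a\in U_\eta(\ggg_\bbk)^{\text{ad}\,\mmm_\bbk}$ super-commutes with the whole of $U_\eta(\mmm_\bbk)$, so $I_{\mmm_\bbk}\cap U_\eta(\ggg_\bbk)^{\text{ad}\,\mmm_\bbk}$ is a two-sided ideal of $U_\eta(\ggg_\bbk)^{\text{ad}\,\mmm_\bbk}$, and the quotient map induces an injective algebra homomorphism
\[
U_\eta(\ggg_\bbk)^{\text{ad}\,\mmm_\bbk}\big/\bigl(I_{\mmm_\bbk}\cap U_\eta(\ggg_\bbk)^{\text{ad}\,\mmm_\bbk}\bigr)\longrightarrow(Q_\chi^\eta)^{\text{ad}\,\mmm_\bbk}.
\]

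The main obstacle is showing that this last map is \emph{surjective}, i.e., that every ad-$\mmm_\bbk$-invariant coset in $U_\eta(\ggg_\bbk)/I_{\mmm_\bbk}$ can be lifted to an honest invariant in $U_\eta(\ggg_\bbk)$. My plan is to adapt Premet's argument from \cite[Theorem~2.3(iv)]{P2} to the super setting, leaning on two structural facts established earlier: Lemma~\ref{restricted-m}, which makes $U_\eta(\mmm_\bbk)$ a finite-dimensional local Frobenius algebra with one-dimensional socle; and the PBW theorem, which realizes $U_\eta(\ggg_\bbk)$ as a free right $U_\eta(\mmm_\bbk)$-module on any choice of co-basis for $\mmm_\bbk$ in $\ggg_\bbk$. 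Starting from a representative $y$ with $[\mmm_\bbk,y]\subseteq I_{\mmm_\bbk}$, I would correct $y$ inductively along the Kazhdan filtration by an element of $I_{\mmm_\bbk}$, solving a one-step cocycle equation at each stage by means of the Frobenius trace on $U_\eta(\mmm_\bbk)$. The essentially new bookkeeping compared with the Lie algebra case is the tracking of Koszul signs $(-1)^{|x||y|}$ and the observation that, when $\sfr$ is odd, the distinguished odd vector $v_{(\sfr+1)/2}$ lies in $\mmm'_\bbk\setminus\mmm_\bbk$ and therefore plays no role in the lifting; neither feature obstructs the induction.
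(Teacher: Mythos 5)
Your overall architecture coincides with the paper's: the first isomorphism is the evaluation $\Theta\mapsto\Theta(\bar 1_\chi)$ exactly as in Theorem~\ref{W-C2}, and the second is obtained by identifying $Q_\chi^\eta$ with $U_\eta({\ggg}_{\bbk})/I_{{\mmm}_{\bbk}}$ via \eqref{qqq} and using the congruence $[\bar x,\bar u]\equiv(\bar x-\eta(\bar x))\bar u\ (\mathrm{mod}\ I_{{\mmm}_{\bbk}})$ to translate the invariance condition; this is precisely the paper's normalizer $B=\{\bar u\mid [I_{{\mmm}_{\bbk}},\bar u]\subseteq I_{{\mmm}_{\bbk}}\}$ together with the maps $\varphi$ and $\psi$ of \eqref{psi}, all modelled on \cite[Theorem 2.3(iv)]{P2}. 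Your injectivity discussion and the sign bookkeeping are fine.

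The gap is in the surjectivity step, and it sits exactly where the paper inserts Lemma~\ref{m_k free} and Lemma~\ref{free}. You propose to solve a one-step cocycle equation in $I_{{\mmm}_{\bbk}}$ by the Frobenius trace, citing as input the PBW realization of $U_\eta({\ggg}_{\bbk})$ as a free \emph{right} $U_\eta({\mmm}_{\bbk})$-module. But the cocycle $x\mapsto[x,y]$ takes values in $I_{{\mmm}_{\bbk}}$ viewed as a module for the \emph{adjoint} action of ${\mmm}_{\bbk}$, and its integrability is governed by $\text{Ext}^1$ over the restricted enveloping algebra of ${\mmm}_{\bbk}$ for that module structure; freeness under right multiplication says nothing about it. What actually drives the paper's proof is that $Q_\chi^\eta$ is free as a $U_\eta({\mmm}_{\bbk})$-module under $\text{ad}\,{\mmm}_{\bbk}$ (Lemma~\ref{free}): since $U_\eta({\mmm}_{\bbk})$ is a finite-dimensional Frobenius algebra, this makes $Q_\chi^\eta$ projective, so the sequence of $\text{ad}\,{\mmm}_{\bbk}$-modules with kernel $I_{{\mmm}_{\bbk}}$ and quotient $Q_\chi^\eta$ splits and invariants surject. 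That ad-freeness is not a PBW statement; it is deduced from Skryabin's theorem (Lemma~\ref{m_k free}, verified through the conditions (b1)--(b6) of \cite{Sk}) combined with the congruence \eqref{[]tomult}, which converts the adjoint action on the quotient into left multiplication by $\bar x-\eta(\bar x)$. Without this input, or an explicit proof that the relevant pieces of $I_{{\mmm}_{\bbk}}$ are ad-injective, your inductive correction along the Kazhdan filtration has no reason to close: the one-step equation need not be solvable. The remark that $\bar v_{\frac{\sfr+1}{2}}$ lies in ${\mmm}'_{\bbk}\setminus{\mmm}_{\bbk}$ when $\sfr$ is odd is correct but orthogonal to this issue.
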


For the proof of the above result, we first need the following observation.

\begin{lemma}\label{m_k free}
Let ${\ggg}_{\bbk}$ be a basic Lie superalgebra over ${\bbk}$ and $\chi\in({\ggg}_{\bbk})^*_{\bar0}$. Then for any $\eta\in\chi+(\mathfrak{m}_{\bbk}^\bot)_{\bar{0}}\subseteq({\ggg}_{\bbk})^*_{\bar0}$, every $U_\eta({\ggg}_{\bbk})$-module is $U_\eta(\mathfrak{m}_{\bbk})$-free.
\end{lemma}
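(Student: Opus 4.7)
The plan is to reduce the question to the structure theory of local Frobenius superalgebras. First observe that the hypothesis $\eta\in\chi+(\mmm_\bbk^\perp)_{\bar{0}}$, combined with the convention $\eta|_{(\mmm_\bbk)_{\bar{1}}}=\chi|_{(\mmm_\bbk)_{\bar{1}}}=0$, forces $\eta|_{\mmm_\bbk}=\chi|_{\mmm_\bbk}$. Hence the $p$-center relations defining $U_\eta(\mmm_\bbk)$ coincide with those defining $U_\chi(\mmm_\bbk)$, and the freeness statement is independent of the particular $\eta$ chosen from this affine space.

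Next I would establish that $U_\chi(\mmm_\bbk)$ is a local Frobenius superalgebra. For locality, $\mmm_\bbk\subseteq\bigoplus_{i\leqslant -1}\ggg_\bbk(i)$ is $p$-nilpotent because, by Lemma~\ref{restricted-m} and the cocharacter construction there, the $[p]$-operation sends $\ggg_\bbk(i)_{\bar{0}}$ into $\ggg_\bbk(pi)_{\bar{0}}$, so iterated $[p]$-powers eventually vanish. Moreover $\chi$ annihilates $[\mmm_\bbk,\mmm_\bbk]$: the only component that could land in $\ggg_\bbk(-2)$ (the unique graded piece where $\chi$ is non-zero) must come from $[\ggg_\bbk(-1)',\ggg_\bbk(-1)']$, and this vanishes by isotropy of $\ggg_\bbk(-1)'$ under $\langle\cdot,\cdot\rangle=\chi([\cdot,\cdot])$. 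Thus $N_{\mmm_\bbk}$ is the unique maximal two-sided ideal of codimension~$1$. The Frobenius property is the super analog of the classical fact that the reduced enveloping algebra of a finite-dimensional restricted Lie algebra is Frobenius. Over a finite-dimensional local Frobenius superalgebra, the classes of projective, injective and free modules coincide.

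Super PBW applied with an ordered homogeneous basis of $\ggg_\bbk$ beginning with a basis of $\mmm_\bbk$ then shows that $U_\eta(\ggg_\bbk)$ is free as a left $U_\chi(\mmm_\bbk)$-module. For an arbitrary $U_\eta(\ggg_\bbk)$-module $M$, I would adapt Premet's argument \cite[Lemma~2.3]{P2} to the super setting: pass to the associated graded $\operatorname{gr}M$ with respect to the Kazhdan filtration, where the action of $U_\chi(\mmm_\bbk)$ factors through the supercommutative truncated polynomial algebra $\operatorname{gr}U_\chi(\mmm_\bbk)$; prove by direct construction that $\operatorname{gr}M$ is free of rank $\dim_\bbk M^{\mmm_\bbk}$ over this graded algebra, where $M^{\mmm_\bbk}=\{m\in M\mid (x-\eta(x))m=0\text{ for all }x\in\mmm_\bbk\}$ is the space of Whittaker vectors; and finally lift freeness from $\operatorname{gr}M$ back to $M$ via the standard filtered-to-graded argument.

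The main obstacle is the rank identity $\dim_\bbk M=\dim_\bbk U_\chi(\mmm_\bbk)\cdot\dim_\bbk M^{\mmm_\bbk}$ at the associated-graded level. This count requires the full $U_\eta(\ggg_\bbk)$-structure and would proceed via the decomposition $\mmm_\bbk^\perp\cong[\mmm_\bbk',\bar e]\oplus\ggg_\bbk^{\bar f}$ (the modular analog of Lemma~\ref{m'}), which supplies explicit generators of $M$ over $U_\chi(\mmm_\bbk)$ built from Whittaker vectors acted upon by elements of $\ggg_\bbk^{\bar f}$. In the super context, even and odd generators of $\mmm_\bbk$ must be handled separately, contributing $p$-truncated polynomial and exterior factors respectively. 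The parity of $\sfr=\dim\ggg_\bbk(-1)_{\bar{1}}$ does not actually enter at this stage, because the odd element $\bar v_{(\sfr+1)/2}$ distinguishing $\mmm_\bbk'$ from $\mmm_\bbk$ is absent from $\mmm_\bbk$ itself --- the detecting-parity phenomenon is deferred to the subsequent analysis of the finite $W$-superalgebra proper.
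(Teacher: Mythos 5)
Your opening reductions are sound and agree with the paper's starting point: since $\eta\in\chi+(\mmm_\bbk^\perp)_{\bar 0}$ and both vanish on $(\mmm_\bbk)_{\bar 1}$, one has $\eta|_{\mmm_\bbk}=\chi|_{\mmm_\bbk}$, so $U_\eta(\mmm_\bbk)=U_\chi(\mmm_\bbk)$; moreover $\mmm_\bbk$ is $p$-nilpotent and $\chi([\mmm_\bbk,\mmm_\bbk])=0$ by the isotropy of $\ggg_\bbk(-1)'$, so $U_\chi(\mmm_\bbk)$ is a local Frobenius superalgebra over which free, projective and injective modules coincide, and freeness of $M$ is equivalent to the rank identity $\dim_\bbk M=\dim_\bbk U_\chi(\mmm_\bbk)\cdot\dim_\bbk M^{\mmm_\bbk}$.

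The gap is that this rank identity \emph{is} the lemma --- it is exactly the super Kac--Weisfeiler divisibility relative to $\mmm_\bbk$ --- and the mechanism you propose for it does not amount to an argument. The decomposition $\mmm^\perp=[\mmm',e]\oplus\ggg^f$ of Lemma \ref{m'} describes the Slodowy slice and serves to compute $\mathrm{gr}\,Q_\chi$; it does not supply, for an \emph{arbitrary} finite-dimensional $U_\eta(\ggg_\bbk)$-module $M$, a set of $U_\chi(\mmm_\bbk)$-generators ``built from Whittaker vectors acted upon by $\ggg_\bbk^{\bar f}$'' --- asserting that such elements generate $M$ freely presupposes the conclusion. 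There is also no canonical Kazhdan filtration on an arbitrary $M$, so ``pass to $\mathrm{gr}\,M$ and show it is free of rank $\dim M^{\mmm_\bbk}$'' is not a well-posed first step. The proof the paper actually invokes is Skryabin's: introduce the decreasing filtration $\ggg_\bbk^{(k)}=\sum_{l\geq k}\ggg_\bbk(-l)$, verify the axioms \cite[\S1(b1)--(b6)]{Sk} --- which encode precisely the nondegeneracy of $\chi([\cdot,\cdot])$ between complementary graded pieces together with the compatibility $\eta|_{\mmm_\bbk}=\chi|_{\mmm_\bbk}$ --- and then run the inductive straightening argument of \cite[Theorem 1.3]{Sk}, carried out in the super setting in \cite[Proposition 4.2]{WZ}. (Premet's treatment in \cite{P2} likewise defers this step to \cite{P1} and to Skryabin rather than to an associated-graded computation.) Your closing observation is correct, however: only $\mmm_\bbk$, not $\mmm'_\bbk$, intervenes here, so the parity of $\sfr$ plays no role in this particular lemma.
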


\begin{proof} We define a decreasing filtration $\{\ggg_\bbk^{(k)}\mid k\in\bbz\}$ of $\ggg_\bbk$
by setting $\ggg_\bbk^{(k)}:=\sum_{l\geq k}\ggg_\bbk(-l)$. As $\chi|_{\mathfrak{m}_{\bbk}}=\eta|_{\mathfrak{m}_{\bbk}}$,  by a straightforward verification one knows that the conditions in \cite[\S1(b1)-(b6)]{Sk} satisfy with respect to $\eta$. The remaining part for the proof can be fulfilled in the same way as that for \cite[Theorem 1.3]{Sk} and \cite[Proposition 4.2]{WZ}, thus will be omitted here.
\end{proof}

As an immediate corollary of Lemma~\ref{m_k free}, we can conclude that

\begin{lemma}\label{free}
$Q_{\chi}^\eta$ is a free $U_\eta({\mmm}_{\bbk})$-module under the action of ad\,${\mmm}_{\bbk}$.
\end{lemma}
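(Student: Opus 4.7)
The statement is labelled an immediate corollary of Lemma~\ref{m_k free}, which (applied to $Q_\chi^\eta$ as a $U_\eta(\ggg_\bbk)$-module) already delivers freeness over $U_\eta(\mmm_\bbk)$ under the restriction of the left action. My plan is to transfer this freeness to the adjoint action via a canonical algebraic twist.

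First I would compute the adjoint action of $y\in\mmm_\bbk$ on a coset $u + I_{\mmm_\bbk}\in Q_\chi^\eta = U_\eta(\ggg_\bbk)/I_{\mmm_\bbk}$. From
\[
\text{ad}(y)(u + I_{\mmm_\bbk}) = \bigl(yu - (-1)^{|y||u|}uy\bigr) + I_{\mmm_\bbk}
\]
and the congruence $uy \equiv \chi(y)u \pmod{I_{\mmm_\bbk}}$ (because $y - \chi(y) \in N_{\mmm_\bbk}$ forces $u(y - \chi(y)) \in I_{\mmm_\bbk}$), I would obtain, uniformly in the parity of $y$ (using $\chi|_{(\mmm_\bbk)_{\bar 1}} = 0$), the identity $\text{ad}(y)(u + I_{\mmm_\bbk}) = (y - \chi(y))u + I_{\mmm_\bbk}$. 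Thus $\text{ad}(y)$ on $Q_\chi^\eta$ coincides with left multiplication by $y - \chi(y)$. The well-definedness of ad on the quotient relies on $\chi|_{[\mmm_\bbk, \mmm_\bbk]} = 0$, which follows from the isotropy of $\ggg_\bbk(-1)'$ under $\langle\cdot,\cdot\rangle$.

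Next I would argue that $\sigma\colon y\mapsto y - \chi(y)$ extends to an algebra automorphism of $U(\mmm_\bbk)$ and descends to an isomorphism $\bar\sigma\colon U_0(\mmm_\bbk) \xrightarrow{\sim} U_\chi(\mmm_\bbk) = U_\eta(\mmm_\bbk)$ of reduced enveloping algebras. The Lie super-homomorphism property follows from $\chi|_{[\mmm_\bbk, \mmm_\bbk]} = 0$, and the descent reduces (using $(y - \chi(y))^p = y^p - \chi(y)^p$ in characteristic~$p$) to proving $\chi(y^{[p]}) = 0$ for every $y \in (\mmm_\bbk)_{\bar 0}$. This vanishing would come from the compatibility of the $[p]$-map with the Dynkin grading established in Lemma~\ref{restricted-m}: a homogeneous $y \in \ggg_\bbk(-i)$ has $y^{[p]} \in \ggg_\bbk(-pi)$, a component of weight $\le -p \le -3$ on which $\chi$ vanishes; the inhomogeneous case is then handled by the $p$-power expansion $(a+b)^{[p]} = a^{[p]} + b^{[p]} + \sum s_i(a,b)$ together with a weight count showing every summand has $\text{ad}\,h$-weight at most $-p$.

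With $\bar\sigma$ in hand, the conclusion is straightforward: the adjoint action of $U_\eta(\mmm_\bbk)$ on $Q_\chi^\eta$ is, via $\bar\sigma$, the same as the left multiplication action, which is free by Lemma~\ref{m_k free}; hence $Q_\chi^\eta$ is free over $U_\eta(\mmm_\bbk)$ under ad. The main obstacle is verifying the grading-based vanishing $\chi(y^{[p]}) = 0$ for arbitrary $y \in (\mmm_\bbk)_{\bar 0}$; the rest of the argument is routine bookkeeping.
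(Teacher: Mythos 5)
Your proof is correct and follows essentially the same route as the paper: the key identity $\mathrm{ad}(\bar x)(\bar u)\equiv(\bar x-\eta(\bar x))\bar u \pmod{I_{\mmm_\bbk}}$ is exactly the congruence \eqref{[]tomult} in the paper's proof, after which both arguments invoke Lemma~\ref{m_k free} to transfer freeness from left multiplication to the adjoint action. The only difference is that you spell out the step the paper calls ``immediate'' — namely that the shift $y\mapsto y-\chi(y)$ induces an isomorphism of reduced enveloping algebras, using $\chi([\mmm_\bbk,\mmm_\bbk])=0$ and $\chi(y^{[p]})=0$ via the grading of Lemma~\ref{restricted-m} — which is a correct and welcome elaboration rather than a different method.
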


\begin{proof} It is easy to verify that
\begin{equation}\label{[]tomult}
[\bar x,\bar u]\equiv(\bar x-\eta(\bar x))\bar u\quad(\text{mod}\,I_{{\mmm}_{\bbk}})
\end{equation}
for all $\bar x\in{\mmm}_{\bbk}$ and $\bar u\in Q_{\chi}^\eta$.
Since Lemma~\ref{m_k free} shows that every $U_\eta({\ggg}_{\bbk})$-module is $U_\eta({\mmm}_{\bbk})$-free under the action of left multiplication, it is immediate from \eqref{[]tomult} that $U_\eta({\ggg}_{\bbk})$-module $Q_\chi^\eta$ is $U_\eta({\mmm}_{\bbk})$-free under the action of ad\,${\mmm}_{\bbk}$.
\end{proof}

\textsc{Proof} of Proposition \ref{invariant}: With Lemma~\ref{free}, the proof follows the same strategy in the Lie algebra case (cf. \cite[Theorem 2.3(iv)]{P2}).
Take $B$ to be the normalizer of $I_{\mmm_\bbk}$ in $U_\eta(\ggg_\bbk)$, i.e.  $B:=\{\bar u\in U_\eta({\ggg}_{\bbk})\mid I_{{\mmm}_{\bbk}} \bar u\subseteq I_{{\mmm}_{\bbk}}\}=\{\bar u\in U_\eta({\ggg}_{\bbk})\mid [I_{{\mmm}_{\bbk}},\bar u]\subseteq I_{{\mmm}_{\bbk}}\}$. It is immediate by the definition and \eqref{qqq} that $B/I_{{\mmm}_{\bbk}}\cong (U_\eta({\ggg}_{\bbk})/I_{{\mmm}_{\bbk}})^{\text{ad}\,{\mmm}_{\bbk}}\cong(Q_{\chi}^\eta)^{\text{ad}\,{\mmm}_{\bbk}}$ as $\bbk$-algebras.  Moreover,  we can construct the following isomorphisms of ${\bbk}$-algebras:
\begin{equation}\label{psi}
\begin{array}{lcll}
\varphi:& \text{End}_{{\ggg}_{\bbk}}Q_{\chi}^\eta&\longrightarrow& (B/I_{{\mmm}_{\bbk}})^{\text{op}};\\
\psi:& B/I_{{\mmm}_{\bbk}}&\longrightarrow& U_\eta({\ggg}_{\bbk})^{\text{ad}\,{\mmm}_{\bbk}}/I_{{\mmm}_{\bbk}}\cap U_\eta({\ggg}_{\bbk})^{\text{ad}\,{\mmm}_{\bbk}}.
\end{array}
\end{equation}
From all above, the proposition follows. The proof is completed.
\vskip0.3cm

\subsection{The Morita equivalence theorem}

In the concluding subsection, we will introduce the Morita equivalence theorem between reduced enveloping algebras and the corresponding reduced $W$-superalgebras for basic Lie superalgebras, of which the basic version has been provided in \cite{WZ}.
This Morita equivalence will be important to our sequent arguments.  Given a left $U_{\eta}({\ggg}_{\bbk})$-module $M$, define
$$M^{{\mmm}_{\bbk}}:=\{v\in M\mid I_{{\mmm}_{\bbk}}.v=0\}.$$
It follows from Proposition~\ref{invariant} that $U_{\eta}({\ggg}_{\bbk},\bar{e})$ can be identified with the $\bbk$-algebra $U_\eta({\ggg}_{\bbk})^{\text{ad}{\mmm}_{\bbk}}/ U_\eta({\ggg}_{\bbk})^{\text{ad}{\mmm}_{\bbk}}\cap I_{{\mmm}_{\bbk}}$. Therefore, any left $U_\eta({\ggg}_{\bbk})^{\text{ad}{\mmm}_{\bbk}}$-module can be considered as a $U_{\eta}({\ggg}_{\bbk},\bar{e})$-module with the trivial action of the ideal $U_\eta({\ggg}_{\bbk})^{\text{ad}{\mmm}_{\bbk}}\cap I_{{\mmm}_{\bbk}}$.

Now we are in a position to introduce the main result of this part.

\begin{theorem} \label{matrix} The following statements hold.
\begin{itemize}
\item[(1)] (\cite[Theorem 4.4]{WZ} and its generalization)
$Q_\chi^\eta$ is a projective $U_\eta(\ggg_\bbk)$-module
and there is an isomorphism of $\bbk$-algebras
$$U_\eta({\ggg}_{\bbk})\cong\text{Mat}_\delta(U_\eta({\ggg}_{\bbk},\bar{e})),$$
where $\delta:=\dim U_{\eta}({\mmm}_{\bbk})=\dim U_{\chi}({\mmm}_{\bbk})$, and $\text{Mat}_\delta(U_\eta({\ggg}_{\bbk},\bar{e}))$ denotes the matrix algebra of $U_\eta({\ggg}_{\bbk},\bar{e})$.
\item[(2)] \label{reducedfunctors}
The functors
$$U_\eta({\ggg}_{\bbk})\text{-mod}\longrightarrow U_{\eta}({\ggg}_{\bbk},\bar{e})\text{-mod},\qquad M\mapsto M^{{\mmm}_{\bbk}}$$
and
$$U_{\eta}({\ggg}_{\bbk},\bar{e})\text{-mod}\longrightarrow U_\eta({\ggg}_{\bbk})\text{-mod},\qquad V\mapsto U_\eta({\ggg}_{\bbk})\otimes_{U_\eta({\ggg}_{\bbk})^
{\text{ad}\,{\mmm}_{\bbk}}}V$$
are mutually inverse category equivalences.
\end{itemize}
\end{theorem}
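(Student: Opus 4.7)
The plan is to deduce both parts from classical Morita theory applied to $Q_\chi^\eta$ viewed as a projective generator of $U_\eta(\ggg_\bbk)$-mod, whose opposite endomorphism algebra is $U_\eta(\ggg_\bbk,\bar e)$ by Definition \ref{reduced W}. The principal inputs are Lemma \ref{m_k free} (Skryabin-type freeness of every $U_\eta(\ggg_\bbk)$-module over $U_\eta(\mmm_\bbk)$) and the identification $(Q_\chi^\eta)^{\text{ad}\,\mmm_\bbk}\cong U_\eta(\ggg_\bbk,\bar e)$ from Proposition \ref{invariant}.

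I would first verify that $Q_\chi^\eta$ is projective. Frobenius reciprocity for the induced module yields
\[\text{Hom}_{U_\eta(\ggg_\bbk)}(Q_\chi^\eta,M)\;\cong\;\text{Hom}_{U_\eta(\mmm_\bbk)}(\bbk_\eta,M)\;=\;M^{\mmm_\bbk}\]
for every $M$ in $U_\eta(\ggg_\bbk)$-mod. Since $\mmm_\bbk$ is $p$-nilpotent (as noted just before Lemma \ref{restricted-m}), the restricted enveloping algebra $U_\eta(\mmm_\bbk)$ is local Frobenius with one-dimensional socle $\bbk_\eta$. Combined with Lemma \ref{m_k free}, every short exact sequence in $U_\eta(\ggg_\bbk)$-mod splits after restriction to $U_\eta(\mmm_\bbk)$, so the invariants functor $(-)^{\mmm_\bbk}$ is exact, proving projectivity. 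The generator property is equally immediate: for any simple $L$ in $U_\eta(\ggg_\bbk)$-mod, freeness of $L$ over $U_\eta(\mmm_\bbk)$ forces $L^{\mmm_\bbk}\neq 0$, and any non-zero Whittaker vector supplies a surjection $Q_\chi^\eta\twoheadrightarrow L$.

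Granted the projective-generator status, part (2) follows from classical Morita theory: the pair $M\mapsto M^{\mmm_\bbk}$ and $V\mapsto Q_\chi^\eta\otimes_{U_\eta(\ggg_\bbk,\bar e)}V$ are mutually quasi-inverse equivalences. To match the displayed form of the right-hand functor, I would use Proposition \ref{invariant} together with the cyclicity $Q_\chi^\eta=U_\eta(\ggg_\bbk).1_\chi$ to construct a natural isomorphism $U_\eta(\ggg_\bbk)\otimes_{U_\eta(\ggg_\bbk)^{\text{ad}\,\mmm_\bbk}}V\cong Q_\chi^\eta\otimes_{U_\eta(\ggg_\bbk,\bar e)}V$ via $u\otimes v\mapsto u.1_\chi\otimes v$; this is well-defined because the ideal $I_{\mmm_\bbk}\cap U_\eta(\ggg_\bbk)^{\text{ad}\,\mmm_\bbk}$ annihilates $1_\chi$.

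For (1), the double-centralizer theorem for projective generators gives $U_\eta(\ggg_\bbk)\cong\text{End}_{U_\eta(\ggg_\bbk,\bar e)^{\text{op}}}(Q_\chi^\eta)$, and upgrading this to $\text{Mat}_\delta(U_\eta(\ggg_\bbk,\bar e))$ reduces to showing $Q_\chi^\eta$ is free of rank $\delta$ as a right $U_\eta(\ggg_\bbk,\bar e)$-module. A dimension count settles this: from the induced description $\dim Q_\chi^\eta=\dim U_\eta(\ggg_\bbk)/\delta$, while freeness over $U_\eta(\mmm_\bbk)$ combined with the one-dimensional socle of $U_\eta(\mmm_\bbk)$ gives $\dim(Q_\chi^\eta)^{\mmm_\bbk}=\dim Q_\chi^\eta/\delta$, and the latter equals $\dim U_\eta(\ggg_\bbk,\bar e)$ by Proposition \ref{invariant}. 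Hence $\dim Q_\chi^\eta=\delta\cdot\dim U_\eta(\ggg_\bbk,\bar e)$, and any right-module epimorphism $U_\eta(\ggg_\bbk,\bar e)^{\oplus\delta}\twoheadrightarrow Q_\chi^\eta$ must be an isomorphism. The chief obstacle I anticipate is establishing the local Frobenius property with one-dimensional socle for $U_\eta(\mmm_\bbk)$ in the super setting, which requires adapting the standard restricted Lie algebra argument to account for the odd part of $\mmm_\bbk$; this rests ultimately on the $p$-nilpotency already noted.
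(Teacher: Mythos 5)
Your overall strategy (establish that $Q_\chi^\eta$ is a projective generator via Lemma \ref{m_k free} and the locality of $U_\eta(\mmm_\bbk)$, then invoke Morita theory) is the same one underlying the sources to which the paper defers for this theorem (\cite[Theorem 4.4]{WZ} and \cite[Theorems 2.3--2.4]{P2}), and your arguments for projectivity, for the generator property, and for part (2) are sound. The gap is in your derivation of the matrix-algebra isomorphism in (1). You reduce it to showing that $Q_\chi^\eta$ is free of rank $\delta$ as a right $U_\eta(\ggg_\bbk,\bar e)$-module and settle this by a dimension count, asserting that ``any right-module epimorphism $U_\eta(\ggg_\bbk,\bar e)^{\oplus\delta}\twoheadrightarrow Q_\chi^\eta$ must be an isomorphism.'' The count $\dim Q_\chi^\eta=\delta\cdot\dim U_\eta(\ggg_\bbk,\bar e)$ is correct, but it only shows that such an epimorphism, \emph{if it exists}, is bijective; you never produce one, i.e.\ you never exhibit $\delta$ generators of $Q_\chi^\eta$ over $U_\eta(\ggg_\bbk,\bar e)$, and doing so directly would already require the PBW description of $U_\eta(\ggg_\bbk,\bar e)(1_\chi)$ from \S\ref{rew}. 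A finitely generated projective module over a finite-dimensional algebra whose dimension equals that of a free module need not be free (consider $\bbk^{\oplus 5}\oplus 0$ over $\bbk\times\text{Mat}_2(\bbk)$), so the dimension count alone cannot close this step.

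The missing ingredient is the filtration argument: since $U_\eta(\mmm_\bbk)$ is local with unique simple module $\bbk_\eta$ (from the $p$-nilpotency of $\mmm_\bbk$), its regular representation admits a chain of right ideals of length $\delta$ with all subquotients isomorphic to $\bbk_\eta$; multiplying by $U_\eta(\ggg_\bbk)$, which is free as a right $U_\eta(\mmm_\bbk)$-module, yields a filtration of $U_\eta(\ggg_\bbk)$ by left submodules with all $\delta$ subquotients isomorphic to $Q_\chi^\eta$, and this filtration splits because $Q_\chi^\eta$ is projective. Hence $U_\eta(\ggg_\bbk)\cong (Q_\chi^\eta)^{\oplus\delta}$ as left modules, and then
$U_\eta(\ggg_\bbk)\cong\text{End}_{U_\eta(\ggg_\bbk)}\bigl((Q_\chi^\eta)^{\oplus\delta}\bigr)^{\text{op}}\cong\text{Mat}_\delta\bigl(U_\eta(\ggg_\bbk,\bar e)\bigr)$
directly, with no need to discuss right-module freeness of $Q_\chi^\eta$ at all. (Your secondary worry --- the local Frobenius property of $U_\eta(\mmm_\bbk)$ with one-dimensional socle in the super setting --- is genuinely used in your count $\dim(Q_\chi^\eta)^{\text{ad}\,\mmm_\bbk}=\dim Q_\chi^\eta/\delta$; it is established in \cite{WZ} and is a citable input rather than a further gap.)
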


\begin{proof} (1) \cite[Theorem 4.4]{WZ} is the special version when the $p$-character $\eta$ is taken to be  $\chi$.  Thanks to Lemma \ref{m_k free}, the arguments in the proof of \cite[Theorem 4.4]{WZ} can be carried out in our case.

(2)  Since every $U_\eta({\ggg}_{\bbk})$-module is $U_\eta({\mmm}_{\bbk})$-free under the action of left multiplication by Lemma \ref{m_k free}, the second statement can be proved in the same way as \cite[Theorem 2.4]{P2} for the Lie algebra case after substituting the discussion in \cite[\S2.2]{P2} for \cite[Proposition 4.2]{WZ}.
\end{proof}

\section{The PBW structure of reduced $W$-superalgebras over $\bbk$}\label{rew}

This section is one of the main parts of the paper. In this section we will study the PBW construction theory of reduced $W$-superalgebra $U_\chi({\ggg}_{\bbk},e)$ associated to a basic Lie superalgebra ${\ggg}_{\bbk}$ over $\bbk$, an algebraically closed field of prime characteristic $p$.

Via lots of computations, we obtain some commutating relations, and also the structure of leading terms of the PBW basis of $U_\chi({\ggg}_{\bbk},e)$, which enables us to establish the PBW theorem (see Theorem \ref{reduced Wg}).  From such a PBW structure of $U_\chi(\ggg_\bbk,e)$, we will understand more on the structure of finite $W$-superalgebras over $\bbc$ presented in the next section.  
By these structural information on reduced $W$-superalgebras,  one can conclude that the construction of $U_\chi({\ggg}_{\bbk},e)$ critically depends on the parity of dim\,${\ggg}_{\bbk}(-1)_{\bar1}$, which is significantly different from the finite $W$-algebra case.

We mainly follow Premet's strategy on finite $W$-algebras \cite[\S3]{P2}, with a few modifications. Compared with the Lie algebra case, one will see that the emergence of odd parts in the Lie superalgebra ${\ggg}_{\bbk}$ makes the situation much more complicated.

\subsection{Notations and conventions}\label{modpconventions} We maintain the notations as the previous section, especially as in \S\ref{modulo p} and \S\ref{modulo p2}. 
Here we still list some conventions frequently used afterwards.
\subsubsection{} Recall that the elements in the Lie superalgebra ${\ggg}_{\bbk}$ are obtained by ``modular $p$ reduction'' from the ones in the $A$-algebra ${\ggg}_A$. Denote by
$\bar{x}=x\otimes1\in{\ggg}_{\bbk}$ for each $x\in{\ggg}_A$.
To ease notation we identify $e,f,h$ with the nilpotent elements $\bar{e}=e\otimes1,~\bar{f}=f\otimes1$ and $\bar{h}=h\otimes1$ in ${\ggg}_{\bbk}$.
\subsubsection{} Take typical basis elements in the corresponding subspaces of $\ggg_\bbk$ as follows
\[\begin{array}{ll}
\bar x_1,\cdots,\bar x_l\in({\ggg}^e_{\bbk})_{\bar{0}},&\qquad \bar x_{l+1},\cdots,\bar x_m\in\bigoplus\limits_{j\geqslant 2}[f,({\ggg}_{\bbk}(j))_{\bar0}];\\
\bar y_1,\cdots,\bar y_q\in({\ggg}^e_{\bbk})_{\bar{1}},&\qquad \bar y_{q+1},\cdots,\bar y_n\in\bigoplus\limits_{j\geqslant 2}[f,({\ggg}_{\bbk}(j))_{\bar1}];\\
\bar u_1,\cdots,\bar u_s\in({\ggg}_{\bbk}(-1)_{\bar{0}}^{\prime})^{\bot}\subseteq{\ggg}_{\bbk}(-1)_{\bar{0}},
&\qquad \bar u_{s+1},\cdots,\bar u_{2s}\in{\ggg}_{\bbk}(-1)_{\bar{0}}';\\
\bar v_1,\cdots,\bar v_t\in({\ggg}_{\bbk}(-1)_{\bar{1}}^{\prime})^{\bot}\subseteq{\ggg}_{\bbk}(-1)_{\bar{1}},
&\qquad \bar v_{t+1},\cdots,\bar v_{\sfr}\in{\ggg}_{\bbk}(-1)_{\bar{1}}'.
\end{array}
\]
Here we still take the notation $t=\lfloor\frac{\sfr}{2}\rfloor$.
\subsubsection{}
Given an element $\bar x\in{\ggg}_{\bbk}(i)$, we denote its weight (with the action of $\text{ad}\,h$) by $\text{wt}(\bar x)=i$. For $k\in\mathbb{Z}_+$, define$$\Lambda_k:=\{(i_1,\cdots,i_k)\mid i_j\in\mathbb{Z}_+,~0\leqslant  i_j\leqslant  p-1\},~\Lambda'_k:=\{(i_1,\cdots,i_k)\mid i_j\in\{0,1\}\}$$ with $1\leqslant j\leqslant k$.
Set $\mathbf{e}_i=(\delta_{i1},\cdots,\delta_{ik})$. For $\mathbf{i}=(i_1,\cdots,i_k)$ in $\Lambda_k\,$ or $\Lambda'_k$, set $|\mathbf{i}|=i_1+\cdots+i_k$.
\subsubsection{}
Given  $\mathbf{a}=(a_1,\cdots,a_m)\in\Lambda_m,~\mathbf{b}=(b_1,\cdots,b_n)\in\Lambda'_n,~\mathbf{c}=(c_1,\cdots,c_s)\in\Lambda_s,~\mathbf{d}=(d_1,\cdots,d_t)\in\Lambda'_t$, define$$\bar x^{\mathbf{a}}\bar y^\mathbf{b}\bar u^\mathbf{c}\bar v^\mathbf{d}:=\bar x_1^{a_1}\cdots \bar x_m^{a_m}\bar y_1^{b_1}\cdots \bar y_n^{b_n}\bar u_1^{c_1}\cdots \bar u_s^{c_s}\bar v_1^{d_1}\cdots \bar v_t^{d_t}.$$
By the discussion preceding Definition~\ref{reduced W}, one knows that there is an isomorphism of ${\ggg}_{\bbk}$-modules $Q_\chi^\chi\cong U_\chi({\ggg}_{\bbk})\otimes_{U_\chi({\mmm}_{\bbk})}\bar 1_\chi$, then all tensor vectors from monomials $\bar x^{\mathbf{a}}\bar y^\mathbf{b}\bar u^\mathbf{c}\bar v^\mathbf{d}\otimes\bar 1_\chi$ form a basis of $Q_\chi^\chi$.
\subsubsection{}
We can assume that all the elements given above are homogeneous under the action of ad\,$h$, i.e. $\bar x_1\in{\ggg}_{\bbk}(k_1)_{\bar{0}},~\cdots,~\bar x_m\in{\ggg}_{\bbk}(k_m)_{\bar{0}},~\bar y_1\in{\ggg}_{\bbk}(k'_1)_{\bar{1}}, \cdots,~\bar y_n\in{\ggg}_{\bbk}(k'_n)_{\bar{1}}$. Define
$$|(\mathbf{a},\mathbf{b},\mathbf{c},\mathbf{d})|_e=\sum_{i=1}^ma_i(k_i+2)+\sum_{i=1}^nb_i(k'_i+2)+\sum_{i=1}^sc_i+\sum_{i=1}^td_i.$$
Say that $\bar x^{\mathbf{a}}\bar y^\mathbf{b}\bar u^\mathbf{c}\bar v^\mathbf{d}$ has $e$-degree $|(\mathbf{a},\mathbf{b},\mathbf{c},\mathbf{d})|_e$ and write $\text{deg}_e(\bar x^{\mathbf{a}}\bar y^\mathbf{b}\bar u^\mathbf{c}\bar v^\mathbf{d})=|(\mathbf{a},\mathbf{b},\mathbf{c},\mathbf{d})|_e$.  The $e$-degree defined above is compatible with the Kazhdan degree in \S\ref{Kfil}. Note that
\begin{equation}\label{dege}
\text{deg}_e(\bar x^{\mathbf{a}}\bar y^\mathbf{b}\bar u^\mathbf{c}\bar v^\mathbf{d})=\text{wt}(\bar x^{\mathbf{a}}\bar y^\mathbf{b}\bar u^\mathbf{c}\bar v^\mathbf{d})+2\text{deg}(\bar x^{\mathbf{a}}\bar y^\mathbf{b}\bar u^\mathbf{c}\bar v^\mathbf{d}),
\end{equation}
where $\text{wt}(\bar x^{\mathbf{a}}\bar y^\mathbf{b}\bar u^\mathbf{c}\bar v^\mathbf{d})=(\sum\limits_{i=1}^mk_ia_i)+(\sum\limits_{i=1}^nk'_ib_i)-|\mathbf{c}|-|\mathbf{d}|$\, and $\text{deg}(\bar x^{\mathbf{a}}\bar y^\mathbf{b}\bar u^\mathbf{c}\bar v^\mathbf{d})=|\mathbf{a}|+|\mathbf{b}|+|\mathbf{c}|+|\mathbf{d}|$ are the weight and the standard degree of $\bar x^{\mathbf{a}}\bar y^\mathbf{b}\bar u^\mathbf{c}\bar v^\mathbf{d}$, respectively.
\subsubsection{}\label{maximalandweight}
Recall that any non-zero element $\bar h\in U_\chi({\ggg}_{\bbk},e)$ is uniquely determined by its value on $\bar h(\bar 1_\chi)\in Q_\chi^\chi$. Write$$\bar h(\bar 1_\chi)=(\sum\limits_{|(\mathbf{a},\mathbf{b},\mathbf{c},\mathbf{d})|_e\leqslant  n(\bar h)}\lambda_{\mathbf{a},\mathbf{b},\mathbf{c},\mathbf{d}}
\bar x^{\mathbf{a}}\bar y^\mathbf{b}\bar u^\mathbf{c}\bar v^\mathbf{d})\otimes\bar 1_\chi,$$ where $n(\bar h)$ is the highest $e$-degree of the terms in the linear expansion of $\bar h(\bar 1 _\chi)$, and $\lambda_{\mathbf{a},\mathbf{b},\mathbf{c},\mathbf{d}}\neq0$ for at least one $(\mathbf{a},\mathbf{b},\mathbf{c},\mathbf{d})$ with $|(\mathbf{a},\mathbf{b},\mathbf{c},\mathbf{d})|_e=n(\bar h)$.

For $k\in\mathbb{Z}_+$, put $\Lambda^{k}_{\bar h}=\{(\mathbf{a},\mathbf{b},\mathbf{c},\mathbf{d})\mid\lambda_{\mathbf{a},\mathbf{b},\mathbf{c},\mathbf{d}}\neq0\mbox{ and }|(\mathbf{a},\mathbf{b},\mathbf{c},\mathbf{d})|_e=k\}$ and set
\begin{align}\label{MaxLambda}
\Lambda^{\text{max}}_{\bar h}:=
\{ (\mathbf{a},\mathbf{b},\mathbf{c},\mathbf{d})\in\Lambda^{n(\bar h)}_{\bar h}\mid
    \text{wt}(\bar x^{\mathbf{a}}\bar y^\mathbf{b}\bar u^\mathbf{c}\bar v^\mathbf{d}) \mbox{ takes its maximum value}\}.
    \end{align}

 This maximum value mentioned in (\ref{MaxLambda}) will be denoted by $N(\bar h)$ (simply by $N$ if the context is clear).

 A  monomial in the ${\bbk}$-span of  $\bar h(\bar 1_\chi)\in Q_\chi^\chi$ both with highest $e$-degree and with maximum weight will be called {\bf the leading term} of $\bar h(\bar 1_\chi)$.

\subsection{Some commutating relations}
Before starting the discussion on the construction theory of reduced $W$-superalgebra $U_\chi({\ggg}_{\bbk},e)$, we will first formulate some lemmas on commutating relations for the elements in the basis of $U_\chi({\ggg}_{\bbk})$.

\begin{lemma}\label{commutative relations k1}
Let $\bar w\in U_\chi({\ggg}_{\bbk})_{i}$ ($i\in\mathbb{Z}_2$) be a $\mathbb{Z}_2$-homogeneous element, then we have
$$\bar w\cdot \bar x^{\mathbf{a}}\bar y^\mathbf{b}\bar u^\mathbf{c}\bar v^\mathbf{d}=\sum_{\mathbf{i}\in\Lambda_{m}}\sum_{\mathbf{j}\in\Lambda_{n}'}\left(\begin{array}{@{\hspace{0pt}}c@{\hspace{0pt}}} \mathbf{a}\\ \mathbf{i}\end{array}\right)\bar x^{\mathbf{a}-\mathbf{i}}\bar y^{\mathbf{b}-\mathbf{j}}\cdot[\bar w\bar x^{\mathbf{i}}\bar y^{\mathbf{j}}]\cdot \bar u^\mathbf{c}\bar v^\mathbf{d},$$
where $\mathbf{a}\choose\mathbf{i}$$=\prod\limits_{l'=1}^m$$a_{l'}\choose i_{l'}$ and $$[\bar w\bar x^{\mathbf{i}}\bar y^{\mathbf{j}}]=k_{1,b_1,j_1}\cdots k_{n,b_n,j_n}(-1)^{|\mathbf{i}|}(\text{ad}\bar y_n)^{j_n}\cdots(\text{ad}\bar y_1)^{j_1}(\text{ad}\bar x_m)^{i_m}\cdots(\text{ad}\bar x_1)^{i_1}(\bar w),$$
in which the coefficients $k_{1,b_1,j_1},\cdots,k_{n,b_n,j_n}\in {\bbk}$ (note that $b_1,\cdots,b_n,j_1,\cdots,j_n\in\{0,1\}$) are defined by
$$k_{t',0,0}=1, k_{t',0,1}=0, k_{t',1,0}=(-1)^{|\bar w|+j_1+\cdots+j_{{t'}-1}}, k_{t',1,1}=(-1)^{|\bar w|+1+j_1+\cdots+j_{{t'}-1}}$$
with $1\leqslant {t'}\leqslant n$ ($j_0$ is interpreted as $0$).
\end{lemma}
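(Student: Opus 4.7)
The plan is to prove the formula by iterating the basic super-commutator identity
$$\bar w \cdot z = (-1)^{|\bar w||z|} z \cdot \bar w + [\bar w, z]$$
one factor at a time, moving $\bar w$ rightward through the product $\bar x_1^{a_1}\cdots \bar x_m^{a_m}\bar y_1^{b_1}\cdots \bar y_n^{b_n}$. The factors $\bar u^{\mathbf{c}}\bar v^{\mathbf{d}}$ sit on the right of everything on both sides of the claimed identity, so they remain passive spectators and can be ignored until the very end, after which they are multiplied back on the right.

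First I would record two basic one-variable identities. For an even generator $\bar x_k$ and any intermediate element $\bar w''$ a straightforward induction on $a$ from $\bar w''\bar x_k = \bar x_k \bar w'' - (\mathrm{ad}\,\bar x_k)(\bar w'')$ gives
$$\bar w''\cdot \bar x_k^{a} \;=\; \sum_{i=0}^{a} \binom{a}{i}(-1)^{i}\, \bar x_k^{a-i}\,(\mathrm{ad}\,\bar x_k)^{i}(\bar w'').$$
For an odd generator $\bar y_k$ with $b_k\in\{0,1\}$, a direct super-commutator calculation gives
$$\bar w''\cdot \bar y_k^{b_k} \;=\; \sum_{j_k\leq b_k} k_{k,b_k,j_k}\cdot \bar y_k^{b_k-j_k}(\mathrm{ad}\,\bar y_k)^{j_k}(\bar w''),$$
with signs matching those in the statement and depending on the current parity $|\bar w''|$.

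Next I would iterate these two identities in turn, processing the factors of the monomial in the order $\bar x_1,\bar x_2,\ldots,\bar x_m,\bar y_1,\ldots,\bar y_n$. At each stage the $\mathrm{ad}$-operators already produced by earlier steps are carried along inside the argument of $\bar w$, and because each passage through a factor creates powers of that factor sitting to the left of the running intermediate element, we end up with a sum indexed by $(\mathbf{i},\mathbf{j})\in\Lambda_m\times\Lambda'_n$ of terms of the form
$$\binom{\mathbf{a}}{\mathbf{i}}\cdot(\text{sign})\cdot \bar x^{\mathbf{a}-\mathbf{i}}\bar y^{\mathbf{b}-\mathbf{j}}\cdot (\mathrm{ad}\,\bar y_n)^{j_n}\cdots(\mathrm{ad}\,\bar y_1)^{j_1}(\mathrm{ad}\,\bar x_m)^{i_m}\cdots(\mathrm{ad}\,\bar x_1)^{i_1}(\bar w),$$
which is exactly the nesting order recorded in the definition of $[\bar w\bar x^{\mathbf{i}}\bar y^{\mathbf{j}}]$.

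Finally I would assemble the total sign. The factor $(-1)^{|\mathbf{i}|}$ arises from combining the $|\mathbf{i}|=i_1+\cdots+i_m$ individual $(-1)^{i_{k}}$'s produced in the even steps. The product $k_{1,b_1,j_1}\cdots k_{n,b_n,j_n}$ collects the signs contributed by the $n$ odd steps, each of which is the sign obtained when moving the running intermediate $\bar w''$ past $\bar y_k$. The bookkeeping subtlety — and therefore the main thing to be careful about — is establishing that the parity of the running $\bar w''$ just before the $\bar y_k$-step is exactly $|\bar w|+j_1+\cdots+j_{k-1}\pmod 2$. This requires the observation that applying $(\mathrm{ad}\,\bar x_{k'})^{i_{k'}}$ for even $\bar x_{k'}$ does not change parity at all, whereas each already-applied $(\mathrm{ad}\,\bar y_{k'})^{j_{k'}}$ with $k'<k$ flips parity exactly $j_{k'}\in\{0,1\}$ times. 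Plugging this parity into the sign prescriptions for $k_{k,b_k,j_k}$ obtained in the preliminary step reproduces the formulas given in the statement, which completes the proof.
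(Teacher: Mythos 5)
Your proposal is correct and follows essentially the same route as the paper's proof: the paper likewise first pushes $\bar w$ through the even block $\bar x^{\mathbf a}$ (quoting Premet's formula for that step) and then through the odd block $\bar y^{\mathbf b}$ by induction on one factor at a time, with the $\bar u^{\mathbf c}\bar v^{\mathbf d}$ left untouched on the right, and the sign bookkeeping rests on exactly the parity observation you isolate, namely that the running intermediate element has parity $|\bar w|+j_1+\cdots+j_{k-1}$ before the $\bar y_k$-step. No substantive difference.
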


\begin{proof}The lemma can be proved by induction. Let $\bar w$ be any $\mathbb{Z}_2$-homogeneous element in $U_\chi({\ggg}_{\bbk})$ and denote its $\mathbb{Z}_2$-degree by $|\bar w|$. For each $\bar y_i\in ({\ggg}_{\bbk})_{\bar1} (1\leqslant i\leqslant n)$, let $R_{\bar y_j}^{i}$ be the $i$-th right multiplication by $\bar y_j(1\leqslant j\leqslant n)$, i.e. $R_{\bar y_j}^{i}(\bar u)=\bar u\bar y_j^{i}$ for any $\bar u\in U_\chi({\ggg}_{\bbk})$. For each $0\leqslant  s'\leqslant  n-1$, by the definition one can conclude that
\[
\begin{array}{ll}
&R_{\bar y_{s'+1}}((\text{ad}\bar y_{s'})^{k_{s'}}(\text{ad}\bar y_{s'-1})^{k_{s'-1}}\cdots(\text{ad}\bar y_{1})^{k_1}(\bar w))\\
=&(-1)^{|\bar w|+1+k_1+\cdots+k_{s'}}\bar y_{s'+1}^0(\text{ad}\bar y_{s'+1})^1(\text{ad}\bar y_{s'})^{k_{s'}}(\text{ad}\bar y_{s'-1})^{k_{s'-1}}\cdots(\text{ad}\bar y_{1})^{k_1}(\bar w)\\
+&(-1)^{|\bar w|+k_1+\cdots+k_{s'}}\bar y_{s'+1}^1(\text{ad}\bar y_{s'+1})^0(\text{ad}\bar y_{s'})^{k_{s'}}(\text{ad}\bar y_{s'-1})^{k_{s'-1}}\cdots(\text{ad}\bar y_{1})^{k_1}(\bar w).
\end{array}
\] For any monomial $\bar x^{\mathbf{a}}\bar y^\mathbf{b}\bar u^\mathbf{c}\bar v^\mathbf{d}$ in the basis of $U_\chi({\ggg}_{\bbk})$, note that all the indices of the odd elements of ${\ggg}_{\bbk}$ (i.e. the indices of $\bar y_i$'s and $\bar v_i$'s) are in the set $\{0,1\}$ by the PBW theorem. Let $j_1,\cdots,j_n\in\{0, 1\}$, and define$$k_{j_i,0,0}:=1,~k_{j_i,0,1}:=0, k_{j_i,1,0}:=(-1)^{|w|+k_1+\cdots+k_{j_i-1}},~k_{j_i,1,1}:=(-1)^{|w|+1+ k_1+\cdots+k_{j_i-1}}$$ for $1\leqslant i\leqslant n$ (where $j_0$ is interpreted as $0$). One can check that
\begin{equation}\label{wdot}
\begin{split}
\bar w\cdot \bar y_1^{j_1}\cdots \bar y_n^{j_n}=&R_{\bar y_n}^{j_n}R_{\bar y_{n-1}}^{j_{n-1}}\cdots R_{\bar y_1}^{j_1}(\bar w)\\
=&R_{\bar y_n}^{j_n}R_{\bar y_{n-1}}^{j_{n-1}}\cdots R_{\bar y_2}^{j_2}(\sum\limits_{i_1=0}^{j_1}k_{1,j_1,i_1}\bar y_1^{j_1-i_1}(\text{ad}\bar y_1)^{i_1}(\bar w))=\cdots\cdots\\
=&\sum\limits_{i_1=0}^{j_1}\sum\limits_{i_2=0}^{j_2}\cdots\sum\limits_{i_n=0}^{j_n}k_{1,j_1,i_1}k_{2,j_2,i_2}\cdots k_{n,j_n,i_n}\bar y_1^{j_1-i_1}\bar y_2^{j_2-i_2}\cdots\bar  y_n^{j_n-i_n}\\
&(\text{ad}\bar y_n)^{i_n}\cdot(\text{ad}\bar y_{n-1})^{i_{n-1}}\cdots(\text{ad}\bar y_{1})^{i_{1}}(\bar w)
\end{split}
\end{equation}
by induction.

Since all the $\bar x_i$'s for $1\leqslant i \leqslant m$ are even elements in ${\ggg}_{\bbk}$, one can obtain that
\begin{equation}\label{wdot-1}
\bar w\cdot \bar x_1^{a_1}\cdots \bar x_m^{a_m}=\sum_{\mathbf{i}\in\Lambda_{m}}(-1)^{|\mathbf{i}|}\left(\begin{array}{@{\hspace{0pt}}c@{\hspace{0pt}}} \mathbf{a}\\ \mathbf{i}\end{array}\right)\bar x^{\mathbf{a}-\mathbf{i}}\cdot(\text{ad}\bar x_m)^{i_m}\cdots (\text{ad}\bar x_1)^{i_1}(\bar w)
\end{equation}
by \cite[\S3.1(2)]{P2}, where $\mathbf{a}\choose\mathbf{i}$$=\prod\limits_{l'=1}^m$$a_{l'}\choose i_{l'}$.

Write $[\bar w\bar x^{\mathbf{i}}]=(-1)^{|\mathbf{i}|}(\text{ad}\bar x_m)^{i_m}\cdots (\text{ad}\bar x_1)^{i_1}(\bar w)$.
Since all the elements $\bar x_1,\cdots,\bar x_m$ are even, $[\bar w\bar x^{\mathbf{i}}]$ is also a $\mathbb{Z}_2$-homogeneous element with the same parity as $\bar w$. It can be inferred from \eqref{wdot} and \eqref{wdot-1} that
\begin{equation}\label{wxyuv}
\begin{split}
\bar w\cdot \bar x^{\mathbf{a}}\bar y^\mathbf{b}\bar u^\mathbf{c}\bar v^\mathbf{d}=&\sum\limits_{\mathbf{i}\in\Lambda_{m}}\left(\begin{array}{@{\hspace{0pt}}c@{\hspace{0pt}}} \mathbf{a}\\ \mathbf{i}\end{array}\right)\bar x^{\mathbf{a}-\mathbf{i}}\cdot[\bar w\bar x^{\mathbf{i}}]\cdot
\bar y^{\mathbf{b}}\cdot \bar u^\mathbf{c}\bar v^\mathbf{d}\\
=&\sum\limits_{\mathbf{i}\in\Lambda_{m}}\sum\limits_{j_1=0}^{b_1}\cdots\sum\limits_{j_n=0}^{b_n}
\left(\begin{array}{@{\hspace{0pt}}c@{\hspace{0pt}}} \mathbf{a}\\ \mathbf{i}\end{array}\right)\bar x^{\mathbf{a}-\mathbf{i}}k_{1,b_1,j_1}\cdots k_{n,b_n,j_n}\bar y_1^{b_1-j_1}\bar y_2^{b_2-j_2}\cdots\\
&\bar y_n^{b_n-j_n}(\text{ad}\bar y_n)^{j_n}(\text{ad}\bar y_{n-1})^{j_{n-1}}\cdots(\text{ad}\bar y_{1})^{j_{1}}([\bar w\bar x^{\mathbf{i}}])\cdot\bar  u^\mathbf{c}\bar v^\mathbf{d}\\
=&\sum\limits_{\mathbf{i}\in\Lambda_{m}}\sum\limits_{j_1=0}^{b_1}\cdots\sum\limits_{j_n=0}^{b_n}(-1)^{|\mathbf{i}|}\left(\begin{array}{@{\hspace{0pt}}c@{\hspace{0pt}}} \mathbf{a}\\ \mathbf{i}\end{array}\right)k_{1,b_1,j_1}\cdots k_{n,b_n,j_n}\bar x^{\mathbf{a}-\mathbf{i}}\bar y^{\mathbf{b}-\mathbf{j}}\\
&(\text{ad}\bar y_n)^{j_n}\cdots(\text{ad}\bar y_{1})^{j_{1}}(\text{ad}\bar x_m)^{i_m}\cdots(\text{ad}\bar x_1)^{i_1}(\bar w)\cdot \bar u^\mathbf{c}\bar v^\mathbf{d},
\end{split}
\end{equation}
where the coefficients $k_{1,b_1,j_1},\cdots, k_{n,b_n,j_n}\in{\bbk}$ in \eqref{wxyuv} are defined by:
$$k_{t',0,0}=1, k_{t',0,1}=0, k_{t',1,0}=(-1)^{|\bar w|+j_1+\cdots+j_{{t'}-1}}, k_{t',1,1}=(-1)^{|\bar w|+1+j_1+\cdots+j_{{t'}-1}}$$for  $1\leqslant {t'}\leqslant n$, and $j_0$ is interpreted as $0$.

Set$$[\bar w\bar x^{\mathbf{i}}\bar y^{\mathbf{j}}]=k_{1,b_1,j_1}\cdots k_{n,b_n,j_n}(-1)^{|\mathbf{i}|}(\text{ad}\bar y_n)^{j_n}\cdots(\text{ad}\bar y_1)^{j_1}(\text{ad}\bar x_m)^{i_m}\cdots(\text{ad}\bar x_1)^{i_1}(\bar w),$$
then \eqref{wxyuv} can be written as
$$\bar w\cdot \bar x^{\mathbf{a}}\bar y^\mathbf{b}\bar u^\mathbf{c}\bar v^\mathbf{d}=\sum_{\mathbf{i}\in\Lambda_{m}}\sum_{\mathbf{j}\in\Lambda_{n}'}\left(\begin{array}{@{\hspace{0pt}}c@{\hspace{0pt}}} \mathbf{a}\\ \mathbf{i}\end{array}\right)\bar x^{\mathbf{a}-\mathbf{i}}\bar y^{\mathbf{b}-\mathbf{j}}\cdot[\bar w\bar x^{\mathbf{i}}\bar y^{\mathbf{j}}]\cdot \bar u^\mathbf{c}\bar v^\mathbf{d}.$$
\end{proof}

Let $\rho_\chi$ denote the natural representation of $U_\chi({\ggg}_{\bbk})$ in $\text{End}_{\bbk}Q_\chi^\chi$. We can get the following result:

\begin{lemma}\label{commutative relations k2}
Let $(\mathbf{a},\mathbf{b},\mathbf{c},\mathbf{d}),~(\mathbf{a}',\mathbf{b}',\mathbf{c}',\mathbf{d}')\in\Lambda_m\times
\Lambda'_n\times\Lambda_s\times\Lambda'_t$ with  $|(\mathbf{a},\mathbf{b},\mathbf{c},\mathbf{d})|_e=A,~|(\mathbf{a}',\mathbf{b}',\mathbf{c}',\mathbf{d}')|_e=B$. Then
\[\begin{array}{ccl}(\rho_\chi(\bar x^{\mathbf{a}}\bar y^\mathbf{b}\bar u^\mathbf{c}\bar v^\mathbf{d}))(\bar x^{\mathbf{a}'}\bar y^{\mathbf{b}'}\bar u^{\mathbf{c}'}\bar v^{\mathbf{d}'}\otimes\bar 1_\chi)&=&
(K\bar x^{\mathbf{a}+\mathbf{a}'}\bar y^{\mathbf{b}+\mathbf{b}'}\bar u^{\mathbf{c}+\mathbf{c}'}\bar v^{\mathbf{d}+\mathbf{d}'}+\text{terms of}~e\text{-degree}\\ &&\leqslant A+B-2)\otimes\bar 1_\chi,
\end{array}\]
where the coefficient $K\in \{-1,0,1\}\subseteq \bbk$ is subject to the following items:
\begin{itemize}
\item[(1)] $K=0$ if and only if $(\mathbf{a}+\mathbf{a}',\mathbf{b}+\mathbf{b}',\mathbf{c}+\mathbf{c}',\mathbf{d}+\mathbf{d}')\notin\Lambda_m\times\Lambda'_n\times\Lambda_s\times\Lambda'_t$;
\item[(2)] If $K\neq0$, then one can further determine $K$ as follows. For the sequence $(\mathbf{b},\mathbf{d},\mathbf{b}',\mathbf{d}')$, denote it by $(z_1,\cdots,z_{2n+2t})$. Define
    $$\epsilon(z_i)=\begin{cases} i &\mbox{ if }z_i=1;\cr
     \mbox{naught} &\mbox{ if }z_i= 0.
       \end{cases}$$
       Since one can get a new sequence$$(b_{1},b'_{1},b_{2},b'_{2},\cdots,b_{n},b'_{n},d_{1},d'_{1},d_{2},d'_{2}\cdots,d_{t},d'_{t})$$from $(\mathbf{b},\mathbf{d},\mathbf{b}',\mathbf{d}')$
by transposition, let $\tau(\mathbf{b},\mathbf{d},\mathbf{b}',\mathbf{d}')$ denote the times by which we do the transpositions changing $ (\epsilon(z_1),\cdots,\epsilon(z_{2n+2t}))$ to
$$(\epsilon(b_{1}),\epsilon(b'_{1}),\epsilon(b_{2}),\epsilon(b'_{2}),\cdots,\epsilon(b_{n}),\epsilon(b'_{n}),\epsilon(d_{1}),\epsilon(d'_{1}),\epsilon(d_{2}),\epsilon(d'_{2})\cdots,\epsilon(d_{t}),\epsilon(d'_{t})),$$
  then $K=(-1)^{\tau(\mathbf{b},\mathbf{d},\mathbf{b}',\mathbf{d}')}$.
\end{itemize}
\end{lemma}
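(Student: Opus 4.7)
The plan is to compute the product $\bar{x}^{\mathbf{a}}\bar{y}^{\mathbf{b}}\bar{u}^{\mathbf{c}}\bar{v}^{\mathbf{d}}\cdot\bar{x}^{\mathbf{a}'}\bar{y}^{\mathbf{b}'}\bar{u}^{\mathbf{c}'}\bar{v}^{\mathbf{d}'}$ inside $U_\chi(\ggg_\bbk)$ modulo elements of Kazhdan $e$-degree $\leqslant A+B-2$, and then apply it to $\bar 1_\chi$. The reduction to PBW form is carried out by a finite sequence of applications of Lemma~\ref{commutative relations k1}, each of which pushes one homogeneous block past an adjacent one: the $\mathbf{i}=\mathbf{j}=\mathbf{0}$ term of that formula gives the ``clean'' supercommuted product, while every other term contains at least one iterated $\text{ad}$ bracket and therefore drops in $e$-degree by at least $2$ per level.

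Concretely, I would first take $\bar{w}=\bar{u}^{\mathbf{c}}\bar{v}^{\mathbf{d}}$ (of parity $|\mathbf{d}|$) and push it past $\bar{x}^{\mathbf{a}'}\bar{y}^{\mathbf{b}'}$; the principal term contributes the supercommutation sign $(-1)^{|\mathbf{d}||\mathbf{b}'|}$ from $\bar{v}^{\mathbf{d}}$ crossing $\bar{y}^{\mathbf{b}'}$. Next I would push $\bar{y}^{\mathbf{b}}$ past $\bar{x}^{\mathbf{a}'}$ and $\bar{v}^{\mathbf{d}}$ past $\bar{u}^{\mathbf{c}'}$ (each sign $+1$, since $\bar{x}_i,\bar{u}_i$ are even), obtaining
$$(-1)^{|\mathbf{d}||\mathbf{b}'|}\,\bar{x}^{\mathbf{a}}\bar{x}^{\mathbf{a}'}\bar{y}^{\mathbf{b}}\bar{y}^{\mathbf{b}'}\bar{u}^{\mathbf{c}}\bar{u}^{\mathbf{c}'}\bar{v}^{\mathbf{d}}\bar{v}^{\mathbf{d}'}+\text{(terms of $e$-degree $\leqslant A+B-2$)}.$$
Within each block the even generators $\bar{x}_i,\bar{u}_i$ rearrange into lexicographic order without sign (modulo lower-$e$-degree commutators), while rearranging $\bar{y}^{\mathbf{b}}\bar{y}^{\mathbf{b}'}$ and $\bar{v}^{\mathbf{d}}\bar{v}^{\mathbf{d}'}$ into the interleaved orderings $\bar{y}_1^{b_1+b'_1}\cdots\bar{y}_n^{b_n+b'_n}$ and $\bar{v}_1^{d_1+d'_1}\cdots\bar{v}_t^{d_t+d'_t}$ picks up an additional sign $(-1)^{\sum_{i<j}b'_ib_j+\sum_{i<j}d'_id_j}$. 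Combined with the initial $(-1)^{|\mathbf{d}||\mathbf{b}'|}=(-1)^{\sum_{i,k}b'_id_k}$, the total sign is exactly $(-1)^{\tau(\mathbf{b},\mathbf{d},\mathbf{b}',\mathbf{d}')}$, as one verifies by directly counting the inversions of the permutation from $(\mathbf{b},\mathbf{d},\mathbf{b}',\mathbf{d}')$ to $(b_1,b'_1,\ldots,b_n,b'_n,d_1,d'_1,\ldots,d_t,d'_t)$.

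Finally, I would verify $K=0$ by showing that when some coordinate sum exceeds the PBW range, the putative leading monomial cannot appear. If $a_i+a'_i\geqslant p$ or $c_i+c'_i\geqslant p$, then in $U_\chi(\ggg_\bbk)$ one has $\bar{x}_i^p=\bar{x}_i^{[p]}+\chi(\bar{x}_i)^p=\bar{x}_i^{[p]}$ and likewise $\bar{u}_i^p=\bar{u}_i^{[p]}$, since $e\in\ggg_\bbk(2)$ pairs non-trivially under $(\cdot,\cdot)$ only with $\ggg_\bbk(-2)$, which forces $\chi(\bar{x}_i)=\chi(\bar{u}_i)=0$; each such $p$-th power lowers the $e$-degree by $2p-2$. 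If $b_i+b'_i=2$ or $d_i+d'_i=2$, one instead uses $\bar{y}_i^2=\tfrac12[\bar{y}_i,\bar{y}_i]\in\ggg_\bbk(2k'_i)_{\bar 0}$ and $\bar{v}_i^2=\tfrac12[\bar{v}_i,\bar{v}_i]\in\ggg_\bbk(-2)_{\bar 0}$, each strictly lower in $e$-degree. In every such case the coefficient $K$ of the supposed leading monomial is zero. The principal obstacle is the sign bookkeeping: one must verify that the cumulative signs from all intermediate applications of Lemma~\ref{commutative relations k1} together with the intra-block rearrangements collapse precisely to the combinatorial $(-1)^\tau$ stated in the lemma.
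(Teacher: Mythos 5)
Your proposal is correct and follows essentially the same route as the paper's proof: both reduce the product to PBW form by repeated use of Lemma \ref{commutative relations k1}, observe that every term carrying an iterated $\mathrm{ad}$-bracket drops in $e$-degree by at least $2$, and identify the sign of the surviving leading monomial with the parity of the inversion count $\tau(\mathbf{b},\mathbf{d},\mathbf{b}',\mathbf{d}')$ (your exponent $|\mathbf{d}||\mathbf{b}'|+\sum_{i<j}b'_ib_j+\sum_{i<j}d'_id_j$ is exactly that inversion count). The only notable difference is that you justify $K=0$ explicitly via $\bar x_i^{\,p}=\bar x_i^{[p]}$ with $\chi(\bar x_i)=\chi(\bar u_i)=0$ and via $\bar y_i^{\,2}=\tfrac12[\bar y_i,\bar y_i]$, $\bar v_i^{\,2}=\tfrac12[\bar v_i,\bar v_i]$, where the paper simply interprets the out-of-range monomials as zero; this is a welcome, but not essentially different, elaboration.
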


\begin{proof}
(1) Firstly suppose that $(\mathbf{a},\mathbf{b},\mathbf{c})=\mathbf{0}$ and $|\mathbf{d}|=1$, so that $A=1$. Then $\bar v^\mathbf{d}=\bar v_S$ for some $1\leqslant  S\leqslant  t$. Applying Lemma~\ref{commutative relations k1} one obtains
\begin{equation}\label{vs}
\begin{split}
(\rho_\chi(\bar v_S))(\bar x^{\mathbf{a}'}\bar y^{\mathbf{b}'}\bar u^{\mathbf{c}'}\bar v^{\mathbf{d}'}\otimes\bar 1_\chi)
=&((-1)^{|\mathbf{b}'|}\bar x^{\mathbf{a}'}\bar y^{\mathbf{b}'}
\cdot\rho_\chi(\bar v_S)\bar u^{\mathbf{c}'}\bar v^{\mathbf{d}'}+\sum\limits_{(\mathbf{i},\mathbf{j})\neq\mathbf{0}}\alpha_{\mathbf{i}\mathbf{j}}
\bar x^{\mathbf{a}'-\mathbf{i}}\\
&\cdot \bar y^{\mathbf{b}'-\mathbf{j}}\cdot\rho_\chi([\bar v_S\bar x^{\mathbf{i}}\bar y^{\mathbf{j}}])\cdot \bar u^{\mathbf{c}'}\bar v^{\mathbf{d}'})\otimes\bar 1_\chi
\end{split}
\end{equation}
for some $\alpha_{\mathbf{i}\mathbf{j}}\in{\bbk}$. Since $\rho_\chi({\mmm}_{\bbk})$ stabilises the line ${\bbk}\bar 1_\chi$, the first summand on the right equals $(-1)^{|\mathbf{b}'|+\sum\limits_{l=1}^{S-1}d'_l}\bar x^{\mathbf{a}'}\bar y^{\mathbf{b}'}\bar u^{\mathbf{c}'}\bar v^{\mathbf{d}'+\mathbf{e}_S}\otimes\bar 1_\chi$ (where $\sum\limits_{l=1}^{S-1}d'_l$ is interpreted as $0$ for the case $S=1$) modulo terms of lower $e$-degree in \eqref{vs} (if $d'_S+1=2$, then $\bar v^{\mathbf{d}'+\mathbf{e}_S}$ is interpreted as $0$). For the second summand on the right of \eqref{vs}, we have:

(i) suppose $(\mathbf{i},\mathbf{j})\neq\mathbf{0}$ is such that $\text{wt}([\bar v_S\bar x^{\mathbf{i}}\bar y^{\mathbf{j}}])\leqslant -1$. Then $\rho_\chi([\bar v_S\bar x^{\mathbf{i}}\bar y^{\mathbf{j}}])\bar u^{\mathbf{c}'}\bar v^{\mathbf{d}'}\otimes\bar 1_\chi$ is a linear combination of $\bar u^\mathbf{f}\bar v^\mathbf{g}$ with $|\mathbf{f}|+|\mathbf{g}|\leqslant |\mathbf{c}'|+|\mathbf{d}'|+1$ as $\rho_\chi(\mathfrak{m_{\bbk}})$ stabilises the line ${\bbk}\bar1_\chi$. As a consequence, $\bar x^{\mathbf{a}'-\mathbf{i}}\bar y^{\mathbf{b}'-\mathbf{j}}\cdot\rho_\chi([\bar v_S\bar x^{\mathbf{i}}y^{\mathbf{j}}])\bar u^{\mathbf{c}'}\bar v^{\mathbf{d}'}$ is a linear combination of $
\bar x^{\mathbf{a}'-\mathbf{i}}\bar y^{\mathbf{b}'-\mathbf{j}}\bar u^\mathbf{f}\bar v^\mathbf{g}$. Since $(\mathbf{i},\mathbf{j})\neq\mathbf{0}$, and the weights $k_{s'}$ and $k'_{t'}$ of elements $\bar x_{s'}$ and $\bar y_{t'}$ for each $1\leqslant s'\leqslant m$ and $1\leqslant t'\leqslant n$ are all non-negative integers, then
\[\begin{array}{ccl}
\text{deg}_e(\bar x^{\mathbf{a}'-\mathbf{i}}\bar y^{\mathbf{b}'-\mathbf{j}}\bar u^\mathbf{f}\bar v^\mathbf{g})&=&\sum\limits_{s'=1}^m(a'_{s'}-i_{s'})(k_{s'}+2)+\sum\limits_{t'=1}^n(b'_{t'}-j_{t'})(k'_{t'}+2)
+|\mathbf{f}|+|\mathbf{g}|\\
&\leqslant& \sum\limits_{s'=1}^ma'_{s'}(k_{s'}+2)+\sum\limits_{t'=1}^nb'_{t'}(k'_{t'}+2)+(|\mathbf{f}|+|\mathbf{g}|-2|\mathbf{i}|-2|\mathbf{j}|)
\\&\leqslant& \sum\limits_{s'=1}^ma'_{s'}(k_{s'}+2)+\sum\limits_{t'=1}^nb'_{t'}(k'_{t'}+2)+(|\mathbf{c}'|+|\mathbf{d}'|+1-2|\mathbf{i}|\\
&&-2|\mathbf{j}|)
\\&\leqslant &A+B-2.
\end{array}\]

(ii) Suppose $(\mathbf{i},\mathbf{j})\neq\mathbf{0}$ is such that $\text{wt}([\bar v_S\bar x^{\mathbf{i}}y^{\mathbf{j}}])\geqslant 0$. Since ${\ggg}_{\bbk}=\bigoplus\limits_{i\in\mathbb{Z}}{\ggg}_{\bbk}(i)$ is the Dynkin grading of ${\ggg}_{\bbk}$, the image of $\mathfrak{p}_{\bbk}$ is still in $\mathfrak{p}_{\bbk}$ under the action of $\text{ad}h$. This implies that $\bar x^{\mathbf{a}'-\mathbf{i}}\bar y^{\mathbf{b}'-\mathbf{j}}\cdot[\bar v_S\bar x^{\mathbf{i}}\bar y^{\mathbf{j}}]$ is a linear combination of $\bar x^\mathbf{f}\bar y^\mathbf{g}$ with $\text{wt}(\bar x^\mathbf{f}\bar y^\mathbf{g})=\text{wt}(\bar x^{\mathbf{a}'}\bar y^{\mathbf{b}'})-1$ and $|\mathbf{f}|+|\mathbf{g}|\leqslant
|\mathbf{a}'|+|\mathbf{b}'|-|\mathbf{i}|-|\mathbf{j}|+1$. Therefore, $\bar x^{\mathbf{a}'-\mathbf{i}}\bar y^{\mathbf{b}'-\mathbf{j}}\cdot\rho_\chi([\bar v_S\bar x^{\mathbf{i}}\bar y^{\mathbf{j}}])\bar u^{\mathbf{c}'}\bar v^{\mathbf{d}'}$ is a linear combination of $\bar x^{\mathbf{f}}\bar y^{\mathbf{g}}\bar u^{\mathbf{c}'}\bar v^{\mathbf{d}'}$ with
\[\begin{array}{ccl}
\text{deg}_e(\bar x^{\mathbf{f}}\bar y^{\mathbf{g}}\bar u^{\mathbf{c}'}\bar v^{\mathbf{d}'})&=&\text{wt}(\bar x^{\mathbf{f}}\bar y^{\mathbf{g}}\bar u^{\mathbf{c}'}\bar v^{\mathbf{d}'})+
2\text{deg}(\bar x^{\mathbf{f}}\bar y^{\mathbf{g}}\bar u^{\mathbf{c}'}\bar v^{\mathbf{d}'})\\
&=&\text{wt}(\bar x^{\mathbf{f}}\bar y^{\mathbf{g}})-(|\mathbf{c}'|+|\mathbf{d}'|)
+2(|\mathbf{f}|+|\mathbf{g}|+|\mathbf{c}'|+|\mathbf{d}'|)\\
&=&\text{wt}(\bar x^{\mathbf{a}'}\bar y^{\mathbf{b}'})-1+2(|\mathbf{f}|+|\mathbf{g}|)+(|\mathbf{c}'|+|\mathbf{d}'|)\\
&\leqslant& \text{wt}(\bar x^{\mathbf{a}'}\bar y^{\mathbf{b}'})-1+2(|\mathbf{a}'|+|\mathbf{b}'|-|\mathbf{i}|-|\mathbf{j}|+1)+(|\mathbf{c}'|+|\mathbf{d}'|)\\
&=&\text{wt}(\bar x^{\mathbf{a}'}\bar y^{\mathbf{b}'}\bar u^{\mathbf{c}'}\bar v^{\mathbf{d}'})+2\text{deg}(\bar x^{\mathbf{a}'}\bar y^{\mathbf{b}'}\bar u^{\mathbf{c}'}\bar v^{\mathbf{d}'})-2(|\mathbf{i}|+|\mathbf{j}|)+1\\
&\leqslant &A+B-2.
\end{array}\]

By (i) and (ii) we have
\[\begin{array}{ccl}
(\rho_\chi(\bar v_S))(\bar x^{\mathbf{a}'}\bar y^{\mathbf{b}'}\bar u^{\mathbf{c}'}\bar v^{\mathbf{d}'}\otimes\bar 1_\chi)&=&((-1)^{|\mathbf{b}'|+\sum\limits_{l=1}^{S-1}d'_l}\bar x^{\mathbf{a}'}\bar y^{\mathbf{b}'}\bar u^{\mathbf{c}'}\bar v^{\mathbf{d}'+\mathbf{e}_S}+\text{terms of}~e\text{-degree}\\
&&\leqslant A+B-2)\otimes\bar 1_\chi.
\end{array}\]

(2) Induction on $|\mathbf{d}|=|(\mathbf{0},\mathbf{0},\mathbf{0},\mathbf{d})|_e=A$ now shows that
$$(\rho_\chi(\bar v^\mathbf{d}))(\bar x^{\mathbf{a}'}\bar y^{\mathbf{b}'}\bar u^{\mathbf{c}'}\bar v^{\mathbf{d}'}\otimes\bar 1_\chi)=(K^{''}\bar x^{\mathbf{a}'}\bar y^{\mathbf{b}'}\bar u^{\mathbf{c}'}\bar v^{\mathbf{d}+\mathbf{d}'}+\text{terms of}~e\text{-degree}\leqslant A+B-2)\otimes\bar 1_\chi,$$
where the coefficient $K^{''}\in{\bbk}$ is a power of $-1$. If $\mathbf{d}+\mathbf{d}'\notin\Lambda'_t$, then set $K^{''}=0$.

(3) Notice that $\bar u^\mathbf{c}$ is a product of even elements in ${\ggg}_{\bbk}$. Combining the formula displayed in step (2) and discussing in the same way as (1) and (2), it is now easy to derive that
\[\begin{array}{ccl}(\rho_\chi(\bar u^\mathbf{c}\bar v^\mathbf{d}))(\bar x^{\mathbf{a}'}\bar y^{\mathbf{b}'}\bar u^{\mathbf{c}'}\bar v^{\mathbf{d}'}\otimes\bar 1_\chi)&=&(K^{''}\bar x^{\mathbf{a}'}\bar y^{\mathbf{b}'}\bar u^{\mathbf{c}+\mathbf{c}'}\bar v^{\mathbf{d}+\mathbf{d}'}+\text{terms
of}~e\text{-degree}\\
&&\leqslant A+B-2)\otimes\bar 1_\chi.
\end{array}\]
If $(\mathbf{c}+\mathbf{c}',\mathbf{d}+\mathbf{d}')\notin \Lambda_s\times\Lambda'_t$, then the first summand on the right hand is interpreted as $0$.

(4) Since the image of $\mathfrak{p}_{\bbk}$ is still in $\mathfrak{p}_{\bbk}$ under the action of $\text{ad}\,h$, the PBW theorem for $U_\chi(\mathfrak{p}_{\bbk})$ implies that
$$\bar x^{\mathbf{a}}\bar y^\mathbf{b}\cdot \bar x^{\mathbf{a}'}\bar y^{\mathbf{b}'}=K^{'''}\bar x^{\mathbf{a}+\mathbf{a}'}\bar y^{\mathbf{b}+\mathbf{b}'}+\sum\limits_{|\mathbf{i}|+|\mathbf{j}|<
|\mathbf{a}|+|\mathbf{a}'|+|\mathbf{b}|+|\mathbf{b}'|}\beta_{\mathbf{i},\mathbf{j}}\bar x^{\mathbf{i}}\bar y^{\mathbf{j}},$$
where $K^{'''}\in{\bbk}$ is a power of $-1$. If $(\mathbf{a}+\mathbf{a}',\mathbf{b}+\mathbf{b}')\notin \Lambda_m\times\Lambda'_n$, then set $K^{'''}=0$, and $\beta_{\mathbf{i},\mathbf{j}}=0$ unless $\text{wt}(\bar x^{\mathbf{i}}\bar y^{\mathbf{j}})=\text{wt}(\bar x^{\mathbf{a}}\bar y^\mathbf{b})+\text{wt}(\bar x^{\mathbf{a}'}\bar y^{\mathbf{b}'})$.

(5) It can be inferred from (3) and (4) that
\[\begin{array}{ccl}(\rho_\chi(\bar x^{\mathbf{a}}\bar y^\mathbf{b}\bar u^\mathbf{c}\bar v^\mathbf{d}))(\bar x^{\mathbf{a}'}\bar y^{\mathbf{b}'}\bar u^{\mathbf{c}'}\bar v^{\mathbf{d}'}\otimes\bar 1_\chi)&=&
(K'\bar x^{\mathbf{a}+\mathbf{a}'}\bar y^{\mathbf{b}+\mathbf{b}'}\bar u^{\mathbf{c}+\mathbf{c}'}\bar v^{\mathbf{d}+\mathbf{d}'}+\text{terms of}~e\text{-degree}\\&&\leqslant A+B-2)\otimes\bar1_\chi,
\end{array}\]
where $K'\in{\bbk}$ is a power of $-1$. If $(\mathbf{a}+\mathbf{a}',\mathbf{b}+\mathbf{b}',\mathbf{c}+\mathbf{c}',\mathbf{d}+\mathbf{d}')\notin\Lambda_m\times\Lambda'_n\times\Lambda_s\times\Lambda'_t$, set $K'=0$.

(6) Finally we will discuss the value of $K'$ in (5). Given any homogeneous elements $\bar u,\bar v\in{\ggg}_{\bbk}$, it can be deduced from the definition of $e$-degree that
\[\bar u\bar v\equiv\left\{\begin{array}{ll}\bar v\bar u&\text{if at least one of}~\bar u, \bar v~\text{is even;}\\-\bar v\bar u&\text{if}~\bar u~\text{and}~\bar v~\text{are both odd}\end{array}\right.\]
modolo terms of lower $e$-degree in $U_\chi({\ggg}_{\bbk})$. Therefore, in order to determine the value of $K'$, one just needs to deal with the odd elements. For each case we will consider separately:

(i) by (1)-(5), we know $K'=0$ if and only if $(\mathbf{a}+\mathbf{a}',\mathbf{b}+\mathbf{b}',\mathbf{c}+\mathbf{c}',\mathbf{d}+\mathbf{d}')\notin\Lambda_m\times\Lambda'_n\times\Lambda_s\times\Lambda'_t$.

(ii) If $(\mathbf{a}+\mathbf{a}',\mathbf{b}+\mathbf{b}',\mathbf{c}+\mathbf{c}',\mathbf{d}+\mathbf{d}')\in\Lambda_m\times\Lambda'_n\times\Lambda_s\times\Lambda'_t$, it follows from the definition of $\Lambda'_n$ and $\Lambda'_t$ that each entry in the sequence $(\mathbf{b},\mathbf{d},\mathbf{b}',\mathbf{d}')$ is in the set $\{0,1\}$. By earlier discussion one knows that if two odd elements exchange their positions in the product of $U_\chi({\ggg}_{\bbk})$, there is a sign change modulo terms of lower $e$-degree. After one deleting all the zero entries in $(\mathbf{b},\mathbf{d},\mathbf{b}',\mathbf{d}')$, since the position exchange for the odd elements of ${\ggg}_{\bbk}$ in $U_\chi({\ggg}_{\bbk})$ of (5) corresponds to the transpositions from the sequence $(b_{1},b_{2},\cdots,b_{n},d_{1},d_{2},\cdots,d_{t},b'_{1},b'_{2},\cdots,b'_{n},d'_{1},d'_{2},\cdots,d'_{t})$ to
$$(b_{1},b'_{1},b_{2},b'_{2},\cdots,b_{n},b'_{n},d_{1},d'_{1},d_{2},d'_{2}\cdots,d_{t},d'_{t}),$$
 it follows that the constant $K'$ in step (5) coincides with the constant $K$ defined in the lemma.
\end{proof}

\subsection{The leading-terms lemma}

\begin{lemma} \label{hw}
Let $\bar h\in U_\chi({\ggg}_{\bbk},e)\backslash\{0\}$ and $(\mathbf{a},\mathbf{b},\mathbf{c},\mathbf{d})\in \Lambda^{\text{max}}_{\bar h}$. Then $\mathbf{c}=\mathbf{0}$ and $\mathbf{a}\in\Lambda_{l}\times\{\mathbf{0}\},~\mathbf{b}\in\Lambda'_{q}\times\{\mathbf{0}\}$. Moreover, the sequence $\mathbf{d}$ satisfies

(1) $\mathbf{d}=\mathbf{0}$ when $\sfr=\dim\ggg_{\bbk}(-1)_{\bar{1}}$ is even;

(2) $\mathbf{d}\in\{\mathbf{0}\}_{\frac{\sfr-1}{2}}\times\Lambda'_{1}$ when $\sfr=\dim\ggg_{\bbk}(-1)_{\bar{1}}$  is odd.
\end{lemma}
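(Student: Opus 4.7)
By Proposition~\ref{invariant}, an element $\bar h\in U_\chi(\ggg_\bbk,\bar e)$ is determined by its image $\bar h(\bar 1_\chi)\in(Q_\chi^\chi)^{\mathrm{ad}\,\mmm_\bbk}$, i.e.\
\[
(\rho_\chi(\bar z)-\chi(\bar z))\bar h(\bar 1_\chi)=0 \quad\text{in }Q_\chi^\chi \quad\text{for every }\bar z\in\mmm_\bbk.
\]
The strategy is to test this identity against four families of $\bar z\in\mmm_\bbk$, one for each of the claimed constraints, and in each case to isolate the unique top-bi-degree contribution by combining Lemma~\ref{commutative relations k1} with the $\chi$-scalarization analysis that underlies Lemma~\ref{commutative relations k2}.

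\emph{Constraints on $\mathbf c$.} Fix $k'\in\{1,\ldots,s\}$ and take $\bar z=\bar u_{s+k'}\in\ggg_\bbk(-1)'_{\bar 0}\subseteq\mmm_\bbk$, so that $\chi(\bar z)=0$. Expanding $\bar z\cdot\bar x^{\mathbf a}\bar y^{\mathbf b}\bar u^{\mathbf c}\bar v^{\mathbf d}$ via Lemma~\ref{commutative relations k1} and using the symplectic pairing $\langle\bar u_{s+k'},\bar u_j\rangle=\delta_{j,s+1-k'}$, the only commutator branch that preserves both $e$-degree $n(\bar h)$ and weight $N$ through the eventual $\chi$-reduction on $\bar 1_\chi$ is the single transposition $\bar u_{s+k'}\bar u_{s+1-k'}\leadsto 1$; every other branch strictly decreases the $e$-degree by at least two or the weight by at least two. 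Collecting such terms and invoking PBW linear independence forces $c_{s+1-k'}\lambda_{\mathbf a,\mathbf b,\mathbf c,\mathbf d}=0$ on every tuple in $\Lambda^{\max}_{\bar h}$; letting $k'$ vary yields $\mathbf c=\mathbf 0$.

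\emph{Constraints on $\mathbf d$.} The parallel argument with $\bar z=\bar v_{\sfr+1-j}\in\mmm_\bbk$ --- legitimate exactly when $\sfr+1-j>t$ --- together with $\langle\bar v_{\sfr+1-j},\bar v_j\rangle=1$ yields $d_j=0$ for every such $j$. When $\sfr$ is even, $t=\sfr/2$ and every $j\in\{1,\ldots,t\}$ admits a dual in $\mmm_\bbk$, so $\mathbf d=\mathbf 0$. When $\sfr$ is odd, the middle vector $\bar v_{(\sfr+1)/2}$ is self-paired under $\langle\cdot,\cdot\rangle$ and lies outside $\mmm_\bbk$, so duals are available only for $j\in\{1,\ldots,(\sfr-1)/2\}$; this leaves the entry $d_{(\sfr+1)/2}$ unconstrained and produces $\mathbf d\in\{\mathbf 0\}_{(\sfr-1)/2}\times\Lambda'_1$. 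This is precisely the parity dichotomy recorded in the lemma and is the structural source of the detecting-parity phenomenon emphasized in the introduction.

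\emph{Constraints on $\mathbf a$ and $\mathbf b$.} For $i\in\{l+1,\ldots,m\}$ the basis choice gives $\bar x_i\in[\bar f,\ggg_\bbk(k_i+2)_{\bar 0}]$. The $\mathfrak{sl}_2$-surjectivity $\mathrm{ad}\,\bar e\colon\ggg_\bbk(-k_i-2)_{\bar 0}\twoheadrightarrow\ggg_\bbk(-k_i)_{\bar 0}$ (valid because $-k_i-2\le -1$) lets us lift the $(\cdot,\cdot)$-dual of $\bar x_i$ in $\ggg_\bbk(-k_i)_{\bar 0}$ to an element $\bar x_i^{\#}\in\ggg_\bbk(-k_i-2)_{\bar 0}\subseteq\mmm_\bbk$ with
\[
\chi([\bar x_i^{\#},\bar x_j])=([\bar e,\bar x_i^{\#}],\bar x_j)=\delta_{ij}\quad\text{for all }j\in\{l+1,\ldots,m\}\text{ with }k_j=k_i.
\]
Testing the invariance against $\bar x_i^{\#}$ and extracting the top-bi-degree slot at $(n(\bar h)-k_i-2,\,N-k_i)$ of $(\rho_\chi(\bar x_i^{\#})-\chi(\bar x_i^{\#}))\bar h(\bar 1_\chi)$ picks out the single-commutator ``$a_i$-derivative'' $a_i\,\bar x^{\mathbf a-\mathbf e_i}\bar y^{\mathbf b}\bar u^{\mathbf c}\bar v^{\mathbf d}\otimes\bar 1_\chi$ as the unique surviving leading term, hence $a_i=0$ on every leading tuple. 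Running $i$ through $\{l+1,\ldots,m\}$ gives $\mathbf a\in\Lambda_l\times\{\mathbf 0\}$. The odd analog, replacing $\bar x_i^{\#}$ by $\bar y_j^{\#}\in\ggg_\bbk(-k'_j-2)_{\bar 1}\subseteq\mmm_\bbk$ for $j\in\{q+1,\ldots,n\}$, produces $\mathbf b\in\Lambda'_q\times\{\mathbf 0\}$.

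\emph{Main obstacle.} The hardest step is the last one: one must show that at the bi-degree $(n(\bar h)-k_i-2,\,N-k_i)$ of $\bar x_i^{\#}\bar h(\bar 1_\chi)$, no multi-commutator hop of $\bar x_i^{\#}$ through several factors $\bar x_{j'},\bar y_{j''},\bar u_{k''},\bar v_{\ell''}$ before a terminal $\chi$-reduction contaminates the clean ``$a_i$-derivative'' isolated above. The super-specific sign $(-1)^{\tau(\mathbf b,\mathbf d,\mathbf b',\mathbf d')}$ of Lemma~\ref{commutative relations k2} enters but is always nonzero, so it does not obstruct the cancellation; every stray branch, however, must be shown to strictly decrease either the $e$-degree or the weight by at least two, via the quantitative bookkeeping built into the proof of Lemma~\ref{commutative relations k2}. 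This is the super counterpart of the key computation in Premet's \cite[Lemma 4.5]{P2}, with the odd/even bookkeeping providing the only genuinely new technical twist.
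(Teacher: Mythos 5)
Your proposal follows essentially the same route as the paper's proof: you test the $\operatorname{ad}\mmm_\bbk$-invariance of $\bar h(\bar 1_\chi)$ against the same four families of dual elements of $\mmm_\bbk$ (the paper's (S1)--(S4) in Step~1), and you isolate the same extremal bi-degree slots via the projections $\pi_{ij}$ --- the slot $(n(\bar h)+\nu,\,N(\bar h)+\nu+2)$ with $\nu=\text{wt}(\bar w)\leqslant -2$ for the $\mathbf a,\mathbf b$ constraints, and $(n(\bar h)-1,\,N(\bar h)+1)$ for the $\mathbf c,\mathbf d$ constraints. The ``main obstacle'' you defer at the end is precisely what the paper's Step~2 executes (the case analysis according to $\text{wt}([\bar w\bar x^{\mathbf i}\bar y^{\mathbf j}])\geqslant 0$, $=-1$, $=-2$, with the $e$-degree and weight estimates), so structurally nothing is missing. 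One concrete error to fix: your construction of $\bar x_i^{\#}$ rests on the claim that $\operatorname{ad}\bar e\colon\ggg_\bbk(-k_i-2)_{\bz}\rightarrow\ggg_\bbk(-k_i)_{\bz}$ is surjective ``because $-k_i-2\leqslant -1$''; the $\mathfrak{sl}_2$-theory gives exactly the opposite in negative degrees, namely \emph{injectivity} (for instance $\operatorname{ad}\bar e\colon\ggg_\bbk(-2)_{\bz}\rightarrow\ggg_\bbk(0)_{\bz}$ misses $(\ggg^e_\bbk)_{\bz}(0)$ whenever $e$ is not regular). The construction survives because the $(\cdot,\cdot)$-dual of $\bar x_i$ with $i>l$ vanishes on $\ggg^e_\bbk(k_i)$ and hence lies in $[\bar e,\ggg_\bbk(-k_i-2)]=(\ggg^e_\bbk(k_i))^{\perp}\cap\ggg_\bbk(-k_i)$; the paper avoids the issue entirely by producing $\bar w_k$ from the non-degeneracy of the pairing together with $\chi([\bar w,\bar x_j])=-(\bar w,[\bar e,\bar x_j])$, which also yields for free that $\chi([\bar w_k,\bar x_j])=0$ for $j\leqslant l$. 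A smaller slip: in your $\mathbf c$-paragraph the surviving branch is detected at bi-degree $(n(\bar h)-1,\,N+1)$, not at $(n(\bar h),\,N)$; this is only bookkeeping and does not affect the conclusion.
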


\begin{proof} We prove the lemma by reductio ad absurdum. This is to say, under the assumption that  one of the following situations happens:

(I)  if $\sfr$ is even, then $$(a_{l+1},\cdots,a_m,b_{q+1},\cdots,b_n,c_1,\cdots,c_s,d_1\cdots,d_{\frac{\sfr}{2}})\neq\{\mathbf{0}\};$$

(II) if $\sfr$ is odd, then $$(a_{l+1},\cdots,a_m,b_{q+1},\cdots,b_n,c_1,\cdots,c_s,d_1\cdots,d_{\frac{\sfr-1}{2}})\neq\{\mathbf{0}\},$$
some contradiction comes out. Our arguments proceed by steps.
 \vskip0.1cm
 Step 1: We begin  with interpreting the above assumption into the following   precise circumstance.

(S1) if $a_k\neq0$ for some $k>l$ set $\bar x_k\in{\ggg}_{\bbk}(n_k)_{\bar0}$. Since $\bar x_k\notin({\ggg}_{\bbk}^e)_{\bar{0}}$ and the bilinear form $(\cdot,\cdot)$ is non-degenerate, there is $\bar w=\bar w_k\in{\ggg}_{\bbk}(-n_k-2)_{\bar{0}}$ such that $\chi([\bar w_k,\bar x_i])=\delta_{ki}$ for all $i>l$.

(S2) if all $a_i$'s are zero for $i>l$, and there is $b_k\neq0$ for some $k>q$, then set $\bar y_k\in{\ggg}_{\bbk}(n'_k)_{\bar1}$. Since $\bar y_k\notin({\ggg}_{\bbk}^e)_{\bar{1}}$ and the bilinear form $(\cdot,\cdot)$ is non-degenerate, there is $\bar w=\bar w'_k\in{\ggg}_{\bbk}(-n'_k-2)_{\bar{1}}$ such that $\chi([\bar w'_k,\bar y_i])=\delta_{ki}$ for all $i>q$.

(S3) if all $a_i$'s and $b_j$'s are zero for $i>l$ and $j>q$, and there is $\bar u_k\neq0$ for some $1\leqslant  k\leqslant  s$, choose $\bar w=\bar z_k\in{\ggg}_{\bbk}(-1)'_{\bar{0}}$ such that $\langle \bar z_k,\bar u_i\rangle=\delta_{ki}$ for all $1\leqslant  k\leqslant  s$.

(S4) if all $a_i$'s, $b_j$'s and $c_k$'s are zero for $i>l$, $j>q$ and $1\leqslant k\leqslant s$, respectively, then

(S4-a) when dim\,${\ggg}_{\bbk}(-1)_{\bar{1}}$ is even, there exists $\bar w=\bar z'_k\in{\ggg}_{\bbk}(-1)'_{\bar{1}}$ ($1\leqslant k\leqslant \frac{\sfr}{2}$) such that $\langle \bar z'_k,\bar v_i\rangle=\delta_{ki}$ for all $1\leqslant i\leqslant \frac{\sfr}{2}$;

(S4-b) when dim\,${\ggg}_{\bbk}(-1)_{\bar{1}}$ is odd, there exists $\bar w=\bar z^{''}_k\in{\ggg}_{\bbk}(-1)'_{\bar{1}}$ ($1\leqslant  k\leqslant \frac{\sfr-1}{2}$) such that $\langle \bar z^{''}_k,\bar v_i\rangle=\delta_{ki}$ for all $1\leqslant  i\leqslant \frac{\sfr+1}{2}$.

Take $\bar w\in\mmm_\bbk$ as in the above items, and write $\nu:=\text{wt}(\bar w)$.
\vskip0.1cm
   Step 2: Next let $(\mathbf{a},\mathbf{b},\mathbf{c},\mathbf{d})\in\Lambda^{d'}_{\bar h}$ where $d'\in\mathbb{Z}_+$. Under the assumption,  $\bar w$ is $\mathbb{Z}_2$-homogeneous. It is immediate from Lemma~\ref{commutative relations k1} and the definition of $Q_\chi^\chi$ that
\begin{equation}\label{rhow}
(\rho_\chi(\bar w)) (\bar x^{\mathbf{a}}\bar y^\mathbf{b}\bar u^\mathbf{c}\bar v^\mathbf{d}\otimes\bar 1_\chi)=\sum_{\mathbf{i}\in\Lambda_{m}}\sum_{\mathbf{j}\in\Lambda'_{n}}\left(\begin{array}{@{\hspace{0pt}}c@{\hspace{0pt}}} \mathbf{a}\\ \mathbf{i}\end{array}\right)\bar x^{\mathbf{a}-\mathbf{i}}\bar y^{\mathbf{b}-\mathbf{j}}\cdot\rho_\chi([\bar w\bar x^{\mathbf{i}}\bar y^{\mathbf{j}}])\cdot \bar u^\mathbf{c}\bar v^\mathbf{d}\otimes\bar 1_\chi,
\end{equation}
where the summation on the right side of  \eqref{rhow} runs over all $(\mathbf{i},\mathbf{j})\in\Lambda_m\times\Lambda'_n$ such that $[\bar w\bar x^{\mathbf{i}}\bar y^{\mathbf{j}}]$ is nonzero and $\text{wt}([\bar w\bar x^{\mathbf{i}}\bar y^{\mathbf{j}}])\geqslant -2$.

We continue the arguments case by case, according to the values of $\text{wt}([\bar w\bar x^{\mathbf{i}}y^{\mathbf{j}}])$ with respect to  summation parameters  $(\mathbf{i},\mathbf{j})\in \Lambda_m\times\Lambda'_n$.

 (Case 1)  wt$([\bar w\bar x^{\mathbf{i}}\bar y^{\mathbf{j}}])\geqslant 0$. 
   We have $|\mathbf{i}|+|\mathbf{j}|\geqslant 1$. Recall that the decomposition  ${\ggg}_{\bbk}=\bigoplus\limits_{i\in\mathbb{Z}}{\ggg}_{\bbk}(i)$ is the Dynkin grading of ${\ggg}_{\bbk}$, and the action of ad\,$h$ keeps $\mathfrak{p}_{\bbk}$ invariant. This implies that $\bar x^{\mathbf{a}-\mathbf{i}}\bar y^{\mathbf{b}-\mathbf{j}}\cdot\rho_\chi([\bar w\bar x^{\mathbf{i}}\bar y^{\mathbf{j}}]) \bar u^\mathbf{c}\bar v^\mathbf{d}\otimes\bar 1_\chi$ is a linear combination of $\bar x^{\mathbf{i'}}\bar y^{\mathbf{j'}}\bar u^\mathbf{c}\bar v^\mathbf{d}\otimes\bar 1_\chi$ with
$$\text{wt}(\bar x^{\mathbf{i'}}\bar y^{\mathbf{j'}}\bar u^\mathbf{c}\bar v^\mathbf{d})=\nu+\text{wt}(\bar x^{\mathbf{a}}\bar y^{\mathbf{b}}\bar u^\mathbf{c}\bar v^\mathbf{d}),$$
and\[\begin{array}{ccl}
\text{deg}_e(\bar x^{\mathbf{i'}}\bar y^{\mathbf{j'}} \bar u^\mathbf{c}\bar v^\mathbf{d})&=&\text{wt}(\bar x^{\mathbf{i'}}\bar y^{\mathbf{j'}} \bar u^\mathbf{c}\bar v^\mathbf{d})+2\text{deg}(\bar x^{\mathbf{i'}}\bar y^{\mathbf{j'}} \bar u^\mathbf{c}\bar v^\mathbf{d})\\
&=&\nu+\text{wt}(\bar x^{\mathbf{a}}\bar y^{\mathbf{b}}\bar u^\mathbf{c}\bar v^\mathbf{d})
+2(|\mathbf{i'}|+|\mathbf{j'}|+|\mathbf{c}|+|\mathbf{d}|).
\end{array}\]

As
$$|\mathbf{i'}|+|\mathbf{j'}|\leqslant |\mathbf{a}|+|\mathbf{b}|-|\mathbf{i}|-|\mathbf{j}|+1,$$
then\[\begin{array}{cl}
&\text{deg}_e(\bar x^{\mathbf{i'}}\bar y^{\mathbf{j'}}\bar u^\mathbf{c}\bar v^\mathbf{d})=\nu+\text{wt}(\bar x^{\mathbf{a}}\bar y^{\mathbf{b}}\bar u^\mathbf{c}\bar v^\mathbf{d})
+2(|\mathbf{i}'|+|\mathbf{j}'|+|\mathbf{c}|+|\mathbf{d}|)\\
\leqslant&\text{wt}(\bar x^{\mathbf{a}}\bar y^{\mathbf{b}}\bar u^\mathbf{c}\bar v^\mathbf{d})+2(|\mathbf{a}|+|\mathbf{b}|+|\mathbf{c}|+|\mathbf{d}|)+
\nu-2(|\mathbf{i}|+|\mathbf{j}|)+2\\
=&2+\nu+d'-2(|\mathbf{i}|+|\mathbf{j}|).
\end{array}\]

(Case 2) wt$([\bar w\bar x^{\mathbf{i}}\bar y^{\mathbf{j}}])=-1$. 
For $k,g\in\mathbb{Z}_+$, set $\bar x_k\in{\ggg}_{\bbk}(n_k)_{\bar{0}},~\bar y_g\in{\ggg}_{\bbk}(n'_g)_{\bar{1}}$, then $\sum\limits_{1\leqslant k\leqslant  m}i_kn_k+\sum\limits_{1\leqslant g\leqslant  n}j_gn'_g=-\nu-1$. Since $\rho_\chi({\mmm}_{\bbk}\cap{\ggg}_{\bbk}(-1))$ annihilates $\bar 1_\chi$, the vector $\bar x^{\mathbf{a}-\mathbf{i}}\bar y^{\mathbf{b}-\mathbf{j}}\cdot\rho_\chi([\bar w\bar x^{\mathbf{i}}\bar y^{\mathbf{j}}])\cdot \bar u^\mathbf{c}\bar v^\mathbf{d}\otimes\bar 1_\chi$ is a linear combination of $\bar x^{\mathbf{a}-\mathbf{i}}\bar y^{\mathbf{b}-\mathbf{j}}\bar u^\mathbf{i'}\bar v^\mathbf{j'}\otimes\bar 1_\chi$ with $|\mathbf{i'}|=|\mathbf{c}|\pm1, \mathbf{j'}=\mathbf{d}$, or $\mathbf{i'}=\mathbf{c}, |\mathbf{j'}|=|\mathbf{d}|\pm1$.

(a) If $|\mathbf{i'}|=|\mathbf{c}|+1, \mathbf{j'}=\mathbf{d}$, or $\mathbf{i'}=\mathbf{c}, |\mathbf{j'}|=|\mathbf{d}|+1$, then $|\mathbf{i}|+|\mathbf{j}|\geqslant 1$,
$$\text{wt}(\bar x^{\mathbf{a}-\mathbf{i}}\bar y^{\mathbf{b}-\mathbf{j}}\bar u^\mathbf{i'}\bar v^\mathbf{j'})= \text{wt}(\bar x^{\mathbf{a}-\mathbf{i}}\bar y^{\mathbf{b}-\mathbf{j}}\bar u^\mathbf{c}\bar v^\mathbf{d})-1
=\nu+\text{wt}(\bar x^{\mathbf{a}}\bar y^{\mathbf{b}}\bar u^\mathbf{c}\bar v^\mathbf{d}),$$
and\[\begin{array}{ccl}
\text{deg}_e(\bar x^{\mathbf{a}-\mathbf{i}}\bar y^{\mathbf{b}-\mathbf{j}}\bar u^\mathbf{i'}\bar v^\mathbf{j'})&=&\text{wt}(\bar x^{\mathbf{a}-\mathbf{i}}\bar y^{\mathbf{b}-\mathbf{j}}\bar u^\mathbf{i'}\bar v^\mathbf{j'})+
2(|\mathbf{a}|-|\mathbf{i}|+|\mathbf{b}|-|\mathbf{j}|+|\mathbf{i'}|+|\mathbf{j'}|)\\
&=&  d'+\nu+2(-|\mathbf{i}|-|\mathbf{j}|+|\mathbf{i'}|+|\mathbf{j'}|-|\mathbf{c}|-|\mathbf{d}|)\\
&=&2+d'+\nu-2(|\mathbf{i}|+|\mathbf{j}|).
\end{array}\]

(b) If $|\mathbf{i'}|=|\mathbf{c}|-1, \mathbf{j'}=\mathbf{d}$, or $\mathbf{i'}=\mathbf{c}, |\mathbf{j'}|=|\mathbf{d}|-1$, then
$$\text{wt}(\bar x^{\mathbf{a}-\mathbf{i}}\bar y^{\mathbf{b}-\mathbf{j}}\bar u^\mathbf{i'}\bar v^\mathbf{j'})
=2+\nu+\text{wt}(\bar x^{\mathbf{a}}\bar y^{\mathbf{b}}\bar u^\mathbf{c}\bar v^\mathbf{d}),$$
and\[\begin{array}{ccl}\text{deg}_e(\bar x^{\mathbf{a}-\mathbf{i}}\bar y^{\mathbf{b}-\mathbf{j}}\bar u^\mathbf{i'}\bar v^\mathbf{j'})&=&\text{wt}(\bar x^{\mathbf{a}-\mathbf{i}}\bar y^{\mathbf{b}-\mathbf{j}}\bar u^\mathbf{i'}\bar v^\mathbf{j'})+
2(|\mathbf{a}|-|\mathbf{i}|+|\mathbf{b}|-|\mathbf{j}|+|\mathbf{i'}|+|\mathbf{j'}|)\\
&=&\nu+\text{wt}(\bar x^{\mathbf{a}}\bar y^{\mathbf{b}}\bar u^\mathbf{c}\bar v^\mathbf{d})+2+
2(|\mathbf{a}|+|\mathbf{b}|+|\mathbf{c}|+|\mathbf{d}|)\\
&&+2(-|\mathbf{c}|-|\mathbf{d}|-|\mathbf{i}|-|\mathbf{j}|+|\mathbf{i'}|+|\mathbf{j'}|)\\
&=&\nu+d'-2(|\mathbf{i}|+|\mathbf{j}|).
\end{array}\]

(Case 3) wt$([\bar w\bar x^{\mathbf{i}}\bar y^{\mathbf{j}}])=-2$. Then $$\bar x^{\mathbf{a}-\mathbf{i}}\bar y^{\mathbf{b}-\mathbf{j}}\cdot\rho_\chi([\bar w\bar x^{\mathbf{i}}\bar y^{\mathbf{j}}])\cdot \bar u^\mathbf{c}\bar v^\mathbf{d}\otimes\bar 1_\chi=\chi([\bar w\bar x^{\mathbf{i}}\bar y^{\mathbf{j}}])\bar x^{\mathbf{a}-\mathbf{i}}\bar y^{\mathbf{b}-\mathbf{j}}\bar u^\mathbf{c}\bar v^\mathbf{d}\otimes\bar 1_\chi.$$
For $k,g\in\mathbb{Z}_+$, set $\bar x_k\in{\ggg}_{\bbk}(n_k)_{\bar{0}},~\bar y_g\in{\ggg}_{\bbk}(n'_g)_{\bar{1}}$. As $\sum\limits_{1\leqslant k\leqslant  m}i_kn_k+\sum\limits_{1\leqslant g\leqslant  n}j_gn'_g=-\nu-2$, one has
$$\text{wt}(\bar x^{\mathbf{a}-\mathbf{i}}\bar y^{\mathbf{b}-\mathbf{j}}\bar u^\mathbf{c}\bar v^\mathbf{d})=2+\nu+\text{wt}(\bar x^{\mathbf{a}}\bar y^{\mathbf{b}}\bar u^\mathbf{c}\bar v^\mathbf{d}),\,\text{deg}_e(\bar x^{\mathbf{a}-\mathbf{i}}\bar y^{\mathbf{b}-\mathbf{j}}\bar u^\mathbf{c}\bar v^\mathbf{d})=2+\nu+d'-2(|\mathbf{i}|+|\mathbf{j}|).$$
\vskip0.1cm
 Step 3:  For concluding  our arguments, we adopt an auxiliary endomorphism.
For $i,j\in\mathbb{Z}$, take $\pi_{ij}$ to be an endomorphism of $Q_\chi^\chi$ defined  via
\begin{align}\label{pi}
\pi_{ij}(\bar x^{\mathbf{a}}\bar y^{\mathbf{b}}\bar u^\mathbf{c}\bar v^\mathbf{d}\otimes\bar1_\chi)=\begin{cases}\bar x^{\mathbf{a}}\bar y^{\mathbf{b}}\bar u^\mathbf{c}\bar v^\mathbf{d}\otimes\bar 1_\chi &\mbox{ if }\text{deg}_e(\bar x^{\mathbf{a}}\bar y^{\mathbf{b}}\bar u^\mathbf{c}\bar v^\mathbf{d})=i\\ &\mbox{ and } \text{wt}(\bar x^{\mathbf{a}}\bar y^{\mathbf{b}}\bar u^\mathbf{c}\bar v^\mathbf{d})=j;\\0 &\mbox{ otherwise}.
\end{cases}
\end{align}

Now we proceed to complete the arguments by reducing contradictions under the beginning assumption. Firstly recall that $\bar w\in{\mmm}_{\bbk}$ by Step 1.

(i) If (S1) or (S2) happens, then $\nu\leqslant -2$. Set $\bar h=\bar h_{\bar{0}}+\bar h_{\bar{1}}\in U_\chi({{\ggg}_{\bbk},e})$. As $\bar w$ is $\mathbb{Z}_2$-homogeneous and $\chi(({\ggg}_{\bbk})_{\bar1})=0$, then $\chi(\bar w)=0$ if $\bar w\in({\ggg}_{\bbk})_{\bar1}$. It follows from the definition of $U_\chi({{\ggg}_{\bbk},e})$ that
\begin{equation}\label{rhochi}
\begin{split}
(\rho_\chi(\bar w)-\chi(\bar w)\text{id}).\bar h(1_\chi)=&(\rho_\chi(\bar w)-\chi(\bar w)\text{id}).
(\bar h_{\bar{0}}+\bar h_{\bar{1}})(\bar 1_\chi)\\
=&\bar h_{\bar{0}}((\bar w-\chi(\bar w)).\bar 1_\chi)
+(-1)^{|\bar w|}\bar h_{\bar{1}}(\bar w.\bar 1_\chi)-\chi(\bar w)\bar h_{\bar{1}}(\bar 1_\chi)\\
=&\bar h_{\bar{0}}((\bar w-\chi(\bar w)).\bar 1_\chi)
+(-1)^{|\bar w|}\cdot \bar h_{\bar{1}}((\bar w-\chi(\bar w)).\bar 1_\chi)\\
=&0.
\end{split}
\end{equation}
For $a\in\mathbb{Z}$ we let $\bar{a}$ denote the residue of $a$ in $\mathbb{F}_p\subseteq\overline{\mathbb{F}}_p={\bbk}$. By the arguments in Step 2 one knows that the terms with $e$-degree $n(\bar h)+\nu$ and weight $N(\bar h)+\nu+2$ only occur as in (Case 3) with wt$([\bar w\bar x^{\mathbf{i}}\bar y^{\mathbf{j}}])=-2$ when $(\rho_\chi(\bar w)-\chi(\bar w)\text{id}).\bar h(\bar 1_\chi)$ is written as a linear combination of the canonical basis of $Q_\chi^\chi$. It follows from \eqref{rhochi} that with an appointment $b_{n+1}=0$,
\begin{equation}\label{0pi}
\begin{split}
0=&\pi_{n(\bar h)+\nu,N(\bar h)+\nu+2}((\rho_\chi(\bar w)-\chi(\bar w)\text{id}). \bar h(\bar 1_\chi))\\
=&(\sum\limits_{(\mathbf{a},\mathbf{b},\mathbf{c},\mathbf{d})\in\Lambda_{\bar h}^\text{max}}\lambda_{\mathbf{a},\mathbf{b},\mathbf{c},\mathbf{d}}
\sum\limits_{i=1}^m(-1)^{|\bar w|(\sum\limits_{j=1}^nb_j)}\bar{a}_i \bar x^{\mathbf{a}-\mathbf{e}_i}\bar y^{\mathbf{b}}\cdot\chi([\bar w,\bar x_i]) \bar u^\mathbf{c}\bar v^\mathbf{d})\otimes\bar 1_\chi+\\
&(\sum\limits_{(\mathbf{a},\mathbf{b},\mathbf{c},\mathbf{d})\in\Lambda_{\bar h}^\text{max}}\lambda_{\mathbf{a},\mathbf{b},\mathbf{c},\mathbf{d}}
\sum\limits_{i=1}^n(-1)^{|\bar w|(1+\sum\limits_{j=1}^nb_j)+\sum\limits_{j=i+1}^nb_{j}} \bar x^{\mathbf{a}}\bar y^{\mathbf{b}-\mathbf{e}_i}\cdot\chi([\bar w,\bar y_i])\\
&\bar u^\mathbf{c}\bar v^\mathbf{d})\otimes\bar 1_\chi.
\end{split}
\end{equation}

On the other hand, we have

(a) if $\bar w=\bar w_k$, then the most-right side of \eqref{0pi} becomes $$\sum\limits_{(\mathbf{a},\mathbf{b},\mathbf{c},\mathbf{d})\in\Lambda_{\bar h}^\text{max}}\lambda_{\mathbf{a},\mathbf{b},\mathbf{c},\mathbf{d}}
\bar{a}_k\bar x^{\mathbf{a}-\mathbf{e}_k}\bar y^{\mathbf{b}}\bar u^\mathbf{c}\bar v^\mathbf{d}\otimes\bar 1_\chi\neq0;$$

(b) if $\bar w=\bar w'_k$, then the most-right side of \eqref{0pi} becomes $$\sum\limits_{(\mathbf{a},\mathbf{b},\mathbf{c},\mathbf{d})\in\Lambda_{\bar h}^\text{max}}\lambda_{\mathbf{a},\mathbf{b},\mathbf{c},\mathbf{d}}
(-1)^{1+\sum\limits_{j=1}^nb_j+\sum\limits_{j=k+1}^nb_{j}}\bar x^{\mathbf{a}}\bar y^{\mathbf{b}-\mathbf{e}_k}\cdot \bar u^\mathbf{c}\bar v^\mathbf{d}\otimes\bar 1_\chi\neq0,$$
a contradiction. This is to say,  both (S1) and (S2) impossibly  happen.

(ii) If (S3) or (S4) happens, then $\nu=-1$ and $\chi(\bar w)=0$. It follows from the definition of $U_\chi({{\ggg}_{\bbk},e})$ that
\begin{equation}\label{rhowchi}
\rho_\chi(\bar w).\bar h(\bar 1_\chi)=\rho_\chi(\bar w).(\bar h_{\bar{0}}+\bar h_{\bar{1}})(\bar 1_\chi)=\bar h_{\bar{0}}(\bar w.\bar 1_\chi)
+(-1)^{|\bar w|}\bar h_{\bar{1}}(\bar w.\bar 1_\chi)=0.
\end{equation}
With the appointment that $b_{n+1}=0, d_{0}=0$, we have
\begin{equation}\label{exten}
\begin{split}
\rho_\chi(\bar w).\bar h(\bar 1_\chi)=&(\sum\limits_{(\mathbf{a},\mathbf{b},\mathbf{c},\mathbf{d})\in\Lambda_{\bar h}^{n(\bar h)}}\lambda_{\mathbf{a},\mathbf{b},\mathbf{c},\mathbf{d}}(
\sum\limits_{i=1}^m(-1)^{|\bar w|(\sum\limits_{j=1}^nb_j)}\bar{a}_i\bar x^{\mathbf{a}-\mathbf{e}_i}\bar y^{\mathbf{b}}\cdot\rho_\chi([\bar w,\bar x_i])\\
&\cdot \bar u^\mathbf{c}\bar v^\mathbf{d}+\sum\limits_{i=1}^n(-1)^{|\bar w|(1+\sum\limits_{j=1}^nb_j)+\sum\limits_{j=i+1}^nb_{j}}\bar x^{\mathbf{a}}\bar y^{\mathbf{b}-\mathbf{e}_i}\cdot\rho_\chi([\bar w,\bar y_i])\cdot \bar u^\mathbf{c}\bar v^\mathbf{d}+\\
&\sum\limits_{i=1}^s(-1)^{|\bar w|(\sum\limits_{j=1}^nb_j)}\bar{c}_i\bar x^{\mathbf{a}}\bar y^{\mathbf{b}}\cdot\chi([\bar w,\bar u_i])\bar u^{\mathbf{c}-\mathbf{e}_i}\bar v^{\mathbf{d}}+\\&\sum\limits_{i=1}^{\lfloor \frac{\sfr}{2}\rfloor}(-1)^{|\bar w|(\sum\limits_{j=1}^nb_j)+\sum\limits_{j=1}^{i-1}d_{j}} \bar x^{\mathbf{a}}\bar y^{\mathbf{b}}\cdot\chi([\bar w,\bar v_i])\bar u^{\mathbf{c}}\bar v^{\mathbf{d}-\mathbf{e}_i})+\\
&\sum\limits_{|(\mathbf{i},\mathbf{j},\mathbf{k},\mathbf{l})|_e\leqslant  n(\bar h)-2}\beta_{\mathbf{i},\mathbf{j},\mathbf{k},\mathbf{l}}\cdot \bar x^{\mathbf{i}}\bar y^{\mathbf{j}}\bar u^\mathbf{k}\bar v^\mathbf{l})\otimes\bar1_\chi.
\end{split}
\end{equation}
When $(\rho_\chi(\bar w)-\chi(\bar w)\text{id})\cdot\bar h(\bar 1_\chi)$ is written as a linear combination of the canonical basis of $Q_\chi^\chi$, it is immediate from the arguments in Step 2 that the terms with $e$-degree $n(\bar h)-1$ and weight $N(\bar h)+1$ only occur as in (Case 2)(b) with wt$([\bar w\bar x^{\mathbf{i}}\bar y^{\mathbf{j}}])=-1$. By \eqref{exten} we have

(a) if $\bar w=\bar z'_k$, then
$$\pi_{n(\bar h)-1,N(\bar h)+1}(\rho_\chi(\bar w).\bar h(\bar 1_\chi))=\sum
\limits_{(\mathbf{a},\mathbf{b},\mathbf{c},\mathbf{d})\in\Lambda_{\bar h}^{n(\bar h)}}\lambda_{\mathbf{a},\mathbf{b},\mathbf{c},\mathbf{d}}
\bar{c}_k\bar x^{\mathbf{a}}\bar y^{\mathbf{b}}\bar u^{\mathbf{c}-\mathbf{e}_k}\bar v^{\mathbf{d}}\otimes\bar 1_\chi\neq0;$$

(b) if $\bar w=\bar z^{''}_k$, then
\[\begin{array}{ll}
&\pi_{n(\bar h)-1,N(\bar h)+1}(\rho_\chi(\bar w).\bar h(\bar 1_\chi))\\
=&\sum
\limits_{(\mathbf{a},\mathbf{b},\mathbf{c},\mathbf{d})\in\Lambda_{\bar h}^{n(\bar h)}}\lambda_{\mathbf{a},\mathbf{b},\mathbf{c},\mathbf{d}}
(-1)^{\sum\limits_{j=1}^nb_j+\sum\limits_{j=1}^{k-1}d_{j}}\bar x^{\mathbf{a}}\bar y^{\mathbf{b}}\bar u^{\mathbf{c}}\bar v^{\mathbf{d}-\mathbf{e}_k}\otimes\bar1_\chi
\neq0,
\end{array}\]
which contradicts to \eqref{rhowchi}. This is to say, both (S3) and (S4) impossibly happen.

Summing up, we find the beginning assumption is absurd. The proof is completed.
\end{proof}

\begin{rem} From Lemma \ref{hw} on, we can see clearly that the parity of $\sfr=\text{dim}\,\ggg_\bbf(-1)_\bo$ is decisive to the variation of  structure of the finite $W$-superalgebra $U(\ggg_\bbf,e)$ for $\bbf=\bbc$ and the reduced $W$-superalgebra $U_\chi(\ggg_\bbk,e)$ for $\bbf=\bbk$.
\end{rem}

\subsection{The construction theory of reduced $W$-superalgebras in positive characteristic}

In this part we will finally  obtain the PBW structure theorem for the reduced $W$-superalgebra $U_\chi({\ggg}_{\bbk},e)$ which presents both the generators and relations arising from the Lie superalgebra $\ggg_\bbk^e$, and a PBW basis of $U_\chi(\ggg_\bbk,e)$.

Let us recall some information already known on the structure of $U_\chi(\ggg_\bbk,e)$. For $k\in\mathbb{Z}_+$ let $H^k$ denote the ${\bbk}$-linear span of all $0\neq \bar h\in U_\chi({\ggg}_{\bbk},e)$ with $n(\bar h)\leqslant  k$ in $U_\chi({\ggg}_{\bbk},e)$. It follows readily from Lemma~\ref{commutative relations k2} that $H^i\cdot H^j\subseteq H^{i+j}$ for all $i,j\in\mathbb{Z}_+$. In other words, $\{H^i|i\in\mathbb{Z}_+\}$ is a filtration of the algebra $U_\chi({\ggg}_{\bbk},e)$ and obviously $U_\chi({\ggg}_{\bbk},e)=H^k$ for all $k\gg0$. We set $H^{-1}=0$ and let $\text{gr}\,(U_\chi({\ggg}_{\bbk},e))=\sum\limits_{i\geqslant 0}H^i/H^{i-1}$ denote the corresponding graded algebra. Lemma~\ref{commutative relations k2} implies that the ${\bbk}$-algebra $\text{gr}\,(U_\chi({\ggg}_{\bbk},e))$ is supercommutative.

\begin{prop}\label{reduced basis} Keep the notations as above. The following statements hold.

(1) If $\sfr$ is even, then for any $(\mathbf{a},\mathbf{b})\in\Lambda_l\times\Lambda'_q$ there is $\bar h_{\mathbf{a},\mathbf{b}}\in U_\chi(\mathfrak{g}_\mathds{k},e)$ such that $\Lambda_{\bar h_{\mathbf{a},\mathbf{b}}}^{\text{max}}=\{(\mathbf{a},\mathbf{b})\}$. The vectors $\{\bar h_{\mathbf{a},\mathbf{b}}\mid(\mathbf{a},\mathbf{b})\in\Lambda_l\times\Lambda'_q\}$ form a basis of $U_\chi(\mathfrak{g}_\mathds{k},e)$ over $\mathds{k}$.

(2) If $\sfr$ is odd, then for any $(\mathbf{a},\mathbf{b},c)\in\Lambda_l\times\Lambda'_q\times\Lambda'_1$ there is $\bar h_{\mathbf{a},\mathbf{b},c}\in U_\chi(\mathfrak{g}_\mathds{k},e)$ such that $\Lambda_{\bar h_{\mathbf{a},\mathbf{b},c}}^{\text{max}}=\{(\mathbf{a},\mathbf{b},c)\}$. The vectors $\{\bar h_{\mathbf{a},\mathbf{b},c}\mid(\mathbf{a},\mathbf{b},c)\in\Lambda_l\times\Lambda'_q\times\Lambda'_1\}$ form a basis of $U_\chi(\mathfrak{g}_\mathds{k},e)$ over $\mathds{k}$.
\end{prop}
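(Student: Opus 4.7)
The plan is an upper--lower bound sandwich: use the Morita equivalence of Theorem~\ref{matrix}(1) to pin down $\dim_\bbk U_\chi(\ggg_\bbk,e)$ exactly, use Lemma~\ref{hw} to bound it above by the cardinality of the proposed index set, and then extract the required basis by reading off leading terms within each bigraded filtration piece.

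First, from $U_\chi(\ggg_\bbk)\cong\text{Mat}_\delta(U_\chi(\ggg_\bbk,e))$ one has $\dim_\bbk U_\chi(\ggg_\bbk,e)=\dim_\bbk U_\chi(\ggg_\bbk)/\delta^2$, with $\dim U_\chi(\ggg_\bbk)=p^{\dim(\ggg_\bbk)_\bz}\,2^{\dim(\ggg_\bbk)_\bo}$ and $\delta=p^{\dim(\mmm_\bbk)_\bz}\,2^{\dim(\mmm_\bbk)_\bo}$. Substituting the values of $\dim(\mmm_\bbk)_\bz$ and $\dim(\mmm_\bbk)_\bo$ recorded in Remark~\ref{centralizer} (with the odd-case shift by $1/2$ when $\sfr$ is odd), together with $d_0=\dim(\ggg_\bbk)_\bz-l$ and $d_1=\dim(\ggg_\bbk)_\bo-q$, a short computation yields
$$
\dim_\bbk U_\chi(\ggg_\bbk,e)=\begin{cases} p^l\cdot 2^q, & \sfr\text{ even},\\ p^l\cdot 2^{q+1}, & \sfr\text{ odd},\end{cases}
$$
which coincides with $|\Lambda_l\times\Lambda'_q|$ and $|\Lambda_l\times\Lambda'_q\times\Lambda'_1|$ respectively.

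Second, I would refine the Kazhdan filtration $\{H^k\}$ by also recording the maximum weight $N(\bar h)$: set $H^{(k,w)}$ to be the span of those $\bar h\in U_\chi(\ggg_\bbk,e)$ with $n(\bar h)<k$, or $n(\bar h)=k$ and $N(\bar h)\le w$. For nonzero $\bar h$ at bidegree $(k,w)$, the top-$e$-degree and top-weight component of $\bar h(\bar 1_\chi)$ is supported exactly on $\Lambda^{\text{max}}_{\bar h}$, which by Lemma~\ref{hw} lies in $\Lambda_l\times\Lambda'_q$ (respectively $\Lambda_l\times\Lambda'_q\times\Lambda'_1$ in the odd case, with the extra factor recording the power of $\bar v_{(\sfr+1)/2}$). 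Hence the leading-term map $H^{(k,w)}/H^{(k,w-1)}\hookrightarrow\text{Span}\{\bar x^{\mathbf a}\bar y^{\mathbf b}\otimes\bar 1_\chi:|(\mathbf a,\mathbf b)|_e=k,\ \text{wt}(\bar x^{\mathbf a}\bar y^{\mathbf b})=w\}$ (and its odd analogue) is injective, so summing over $(k,w)$ gives $\dim_\bbk U_\chi(\ggg_\bbk,e)\le|\Lambda_l\times\Lambda'_q|$. The previous paragraph forces equality, and hence the leading-term map is an isomorphism at every bidegree.

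For each $(\mathbf a,\mathbf b)\in\Lambda_l\times\Lambda'_q$ of $e$-degree $d$ and weight $w$, I then pick any preimage $\bar h_{\mathbf a,\mathbf b}\in H^{(d,w)}$ of $\bar x^{\mathbf a}\bar y^{\mathbf b}\otimes\bar 1_\chi$ under that isomorphism; by construction $\Lambda^{\text{max}}_{\bar h_{\mathbf a,\mathbf b}}=\{(\mathbf a,\mathbf b)\}$. Distinct singleton leading terms give linear independence via the bigraded filtration, and the matched cardinality promotes this to a basis. The odd case is identical with $\bar v_{(\sfr+1)/2}$ adjoined. The main obstacle will be confirming injectivity of the bigraded leading-term map: a naive $e$-degree-only filtration would not suffice, because a single $e$-degree stratum carries monomials of many different weights, and only after refining by weight does the constrained support afforded by Lemma~\ref{hw} translate into the desired dimensional upper bound. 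Once that refinement is in place, the remaining steps reduce to standard linear algebra together with the exact dimension input from Theorem~\ref{matrix}(1).
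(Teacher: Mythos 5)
Your proposal is correct and follows essentially the same route as the paper's proof: the paper also introduces the lexicographically ordered bifiltration $H^{a,b}$ by $(n(\bar h),N(\bar h))$, uses Lemma~\ref{hw} to show the associated leading-term map $\pi_B$ lands injectively in $(U_\chi(\ggg_\bbk^e)\oplus U_\chi(\ggg_\bbk^e)\bar v_{\frac{\sfr+1}{2}})\otimes\bar 1_\chi$, and matches dimensions via Remark~\ref{centralizer} and Theorem~\ref{matrix}(1) to conclude that $\pi_B$ is an isomorphism, taking preimages of the PBW monomials to define $\bar h_{\mathbf a,\mathbf b,c}$. The only cosmetic difference is that the paper packages your degreewise leading-term maps into a single map by choosing a basis adapted to the bifiltration.
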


\begin{proof} We will give the proof for the second situation. The first one can be proved  similarly but more simply.

Assume $\sfr$ is odd.
Given $(a,b)\in\mathbb{Z}_+^2$, let $H^{a,b}$ denote the subspace of $U_\chi({\ggg}_{\bbk},e)$ spanned by $H^{a-1}$ and all $\bar h\in U_\chi({\ggg}_{\bbk},e)$ with $n(\bar h)=a,~N(\bar h)\leqslant  b$. Order the elements in $\mathbb{Z}_+^2$ lexicographically. By construction, $H^{a,b}\subseteq H^{c,d}$ whenever $(a,b)\prec(c,d)$. Note that $U_\chi(\ggg_\bbk,e)$ is finite-dimensional.  We know that $U_\chi({\ggg}_{\bbk},e)$ has basis $B:=\bigsqcup_{(i,j)}B_{i,j}$ such that $n(\mu)=i,~N(\mu)=j$ whenever $\mu\in B_{i,j}$.

Recall the mapping $\pi_{ij}:Q_\chi^\chi\rightarrow Q_\chi^\chi$ in \eqref{pi}.
Define the linear map $\pi_B: U_\chi({\ggg}_{\bbk},e)\longrightarrow Q_\chi^\chi$ via $\pi_B(\mu)=\pi_{i,j}(\mu(\bar1_\chi))$ for any  $\mu\in B_{i,j}$. It follows  from Lemma~\ref{hw} that $\pi_B$ maps $U_\chi({\ggg}_{\bbk},e)$ into the subspace of $(U_\chi({\ggg}_{\bbk}^e)\oplus U_\chi({\ggg}_{\bbk}^e)\bar v_{\frac{\sfr+1}{2}})\otimes\bar 1_\chi$ of $Q_\chi^\chi$. By construction, $\pi_B$ is injective. On the other hand,
\[\begin{array}{ccl}\text{\underline{dim}}\,U_\chi({\ggg}_{\bbk},e)&=&(\frac{p^{\text{dim}\,({\ggg}_{\bbk})_{\bar{0}}}}
{(p^{\frac{\text{dim}\,({\ggg}_{\bbk})_{\bar{0}}-\text{dim}\,({\ggg}_{\bbk}^e)_{\bar{0}}}{2}})^2},~
\frac{2^{\text{dim}\,({\ggg}_{\bbk})_{\bar{1}}}}{(2^{\frac{\text{dim}\,({\ggg}_{\bbk})_{\bar{1}}-\text{dim}\,({\ggg}_{\bbk}^e)_{\bar{1}}-1}{2}})^2})\\
&=&(p^{\text{dim}\,({\ggg}_{\bbk}^e)_{\bar{0}}},~2^{\text{dim}\,({\ggg}_{\bbk}^e)_{\bar{1}}+1})\\
&=&\text{\underline{dim}}\,
(U_\chi({\ggg}_{\bbk}^e)\oplus U_\chi({\ggg}_{\bbk}^e)\bar v_{\frac{\sfr+1}{2}})\otimes\bar 1_\chi,
\end{array}\]due to Remark~\ref{centralizer} and Theorem~\ref{matrix}(1).  Thus $\pi_B: U_\chi({\ggg}_{\bbk},e)\longrightarrow (U_\chi({\ggg}_{\bbk}^e)\oplus U_\chi({\ggg}_{\bbk}^e)\bar v_{\frac{\sfr+1}{2}})\otimes\bar 1_\chi$ is a linear isomorphism. For $(\mathbf{a},\mathbf{b},c)=(a_1,\cdots,a_l,b_1,\cdots,b_q,c)\in\Lambda_l\times\Lambda'_q\times\Lambda'_1$ set $$h_{\mathbf{a},\mathbf{b},c}=\pi_B^{-1}(\bar x_1^{a_1}\cdots \bar x_l^{a_l}\bar y_1^{b_1}\cdots \bar y_q^{b_q}\bar v_{\frac{\sfr+1}{2}}^c\otimes\bar1_\chi).$$ By the bijectivity of $\pi_B$ and the PBW theorem (applied to $U_\chi({\ggg}_{\bbk}^e)$), the vectors $h_{\mathbf{a},\mathbf{b},c}$ with $(\mathbf{a},\mathbf{b},c)\in\Lambda_l\times\Lambda'_q\times\Lambda'_1$ form a basis of $U_\chi({\ggg}_{\bbk},e)$, while from the definition of $\pi_B$ it follows that $\Lambda_{\bar h_{\mathbf{a},\mathbf{b},c}}^{\text{max}}=\{(\mathbf{a},\mathbf{b},c)\}$ for any $(\mathbf{a},\mathbf{b},c)\in\Lambda_l\times\Lambda'_q\times\Lambda'_1$.
\end{proof}

\begin{corollary}\label{rg}\begin{itemize}
\item[(1)]
There exist even elements $\theta_1,\cdots,\theta_l\in U_\chi({\ggg}_{\bbk},e)_{\bar0}$ and odd elements $ \theta_{l+1},\cdots,\theta_{l+q}\in U_\chi({\ggg}_{\bbk},e)_{\bar1}$ such that
\begin{itemize}
\item[(a)]\begin{equation*}
\begin{array}{ll}
&\theta_k(\bar 1_\chi)\\=&(\bar x_k+\sum\limits_{\mbox{\tiny $\begin{array}{c}|\mathbf{a},\mathbf{b},\mathbf{c},\mathbf{d}|_e=m_k+2,\\|\mathbf{a}|
+|\mathbf{b}|+|\mathbf{c}|+|\mathbf{d}|\geqslant 2\end{array}$}}\lambda^k_{\mathbf{a},\mathbf{b},\mathbf{c},\mathbf{d}}\bar x^{\mathbf{a}}
\bar y^{\mathbf{b}}\bar u^{\mathbf{c}}\bar v^{\mathbf{d}}+\sum\limits_{|\mathbf{a},\mathbf{b},\mathbf{c},\mathbf{d}|_e<m_k+2}
\lambda^k_{\mathbf{a},\mathbf{b},\mathbf{c},\mathbf{d}}\bar x^{\mathbf{a}}
\bar y^{\mathbf{b}}\bar u^{\mathbf{c}}\bar v^{\mathbf{d}})\otimes\bar 1_\chi,
\end{array}
\end{equation*}
where $\bar x_k\in{\ggg}^e_{\bbk}(m_k)_{\bar0}$ for $1\leqslant  k\leqslant  l$.
\item[(b)]\begin{equation*}
\begin{array}{ll}
&\theta_{l+k}(\bar 1_\chi)\\=&(\bar y_k+\sum\limits_{\mbox{\tiny $\begin{array}{c}|\mathbf{a},\mathbf{b},\mathbf{c},\mathbf{d}|_e=n_k+2,\\|\mathbf{a}|
+|\mathbf{b}|+|\mathbf{c}|+|\mathbf{d}|\geqslant 2\end{array}$}}\lambda^k_{\mathbf{a},\mathbf{b},\mathbf{c},\mathbf{d}}\bar x^{\mathbf{a}}
\bar y^{\mathbf{b}}\bar u^{\mathbf{c}}\bar v^{\mathbf{d}}+\sum\limits_{|\mathbf{a},\mathbf{b},\mathbf{c},\mathbf{d}|_e<n_k+2}
\lambda^k_{\mathbf{a},\mathbf{b},\mathbf{c},\mathbf{d}}\bar x^{\mathbf{a}}
\bar y^{\mathbf{b}}\bar u^{\mathbf{c}}\bar v^{\mathbf{d}})\otimes\bar 1_\chi,
\end{array}
\end{equation*}
where $\bar y_k\in{\ggg}^e_{\bbk}(n_k)_{\bar1}$ for $1\leqslant  k\leqslant  q$.
\end{itemize}
\item[(2)] If additionally $\sfr=\dim{\ggg}_{\bbk}(-1)_{\bar{1}}$ is odd, then there is an odd element $\theta_{l+q+1}\in U_\chi({\ggg}_{\bbk},e)_{\bar1}$ such that$$\theta_{l+q+1}(\bar 1_\chi)=\bar v_{\frac{\sfr+1}{2}}\otimes\bar 1_\chi.$$

\end{itemize}
All the coefficients $\lambda^k_{\mathbf{a},\mathbf{b},\mathbf{c},\mathbf{d}}$ above are in ${\bbk}$. Moreover, $\lambda^k_{\mathbf{a},\mathbf{b},\mathbf{c},\mathbf{d}}=0$ if $(\mathbf{a},\mathbf{b},\mathbf{c},\mathbf{d})$ is such that $a_{l+1}=\cdots=a_m=b_{q+1}=\cdots=b_n=c_1=\cdots=c_s=d_1=\cdots=d_{\lceil\frac{\sfr}{2}\rceil}=0$.
\end{corollary}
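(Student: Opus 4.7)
The plan is to construct each $\theta_i$ as the preimage of its named leading monomial under a ``centralizer-type'' projection whose bijectivity is driven by Lemma \ref{hw}. Introduce the linear map $\Phi\colon U_\chi(\ggg_\bbk,e)\to T$, where $T:=U_\chi(\ggg_\bbk^e)\otimes\bar 1_\chi$ when $\sfr$ is even and $T:=(U_\chi(\ggg_\bbk^e)\oplus U_\chi(\ggg_\bbk^e)\bar v_{\frac{\sfr+1}{2}})\otimes\bar 1_\chi$ when $\sfr$ is odd, sending $\bar h$ to the partial sum of its PBW expansion $\bar h(\bar 1_\chi)=\sum\lambda_{\mathbf{a},\mathbf{b},\mathbf{c},\mathbf{d}}\bar x^{\mathbf{a}}\bar y^{\mathbf{b}}\bar u^{\mathbf{c}}\bar v^{\mathbf{d}}\otimes\bar 1_\chi$ that retains only those monomials whose indices obey $a_{l+1}=\cdots=a_m=b_{q+1}=\cdots=b_n=c_1=\cdots=c_s=0$ together with $d_1=\cdots=d_{\lfloor\sfr/2\rfloor}=0$. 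Lemma \ref{hw} forces $\Phi$ to be injective: for $\bar h\neq 0$ the max-$e$-degree max-weight term of $\bar h(\bar 1_\chi)$ already lies in $T$, hence $\Phi(\bar h)\neq 0$. Using Theorem \ref{matrix}(1) together with Remark \ref{centralizer}, both source and $T$ have dimension $p^{\dim(\ggg_\bbk^e)_{\bar 0}}\cdot 2^{\dim(\ggg_\bbk^e)_{\bar 1}+\epsilon}$ with $\epsilon=1$ if $\sfr$ is odd and $\epsilon=0$ otherwise, so $\Phi$ is a bijection.

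Setting $\theta_k:=\Phi^{-1}(\bar x_k\otimes\bar 1_\chi)$ for $1\leq k\leq l$, $\theta_{l+k}:=\Phi^{-1}(\bar y_k\otimes\bar 1_\chi)$ for $1\leq k\leq q$, and (when $\sfr$ is odd) $\theta_{l+q+1}:=\Phi^{-1}(\bar v_{\frac{\sfr+1}{2}}\otimes\bar 1_\chi)$, the vanishing of $\lambda^k_{\mathbf{a},\mathbf{b},\mathbf{c},\mathbf{d}}$ on the pure-centralizer index tuples described in the corollary is built in by construction, because any such contribution other than the named leading one would augment $\Phi(\theta_k)$ beyond a single monomial. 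By Lemma \ref{hw} applied to $\theta_k$ the max-$e$-degree max-weight monomial of $\theta_k(\bar 1_\chi)$ lies in $T$, and so (by our prescribed $\Phi$-value) must coincide with the chosen leading term; this pins down the max $e$-degree to $m_k+2$ (resp.\ $n_k+2$) and the max weight to $m_k$ (resp.\ $n_k$). The total-degree bound $|\mathbf{a}|+|\mathbf{b}|+|\mathbf{c}|+|\mathbf{d}|\geq 2$ for same-$e$-degree corrections is forced by the identity $\text{deg}_e=\text{wt}+2\,\text{deg}$: any degree-one monomial at $e$-degree $m_k+2$ would also sit at weight $m_k$, contradicting the uniqueness of the leading term.

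For $\theta_{l+q+1}$ the sharper equality $\theta_{l+q+1}(\bar 1_\chi)=\bar v_{\frac{\sfr+1}{2}}\otimes\bar 1_\chi$ follows by $e$-degree bookkeeping. The leading term has $e$-degree $1$, so any correction sits at $e$-degree $0$ or $1$; $e$-degree $0$ yields only a scalar, incompatible with the odd parity of $\theta_{l+q+1}$; at $e$-degree $1$ all single-letter monomials have weight $-1$, but Lemma \ref{hw} together with $\Phi(\theta_{l+q+1})=\bar v_{\frac{\sfr+1}{2}}\otimes\bar 1_\chi$ excludes the $\bar u_i$'s and $\bar v_j$'s ($j\leq t$), leaving $\bar v_{\frac{\sfr+1}{2}}$ as the only possibility; two-letter monomials have $e$-degree at least $2$. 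The principal obstacle is establishing bijectivity of $\Phi$, for which Lemma \ref{hw} and the Kazhdan-filtered dimension count do the heavy lifting.
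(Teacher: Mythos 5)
Your construction is essentially the paper's own: both arguments combine the leading-term Lemma \ref{hw} with the dimension count coming from Theorem \ref{matrix}(1) and Remark \ref{centralizer} to produce a linear bijection from $U_\chi(\ggg_\bbk,e)$ onto $(U_\chi(\ggg_\bbk^e)\oplus U_\chi(\ggg_\bbk^e)\bar v_{\frac{\sfr+1}{2}})\otimes\bar 1_\chi$ (this is exactly Proposition \ref{reduced basis}), and then pull back the single monomials $\bar x_k\otimes\bar 1_\chi$, $\bar y_k\otimes\bar 1_\chi$, $\bar v_{\frac{\sfr+1}{2}}\otimes\bar 1_\chi$. Your $\Phi$ (retain \emph{all} pure-centralizer monomials) is a mild variant of the paper's $\pi_B$ (retain only the top $e$-degree/weight component of each adapted basis vector); it is arguably cleaner, since it makes the vanishing of $\lambda^k_{\mathbf{a},\mathbf{b},\mathbf{c},\mathbf{d}}$ on \emph{all} pure-centralizer tuples, and the exact equality $\theta_{l+q+1}(\bar 1_\chi)=\bar v_{\frac{\sfr+1}{2}}\otimes\bar 1_\chi$, immediate from the construction rather than requiring a separate filtration argument. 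The one assertion you use but never prove is the $\mathbb{Z}_2$-homogeneity of the $\theta_i$ (you explicitly invoke ``the odd parity of $\theta_{l+q+1}$'' to exclude the degree-zero term); this is precisely the point on which the paper's proof spends its effort, using the even isomorphism $U_\chi(\ggg_\bbk,e)\cong(Q_\chi^\chi)^{\mathrm{ad}\,\mmm_\bbk}$ of Proposition \ref{invariant} to see that the homogeneous components $\theta_{\bar0},\theta_{\bar1}$ of any $\theta\in U_\chi(\ggg_\bbk,e)$ again lie in $U_\chi(\ggg_\bbk,e)$. Once that is in place, $\Phi$ is parity-preserving and $\Phi^{-1}$ of a homogeneous monomial is automatically homogeneous, so the omission is easily repaired; note also that the constant term of $\theta_{l+q+1}(\bar 1_\chi)$ is already excluded by $\Phi$ itself (the empty monomial is a pure-centralizer one), so your parity appeal there is dispensable.
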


\begin{proof}
The existence of all the elements in the corollary is an immediate consequence of Proposition~\ref{reduced basis}. The remaining thing is to prove the $\mathbb{Z}_2$-homogeneity for the elements in Statements (a) and (b) of (1). In fact, it can be obtained by applying Proposition~\ref{invariant} (with the case $\eta=\chi$) directly. Firstly note that each element $\theta\in U_\chi({\ggg}_{\bbk},e)$ can be written as $\theta=\theta_{\bar0}+\theta_{\bar1}$ where $\theta_{\bar0}(\bar1_\chi)\in (Q_\chi^\chi)_{\bar0}$ and $\theta_{\bar1}(\bar1_\chi)\in (Q_\chi^\chi)_{\bar1}$. Since both the ${\bbk}$-algebras $U_\chi({\ggg}_{\bbk},e)$ and $(Q_\chi^\chi)^{\text{ad}{\mmm}_{\bbk}}$ are $\mathbb{Z}_2$-graded and the mapping $$\varphi:~U_\chi({\ggg}_{\bbk},e)\overset{\sim}{\longrightarrow}(Q_\chi^\chi)^{\text{ad}{\mmm}_{\bbk}}$$ in Proposition~\ref{invariant} is even, it follows from $$\varphi(\theta)=\varphi(\theta_{\bar0})+\varphi(\theta_{\bar1})= \theta_{\bar0}(\bar 1_\chi)+\theta_{\bar1}(\bar 1_\chi)\in(Q_\chi^\chi)^{\text{ad}{\mmm}_{\bbk}}$$
that $\varphi(\theta_{\bar0})\in(Q_\chi^\chi)^{\text{ad}{\mmm}_{\bbk}}_{\bar0}$ and $\varphi( \theta_{\bar1})\in(Q_\chi^\chi)^{\text{ad}{\mmm}_{\bbk}}_{\bar1}$, i.e. $\theta_{\bar0}\in U_\chi({\ggg}_{\bbk},e)_{\bar0}$ and $\theta_{\bar1}\in U_\chi({\ggg}_{\bbk},e)_{\bar1}$.

Thus, for any $\theta\in U_\chi({\ggg}_{\bbk},e)$, if the leading term of $\theta(\bar 1_\chi)$ is $\bar x_i$ with $1\leqslant  i\leqslant  l$ (thereby in $(\ggg_\bbk)_\bz$), one can choose $\theta_{\bar0}$ as the desired element in $U_\chi({\ggg}_{\bbk},e)_{\bar0}$. Similarly, if the leading term of $\theta(\bar 1_\chi)$ is $\bar y_i$ with $1\leqslant  i\leqslant  q$ (thereby in $({\ggg}_{\bbk})_{\bar1}$), one can choose $\theta_{ \bar1}$ as the desired element in $U_\chi({\ggg}_{\bbk},e)_{\bar1}$.
\end{proof}

Recall that $\{\bar x_1,\cdots,\bar x_l\}$ and $\{\bar y_1,\cdots,\bar y_q\}$ are ${\bbk}$-basis of $({\ggg}^e_{\bbk})_{\bar{0}}$ and $({\ggg}^e_{\bbk})_{\bar{1}}$, respectively; and there is another element $\bar v_{\frac{\sfr+1}{2}}\in({\ggg}_{\bbk}(-1)_{\bar{1}}^{\prime})^{\bot }\subseteq {\ggg}_{\bbk}(-1)_{\bar{1}}$ when $\sfr$ is odd. Set
\[\bar Y_i:=\left\{
\begin{array}{ll}
\bar x_i&\text{if}~1\leqslant  i\leqslant  l;\\
\bar y_{i-l}&\text{if}~l+1\leqslant  i\leqslant  l+q;\\
\bar v_{\frac{\sfr+1}{2}}&\text{if}~i=l+q+1 \mbox{ whenever } \sfr\mbox{ is odd}.
\end{array}
\right.
\]
   With the new notations, the subalgebra $\ggg_\bbk^e$ admits a basis
$$\bar Y_1,\cdots,\bar Y_{l+q}.$$
Moreover, the element $\bar Y_{l+q+1}\notin\ggg_\bbk^e$ occurs only in the case when $\sfr$ is odd. We can assume that the homogenous element $\bar Y_i$ is in $\ggg_{\bbk}(m_i)$ for  $1\leqslant  i\leqslant  l+q+1$.   Let $\bar{\theta}_i$ denote the image of $\theta_i\in U_\chi({\ggg}_{\bbk},e)$ in $\text{gr}\,(U_\chi({\ggg}_{\bbk},e))$.

Now we are in the position to introduce the main result of this section, which provides the generators and relations for the reduced $W$-superalgebra $U_\chi({\ggg}_{\bbk},e)$,  along with the PBW basis. We will exploit the arguments of \cite[Theorem 3.4]{P2} in our super case. We will further see variation arising from the change of the parity of $\sfr=\dim\ggg_\bbk(-1)_\bo$. In the remaining part of the paper,  we always set $q'=q$ if $\sfr$ is even, and $q'=q+1$ if $\sfr$ is odd.

\begin{theorem}\label{reduced Wg} All the elements given in Corollary~\ref{rg} constitute a set of generators of $U_\chi({\ggg}_{\bbk},e)$. Their relations are presented as below.
\begin{itemize}
\item[(1)] For $1\leqslant  i,j\leqslant  l+q'$ , we have
$$[\theta_i,\theta_j]=\theta_i\cdot\theta_j-(-1)^{|\theta_i||\theta_j|}\theta_j\cdot\theta_i=(-1)^{|\theta_i|\cdot|\theta_j|}\theta_j\circ\theta_i-\theta_i\circ\theta_j
\in H^{m_i+m_j+2}.$$
Here and thereinafter  $\theta_i\cdot\theta_j$ is just the algebra multiplication of two elements in $U_\chi(\ggg_\bbk,e)$ while $\theta_i\circ\theta_j$ means the composition of two transformations on $Q_\chi^\chi$.

\item[(2)] The Lie bracket relations in the Lie superalgebra $\ggg_\bbk^e$ with
$$[\bar Y_i,\bar Y_j]=\sum\limits_{k=1}^{l+q}\alpha_{ij}^k\bar Y_k\quad \text{for}~1\leqslant i,j\leqslant l+q$$ imply
\begin{align*}\label{thetacom}
[\theta_i,\theta_j]&\equiv\sum\limits_{k=1}^{l+q}\alpha_{ij}^k\theta_k+q_{ij}
(\theta_1,\cdots,\theta_{l+q'})~(\mbox{mod }H^{m_i+m_j+1}),
\end{align*}
  where $q_{ij}$ with $1\leqslant i,j\leqslant l+q$ is a truncated super-polynomial
  in $l+q'$ variables whose constant term and linear part are both zero. When $\sfr$ is odd, we have additionally
  \begin{equation}\label{explainforeq0}[\theta_{l+q+1},\theta_{l+q+1}]=\text{id}.
\end{equation}
\item[(3)] The monomials $\bar{\theta}_1^{a_1}\cdots\bar{\theta}_l^{a_l}\bar{\theta}_{l+1}^{b_1}\cdots\bar{\theta}_{l+q'}^{b_{q'}}$ and $\theta_1^{a_1}\cdots\theta_l^{a_l}\theta_{l+1}^{b_1}\cdots\theta_{l+q'}^{b_{q'}}$ form bases of $\text{gr}\,(U_\chi({\ggg}_{\bbk},e))$ and $U_\chi({\ggg}_{\bbk},e)$ respectively, where $0\leqslant  a_i\leqslant  p-1$ for $1\leqslant i\leqslant l$, and $b_i\in\{0,1\}$ for $1\leqslant i\leqslant q'$.
    \end{itemize}
\end{theorem}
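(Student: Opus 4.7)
The plan is to adapt Premet's proof of \cite[Theorem 3.4]{P2} to the super setting, leaning on Corollary \ref{rg}, Lemma \ref{commutative relations k2}, and Proposition \ref{reduced basis}. For (1), Lemma \ref{commutative relations k2} implies that $\text{gr}\,U_\chi({\ggg}_\bbk,e)$ is supercommutative, so the top $e$-degree components of $\theta_i\cdot\theta_j$ and $(-1)^{|\theta_i||\theta_j|}\theta_j\cdot\theta_i$ (both living at Kazhdan degree $m_i+m_j+4$) cancel in the super-commutator. The error-term estimate in that lemma guarantees a drop of at least $2$ in $e$-degree, yielding $[\theta_i,\theta_j]\in H^{m_i+m_j+2}$.

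For the first part of (2), I would pass to the graded algebra and identify the image of $[\theta_i,\theta_j]$ in $H^{m_i+m_j+2}/H^{m_i+m_j+1}$. By Corollary \ref{rg}, the leading term of $\theta_k(\bar 1_\chi)$ is $\bar Y_k$ modulo lower $e$-degree; combining this with Lemma \ref{commutative relations k2} shows that the image of $[\theta_i,\theta_j]$ in the relevant graded piece is represented by $[\bar Y_i,\bar Y_j]=\sum_k\alpha_{ij}^k\bar Y_k$, which coincides with the image of $\sum_k\alpha_{ij}^k\theta_k$. Hence $[\theta_i,\theta_j]-\sum_k\alpha_{ij}^k\theta_k$ may differ at Kazhdan degree $m_i+m_j+2$ only by terms whose expansion in the basis of Proposition \ref{reduced basis} involves monomials of length $\geq 2$ in the $\theta_k$'s (all linear contributions having been absorbed into $\sum_k\alpha_{ij}^k\theta_k$, and no scalar contribution arising since ${\bbk}\cdot\mathrm{id}=H^0$ occupies a different graded piece in the non-exceptional index pairs). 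Iteratively rewriting each such basis monomial as a polynomial in the $\theta_k$'s modulo strictly lower filtration yields the polynomial $q_{ij}$. The remaining identity $[\theta_{l+q+1},\theta_{l+q+1}]=\mathrm{id}$ (when $\sfr$ is odd) is a direct computation: $\theta_{l+q+1}(\bar 1_\chi)=\bar v_{(\sfr+1)/2}\otimes\bar 1_\chi$, and evaluating the super-commutator on $\bar 1_\chi$ reduces, via the opposite-algebra sign convention together with $\chi([\bar v_{(\sfr+1)/2},\bar v_{(\sfr+1)/2}])=\langle\bar v_{(\sfr+1)/2},\bar v_{(\sfr+1)/2}\rangle=1$, to exactly $\bar 1_\chi$.

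For (3), iterated application of Lemma \ref{commutative relations k2} together with Corollary \ref{rg} shows that the monomial $\theta_1^{a_1}\cdots\theta_l^{a_l}\theta_{l+1}^{b_1}\cdots\theta_{l+q'}^{b_{q'}}$ applied to $\bar 1_\chi$ has leading term, at maximum $e$-degree and weight, equal up to a nonzero sign to $\bar Y_1^{a_1}\cdots\bar Y_{l+q'}^{b_{q'}}\otimes\bar 1_\chi$. These are exactly the distinguished leading terms of Proposition \ref{reduced basis}; via the bijection $\pi_B$ from that proof they furnish a linearly independent family of the correct cardinality, hence a basis of $U_\chi({\ggg}_\bbk,e)$. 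The graded version follows at once by passing to leading symbols, and the generating statement is then immediate. The principal obstacle I anticipate lies in the sign bookkeeping in (2): tracking the coefficient $K$ of Lemma \ref{commutative relations k2}(2) carefully when both $\theta_i$ and $\theta_j$ are odd, and reconciling the super-opposite algebra convention with the Kazhdan-degree-$2$ drop that underpins the existence of the commutator formula.
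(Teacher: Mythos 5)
Your proposal is correct and follows essentially the same route as the paper's proof: Lemma \ref{commutative relations k2} for the degree-drop in the supercommutator, the leading-term/weight projection (the paper's $\pi_{m_i+m_j+2,m_i+m_j}$) to isolate $[\bar Y_i,\bar Y_j]$ and relegate everything else to $q_{ij}$ or $H^{m_i+m_j+1}$, the bijection $\pi_B$ from Proposition \ref{reduced basis} for the PBW basis, and the direct evaluation $\langle\bar v_{\frac{\sfr+1}{2}},\bar v_{\frac{\sfr+1}{2}}\rangle=1$ for $[\theta_{l+q+1},\theta_{l+q+1}]=\mathrm{id}$. The only organizational difference is that the paper establishes part (3) before part (2), since the identification of the higher-order terms as a truncated super-polynomial $q_{ij}$ in the $\theta_k$'s rests on the $\theta$-monomial basis; you should adopt the same ordering.
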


\begin{proof} The arguments for both cases of $\sfr$ being even and odd are almost the same, with the latter case being more complicated. We will only consider the latter case, i.e.  the case when $\sfr$ is odd. Firstly, it can be easily verified that
\begin{equation}\label{relaitionsss}
[\theta_i,\theta_j]=\theta_i\cdot\theta_j-(-1)^{|\theta_i||\theta_j|}\theta_j\cdot\theta_i=(-1)^{|\theta_i|\cdot|\theta_j|}\theta_j\circ\theta_i-\theta_i\circ\theta_j
\end{equation}by the definition of the $\bbk$-algebra $U_\chi(\ggg_\bbk,e)$.

 Now let us next prove (3).  Firstly recall the elements $\theta_1,\cdots,\theta_l\in U_\chi({\ggg}_{\bbk},e)_{\bar0}$ and $\theta_{l+1},\cdots,\theta_{l+q+1}\in U_\chi({\ggg}_{\bbk},e)_{\bar1}$ in Corollary \ref{rg}.
Combing with \eqref{relaitionsss}, the arguments by induction on $|\mathbf{a}|+|\mathbf{b}|+c$ show that
\[\begin{array}{cl}
&\theta_1^{a_1}\cdots\theta_l^{a_l}\theta_{l+1}^{b_1}\cdots\theta_{l+q}^{b_q}\theta_{l+q+1}^{c}(\bar 1_\chi)\\
=&(\bar Y_1^{a_1}\cdots \bar Y_l^{a_l}\bar Y_{l+1}^{b_1}\cdots \bar Y_{l+q}^{b_q}\bar Y_{l+q+1}^c+\sum\limits_{\mbox{\tiny $\begin{array}{c}|(\mathbf{i},\mathbf{j},\mathbf{f},\mathbf{g})|_e=|(\mathbf{a},\mathbf{b},\mathbf{0},
c\mathbf{e}_{\frac{\sfr+1}{2}})|_e,\\|\mathbf{i}|+|\mathbf{j}|+|\mathbf{f}|+|\mathbf{g}|>|\mathbf{a}|+|\mathbf{b}|+c\end{array}$}}
\lambda^{\mathbf{a},\mathbf{b},\mathbf{0},c\mathbf{e}_{\frac{\sfr+1}{2}}}_{\mathbf{i},\mathbf{j},\mathbf{f},\mathbf{g}}\bar x^{\mathbf{i}}
\bar y^{\mathbf{j}}\bar u^{\mathbf{f}}\bar v^{\mathbf{g}}\\
&+\text{terms of lower}~e\text{-degree})\otimes\bar 1_\chi
\end{array}\]for any $(\mathbf{a},\mathbf{b},c)=(a_1,\cdots,a_l,b_1,\cdots,b_q,c)\in\Lambda_l\times\Lambda'_q\times\Lambda'_1$ (the induction step is based on Corollary~\ref{rg} and Lemma~\ref{commutative relations k2}). Due to Proposition~\ref{reduced basis} we have that
\begin{equation}\label{thetamu}
\theta_1^{a_1}\cdots\theta_{l+q+1}^c=\mu_{\mathbf{a},\mathbf{b},c}\bar h_{\mathbf{a},\mathbf{b},c}+\sum
\limits_{(\mathbf{i},\mathbf{j},k)\in\Lambda_l\times\Lambda'_q\times\Lambda'_1}\mu_{\mathbf{i},\mathbf{j},k}
\bar h_{\mathbf{i},\mathbf{j},k},\qquad\mu_{\mathbf{a},\mathbf{b},c}\neq0,
\end{equation}where $\mu_{\mathbf{i},\mathbf{j},k}=0$ unless $(n(\bar h_{\mathbf{i},\mathbf{j},k}),N(\bar h_{\mathbf{i},\mathbf{j},k}))\prec (n(\bar h_{\mathbf{a},\mathbf{b},c}),N(\bar h_{\mathbf{a},\mathbf{b},c}))$.

Since this establishes for any $(\mathbf{a},\mathbf{b},c)\in\Lambda_l\times\Lambda'_q\times\Lambda'_1$, the monomials $\theta_1^{a_1}\cdots\theta_{l+q+1}^c$ with $(\mathbf{a},\mathbf{b},c)=(a_1,\cdots,a_l,b_1,\cdots,b_q,c)\in\Lambda_l\times\Lambda'_q\times\Lambda'_1$ form a basis of $U_\chi({\ggg}_{\bbk},e)$. It follows from the proof of Proposition~\ref{reduced basis} that for any positive integer $M$, the cosets $\bar h_{\mathbf{i},\mathbf{j},k}+H^{M-1}$ with $(\mathbf{i},\mathbf{j},k)\in\Lambda_l\times\Lambda'_q\times\Lambda'_1$ and $\sum\limits_{1\leqslant f\leqslant  l}i_f(m_f+2)+\sum\limits_{1\leqslant g\leqslant  q}j_g(m_{g+l}+2)+k=M$ (recall that $m_f's,~m_{g+l}'s$ are the weights of $\bar Y_f$'s ($1\leqslant f\leqslant l$) and  $\bar Y_{g+l}$'s ($1\leqslant g\leqslant q$), respectively) form a basis of $\text{gr}(U_\chi({\ggg}_{\bbk},e))$. Due to \eqref{thetamu} and Lemma~\ref{commutative relations k2} the cosets $\theta_1^{i_1}\cdots\theta_l^{i_l}\theta_{l+1}^{j_1}\cdots\theta_{l+q}^{j_q}\theta_{l+q+1}^{k}+H^{M-1}$ with $(\mathbf{i},\mathbf{j},k)\in\Lambda_l\times\Lambda'_q\times\Lambda'_1$ and $\sum\limits_{1\leqslant f\leqslant  l}i_f(m_f+2)+\sum\limits_{1\leqslant g\leqslant  q}j_g(m_{g+l}+2)+k=M$ have the same property.

Now for the completion of the proof (3), we note that
$$\bar{\theta}_1^{i_1}\cdots\bar{\theta}_l^{i_l}
\bar{\theta}_{l+1}^{j_1}\cdots\bar{\theta}_{l+q}^{j_q}\bar{\theta}_{l+q+1}^{k}
= \theta_1^{i_1}\cdots\theta_l^{i_l}\theta_{l+1}^{j_1}\cdots\theta_{l+q}^{j_q}\theta_{l+q+1}^{k}+H^{M-1}
$$
 for any $(\mathbf{i},\mathbf{j},k)\in\Lambda_l\times\Lambda'_q\times\Lambda'_1$  with $\sum\limits_{1\leqslant f\leqslant  l}i_f(m_f+2)+\sum\limits_{1\leqslant g\leqslant  q}j_g(m_{g+l}+2)+k=M$. Hence the monomials listed in the first part of statement (3) constitute a basis of $\text{gr}(U_\chi({\ggg}_{\bbk},e))$. We complete the proof for (3).

  Now we prove the statement (2).   As in the proof of Lemma~\ref{commutative relations k2}, the induction on $|\mathbf{d}|$, then on $|\mathbf{c}|$, yields
\[\begin{array}{ll}&(\rho_\chi(\bar u^{\mathbf{c}}\bar v^\mathbf{d}))(\bar x^{\mathbf{a}'}\bar y^{\mathbf{b}'}\bar u^{\mathbf{c}'}\bar v^{\mathbf{d}'}\otimes\bar 1_\chi)\\
=&(K'\bar x^{\mathbf{a}'}\bar y^{\mathbf{b}'}\bar u^{\mathbf{c}+\mathbf{c}'}\bar v^{\mathbf{d}+\mathbf{d}'}+
\sum\limits_{i,j}\kappa_{i,j}\bar x^{\mathbf{a}'-\mathbf{e}_i}\bar y^{\mathbf{b}'}\rho_\chi([\bar u_j,\bar x_i])\bar u^{\mathbf{c}+\mathbf{c}'-\mathbf{e}_j}\bar v^{\mathbf{d}+\mathbf{d}'}\\
&+\sum\limits_{i,j}\lambda_{i,j}\bar x^{\mathbf{a}'}\bar y^{\mathbf{b}'-\mathbf{e}_i}\rho_\chi([\bar u_j,\bar y_i])\bar u^{\mathbf{c}+\mathbf{c}'-\mathbf{e}_j}\bar v^{\mathbf{d}+\mathbf{d}'}+
\sum\limits_{i,j}\omega_{i,j}\bar x^{\mathbf{a}'-\mathbf{e}_i}\bar y^{\mathbf{b}'}\rho_\chi([\bar v_j,\bar x_i])\bar u^{\mathbf{c}+\mathbf{c}'}\cdot\\
&\bar v^{\mathbf{d}+\mathbf{d}'-\mathbf{e}_j}+\sum\limits_{i,j}\nu_{i,j}\bar x^{\mathbf{a}'}\bar y^{\mathbf{b}'-\mathbf{e}_i}\rho_\chi([\bar v_j,\bar y_i])\bar u^{\mathbf{c}+\mathbf{c}'}\bar v^{\mathbf{d}+\mathbf{d}'-\mathbf{e}_j}+\text{terms of}~e\text{-degree}\leqslant |(\mathbf{a}',\mathbf{b}',\\&\mathbf{c}+\mathbf{c}',\mathbf{d}+\mathbf{d}')|_e-3)\otimes\bar 1_\chi;
\end{array}\]
it follows that
\[\begin{array}{ll}&(\rho_\chi(\bar x^{\mathbf{a}}\bar y^\mathbf{b}\bar u^\mathbf{c}\bar v^\mathbf{d}))(\bar x^{\mathbf{a}'}\bar y^{\mathbf{b}'}\bar u^{\mathbf{c}'}\bar v^{\mathbf{d}'}\otimes\bar 1_\chi)\\
=&(K\bar x^{\mathbf{a}+\mathbf{a}'}\bar y^{\mathbf{b}+\mathbf{b}'}\bar u^{\mathbf{c}+\mathbf{c}'}\bar v^{\mathbf{d}+\mathbf{d}'}
+\sum\limits_{i<j}\alpha_{i,j}\bar x^{\mathbf{a}+\mathbf{a}'-\mathbf{e}_i-\mathbf{e}_j}\bar y^{\mathbf{b}+\mathbf{b}'}\rho_\chi([\bar x_i,\bar x_j])\bar u^{\mathbf{c}+\mathbf{c}'}\bar v^{\mathbf{d}+\mathbf{d}'}
+\sum\limits_{i,j}\beta_{i,j}\cdot\\&\bar x^{\mathbf{a}+\mathbf{a}'-\mathbf{e}_i}\bar y^{\mathbf{b}+\mathbf{b}'-\mathbf{e}_j}\rho_\chi([\bar x_i,\bar y_j])\bar u^{\mathbf{c}+\mathbf{c}'}\bar v^{\mathbf{d}+\mathbf{d}'}
+\sum\limits_{i<j}\gamma_{i,j}\bar x^{\mathbf{a}+\mathbf{a}'}\bar y^{\mathbf{b}+\mathbf{b}'-\mathbf{e}_i-\mathbf{e}_j}\rho_\chi([\bar y_i,\bar y_j]) \bar u^{\mathbf{c}+\mathbf{c}'}\cdot\\& \bar v^{\mathbf{d}+\mathbf{d}'}+\sum\limits_{i,j}\kappa'_{i,j}\bar x^{\mathbf{a}+\mathbf{a}'-\mathbf{e}_i}\bar y^{\mathbf{b}+\mathbf{b}'}\rho_\chi([\bar u_j,\bar x_i])\bar u^{\mathbf{c}+\mathbf{c}'-\mathbf{e}_j}\bar v^{\mathbf{d}+\mathbf{d}'}
+\sum\limits_{i,j}\lambda'_{i,j}\bar x^{\mathbf{a}+\mathbf{a}'}\bar y^{\mathbf{b}+\mathbf{b}'-\mathbf{e}_i}\cdot\\&\rho_\chi([\bar u_j,\bar y_i])\bar u^{\mathbf{c}+\mathbf{c}'-\mathbf{e}_j}\bar v^{\mathbf{d}+\mathbf{d}'}+\sum\limits_{i,j}\omega'_{i,j}\bar x^{\mathbf{a}+\mathbf{a}'-\mathbf{e}_i}\bar y^{\mathbf{b}+\mathbf{b}'}\rho_\chi([\bar v_j,\bar x_i])\bar u^{\mathbf{c}+\mathbf{c}'}\bar v^{\mathbf{d}+\mathbf{d}'-\mathbf{e}_j}
+\sum\limits_{i,j}\nu'_{i,j}\cdot\\&\bar x^{\mathbf{a}+\mathbf{a}'}\bar y^{\mathbf{b}+\mathbf{b}'-\mathbf{e}_i}\rho_\chi([\bar v_j,\bar y_i])\bar u^{\mathbf{c}+\mathbf{c}'}\bar v^{\mathbf{d}+\mathbf{d}'-\mathbf{e}_j}+\text{terms of}~e\text{-degree}\leqslant|(\mathbf{a}+\mathbf{a}',\mathbf{b}+\mathbf{b}',\mathbf{c}\\&+\mathbf{c}',\mathbf{d}+\mathbf{d}')|_e-3)\otimes\bar 1_\chi;
\end{array}\]where all the coefficients above are in $\bbk$, and $K',K\in{\bbk}$ are in the same sense as in Lemma~\ref{commutative relations k2}.
Together with Corollary~\ref{rg}, for $1\leqslant i,j\leqslant l+q$ we have
\begin{equation}\label{tt}
\begin{array}{cl}
&[\theta_i,\theta_j](\bar 1_\chi)\\
=&(\theta_i\cdot\theta_j)(\bar 1_\chi)-(-1)^{|\theta_i||\theta_j|}(\theta_j\cdot\theta_i)(\bar 1_\chi)\\
=&([\bar Y_i,\bar Y_j]+\sum\limits_{\mbox{\tiny $\begin{array}{c}|\mathbf{i},\mathbf{j},\mathbf{f},\mathbf{g}|_e=m_i+m_j+2,\\|\mathbf{i}|+|\mathbf{j}|+|\mathbf{f}|+|\mathbf{g}|\geqslant 2\end{array}$}}\mu_{\mathbf{i},\mathbf{j},\mathbf{f},\mathbf{g}}\bar x^{\mathbf{i}}
\bar y^{\mathbf{j}}\bar u^{\mathbf{f}}\bar v^{\mathbf{g}}+\sum\limits_{|(\mathbf{i},\mathbf{j},\mathbf{f},\mathbf{g})|_e<m_i+m_j+2}\mu_{\mathbf{i},\mathbf{j},\mathbf{f},\mathbf{g}}\bar x^{\mathbf{i}}
\bar y^{\mathbf{j}}\bar u^{\mathbf{f}}\bar v^{\mathbf{g}})\\&\otimes\bar 1_\chi,
\end{array}\end{equation}
where $\mu_{\mathbf{i},\mathbf{j},\mathbf{f},\mathbf{g}}\in{\bbk}$.
Since $[\bar Y_i,\bar Y_j]=\sum\limits_{k=1}^{l+q}\alpha_{ij}^k\bar Y_k$ for $1\leqslant i,j\leqslant l+q$ by the assumption in the theorem, taking $\pi_{m_i+m_j+2,m_i+m_j}$ on both sides of \eqref{tt}, we have $$\pi_{m_i+m_j+2,m_i+m_j}(([\theta_i,\theta_j]-\sum\limits_{k=1}^{l+q}\alpha_{ij}^k\theta_k)
(\bar 1_\chi))=0.$$
On the other hand, the arguments in the proof for (3)  show that there exists a unique truncated super-polynomial $\tilde{q}_{ij}$ in $\bar Y_1,\cdots,\bar Y_{l+q+1}$ such that $$[\theta_i,\theta_j]-\sum\limits_{k=1}^{l+q}\alpha_{ij}^k\theta_k
=\tilde{q}_{ij}(\theta_1,\cdots,\theta_{l+q+1}).$$
Taking the last three equalities into account, we have that the linear part of $\tilde{q}_{ij}$ involves only those $\bar Y_1,\cdots,\bar Y_{l+q+1}$ whose weights $<m_i+m_j$. Hence there exists a truncated super-polynomial $q_{ij}$ in $l+q+1$ variables with initial form of degree at least $2$ such that
\begin{equation*}
[\theta_i,\theta_j]-\sum\limits_{k=1}^{l+q}\alpha_{ij}^k\theta_k-q_{ij}(\theta_1,\cdots,\theta_{l+q+1})\in H^{m_i+m_j+1}
\end{equation*}for $1\leqslant i,j\leqslant l+q$.

It is immediate from the assumption in \S\ref{defw} that $$[\bar v_{\frac{\sfr+1}{2}},\bar v_{\frac{\sfr+1}{2}}]\otimes\bar 1_\chi= 1\otimes\langle\bar  v_{\frac{\sfr+1}{2}},\bar v_{\frac{\sfr+1}{2}}\rangle\bar 1_\chi=1\otimes\bar 1_\chi.$$
We have further
 $$[\theta_{l+q+1},\theta_{l+q+1}](\bar 1_\chi)=2\bar v_{\frac{\sfr+1}{2}}^2\otimes\bar 1_\chi=[\bar v_{\frac{\sfr+1}{2}},\bar v_{\frac{\sfr+1}{2}}]\otimes\bar 1_\chi=1\otimes\bar 1_\chi,$$
i.e. $[\theta_{l+q+1},\theta_{l+q+1}]=\text{id}$. The statement (2) is proved.

Finally, by the equations in \eqref{relaitionsss} and \eqref{tt}, the statement (1) in the theorem follows.

Summing up, we complete the proof.
\end{proof}
\begin{rem}\label{redun}
(1) It is notable that Theorem~\ref{reduced Wg} shows that the parity of $\sfr$ plays the key role for the construction of reduced $W$-superalgebras, which makes the structure and representation theory of reduced $W$-superalgebras distinguished from that of reduced $W$-algebras (see \cite[\S3]{P2} for ordinary reduced $W$-algebras).

(2) Recall that all the indices $i,j$ in Theorem~\ref{reduced Wg}(2) are assumed to be in the set $\{1,\cdots,l+q\}$. In this case, there exist truncated super-polynomials $\bar F_{ij}$ of $l+q'$ indeterminants over $\bbk$ ($i,j=1,\cdots,l+q$) with the first $l$ indeterminants  being even, and the others being odd, such that
\begin{align}\label{explainforeq}[\theta_i,\theta_j]=\bar F_{i,j}(\theta_1,\cdots,\theta_{l+q'}),\quad i,j=1,\cdots,l+q,
\end{align}
while $\bar F_{i,j}(\theta_1,\cdots,\theta_{l+q'})\equiv\sum_{k=1}^{l+q}\alpha_{ij}^k\theta_k+q_{ij}(\theta_1,\cdots,\theta_{l+q'})(\mbox{mod }H^{m_i+m_j+1})$ for $i,j=1,\cdots,l+q$ (Note that hereinafter the super polynomials in \eqref{explainforeq} are ``formal" ones, which can be endowed with real meaning when considered as the image under ``gradation" associated with Kazhdan filtration. This point is similar to the one happening in the ordinary Lie algebra case (cf. \cite[\S2.2]{P7})).

(3) For the case when $\sfr$ is odd, if one of the indices $i,j$ happens to be $l+q+1$, then it is hard to derive an explicit formulas for $[\bar Y_i,\bar Y_{l+q+1}]$ and $[\bar Y_{l+q+1},\bar Y_j]$ since $\mathfrak{g}_{\bbk}^e\oplus\bbk\bar v_{\frac{\sfr+1}{2}}$ is not necessary a Lie superalgebra. However, by Lemma~\ref{commutative relations k2} and Theorem~\ref{reduced Wg}(3) one can still find truncated super-polynomials $\bar F_{i,l+q+1}$'s and $\bar F_{l+q+1,j}$'s in $l+q+1$ indeterminants over $\mathds{k}$ ($1\leqslant i,j\leqslant l+q$) such that
\begin{align}\label{explainforeq2}[\theta_i,\theta_{l+q+1}]=\bar F_{i,l+q+1}(\theta_1,\cdots,\theta_{l+q+1}),\quad [\theta_{l+q+1},\theta_{j}]=\bar F_{l+q+1,j}(\theta_1,\cdots,\theta_{l+q+1}),
\end{align}
where the $e$-degree for all the monomials in the $\bbk$-span of $\bar F_{i,l+q+1}$'s and $\bar F_{l+q+1,j}$'s is less than $m_i+1$ and $m_j+1$, respectively.

(4) It is notable that $[\theta_i,\theta_j]=-(-1)^{|\theta_i||\theta_j|}[\theta_j,\theta_i]$ for any $1\leqslant  i,j\leqslant  l+q'$. In particular, one can deduce that $[\theta_i,\theta_i]=0$ for $1\leqslant  i\leqslant  l$ as $\theta_i$ is always an even element in $U_\chi({\ggg}_{\bbk},e)$. Therefore, after deleting all the redundant commutating relations in \eqref{explainforeq0}, \eqref{explainforeq} and \eqref{explainforeq2}, the remaining ones are with indices $i,j$ satisfying $1\leqslant  i<j\leqslant  l+q'$ and $l+1\leqslant  i=j\leqslant  l+q'$.
\end{rem}

\begin{rem} \label{add to reduced W} More generally, one can consider a reduced $W$-superalgebra $U_\eta(\ggg_\bbk,e)$ associated with a $p$-character $\eta\in\chi+(\mmm_\bbk^\perp)_\bz$ (see Definition \ref{reduced W}).  Proposition \ref{invariant} and Theorem \ref{matrix}(1) enable us to extend the arguments in this section for the case $U_\chi(\ggg_\bbk,e)$ to the general case $U_\eta(\ggg_\bbk,e)$. Especially, we can rewrite Theorem \ref{reduced Wg}(3) as follows.

 {\emph{There exist even elements $\theta_1,\cdots,\theta_l\in U_\eta({\ggg}_{\bbk},e)_{\bar0}$ and odd elements $ \theta_{l+1},\cdots,\theta_{l+q'}\in U_\eta({\ggg}_{\bbk},e)_{\bar1}$ in the same sense as in Corollary \ref{rg}, such that
 the monomials $$\theta_1^{a_1}\cdots\theta_{l}^{a_l}\theta_{l+1}^{b_1}\cdots\theta_{l+q'}^{b_{q'}}$$ with $0\leqslant  a_k\leqslant  p-1$ for $1\leqslant k\leqslant l$ and $b_k\in\{0,1\}$ for $1\leqslant k\leqslant q'$ form a ${\bbk}$-basis of $U_\eta({\ggg}_{\bbk},e)$.}}
\end{rem}

\section{The structure of finite $W$-superalgebras over $\bbc$}\label{swc}
Maintain the notations as the previous sections. Especially, let $\ggg$ be a basic Lie superalgebra over $\bbc$ with a given nilpotent element $e\in\ggg_\bz$, $A$ be an admissible ring associated with the pair $(\ggg,e)$.

In this concluding section, we will establish the structure theory of finite $W$-superalgebra $U(\ggg,e)$ over $\bbc$, parallel to that over $\bbk$. The approach  is the procedure of  ``admissible'' via the admissible ring $A$ associated to the basic Lie superalgebra $\ggg$, which was exploited by Premet in the study of finite $W$-algebras (cf. \cite{P2}).  The PBW theorem of finite $W$-superalgebra $U({\ggg},e)$ over $\mathbb{C}$ will be present in \S\ref{cwc}.

 We need recall some primary  conventions parallel to \S\ref{modpconventions}. For example, for $k\in\mathbb{Z}_+$, define
 \begin{equation*}
 \begin{array}{llllll}
 \mathbb{Z}_+^k&:=&\{(i_1,\cdots,i_k)\mid i_j\in\mathbb{Z}_+\},&
 \Lambda'_k&:=&\{(i_1,\cdots,i_k)\mid i_j\in\{0,1\}\}
 \end{array}
 \end{equation*}with $1\leqslant j\leqslant k$.
 As in  \S\ref{defw} and \S\ref{modulo p}, there is a set of ``monomials basis" in $Q_\chi$ over $\mathbb{C}$ as follows
$$x^\mathbf{a}y^\mathbf{b}u^\mathbf{c}v^\mathbf{d}\otimes 1_\chi:=x_1^{a_1}\cdots x_m^{a_m}y_1^{b_1}\cdots y_n^{b_n}u_1^{c_1}\cdots u_s^{c_s}v_1^{d_1}\cdots v_t^{d_t}\otimes 1_\chi,$$
where $(\mathbf{a},\mathbf{b},\mathbf{c},\mathbf{d})$ runs over $\mathbb{Z}_+^m\times\Lambda'_n\times\mathbb{Z}_+^s\times\Lambda'_t$. (recall that $t=\lfloor\frac{\text{dim}\,{\ggg}(-1)_{\bar1}}{2}\rfloor$)

Assume that $\text{wt}(x_i)=k_i$, $\text{wt}(y_j)=k'_j$, i.e. $x_i\in{\ggg}(k_i)_{\bar0}$ and $y_j\in{\ggg}(k'_j)_{\bar1}$ where $1\leqslant  i\leqslant  m$ and $1\leqslant  j\leqslant  n$. Given $(\mathbf{a},\mathbf{b},\mathbf{c},\mathbf{d})\in\mathbb{Z}_+^m\times\Lambda'_n\times\mathbb{Z}_+^s\times\Lambda'_t$, set
$$|(\mathbf{a},\mathbf{b},\mathbf{c},\mathbf{d})|_e:=\sum_{i=1}^ma_i(k_i+2)+\sum_{i=1}^nb_i(k'_i+2)+\sum_{i=1}^sc_i+\sum_{i=1}^td_i,$$
and say $x^{\mathbf{a}}y^\mathbf{b}u^\mathbf{c}v^\mathbf{d}$ to have $e$-degree $|(\mathbf{a},\mathbf{b},\mathbf{c},\mathbf{d})|_e$, which is written as
$\text{deg}_e(x^{\mathbf{a}}y^\mathbf{b}u^\mathbf{c}v^\mathbf{d})=|(\mathbf{a},\mathbf{b},\mathbf{c},\mathbf{d})|_e$.

\subsection{Some Lemmas}
Parallel to Lemma~\ref{commutative relations k1}, one can easily obtain that

\begin{lemma}\label{commutative}
For any homogeneous element $w\in U({\ggg})_{i}$ ($i\in\mathbb{Z}_2$), we have
$$w\cdot x^{\mathbf{a}}y^\mathbf{b}u^\mathbf{c}v^\mathbf{d}=\sum_{\mathbf{i}\in\mathbb{Z}_+^m}\sum_{\mathbf{j}\in\Lambda_{n}'}\left(\begin{array}{@{\hspace{0pt}}c@{\hspace{0pt}}} \mathbf{a}\\ \mathbf{i}\end{array}\right)x^{\mathbf{a}-\mathbf{i}}y^{\mathbf{b}-\mathbf{j}}\cdot[wx^{\mathbf{i}}y^{\mathbf{j}}]\cdot u^\mathbf{c}v^\mathbf{d},$$
where $\mathbf{a}\choose\mathbf{i}$$=\prod\limits_{l'=1}^m$$a_{l'}\choose i_{l'}$, and $$[wx^{\mathbf{i}}y^{\mathbf{j}}]=k_{1,b_1,j_1}\cdots k_{n,b_n,j_n}(-1)^{|\mathbf{i}|}(\text{ad}y_n)^{j_n}\cdots(\text{ad}y_1)^{j_1}(\text{ad}x_m)^{i_m}\cdots(\text{ad}x_1)^{i_1}(w),$$
in which the coefficients $k_{1,b_1,j_1},\cdots, k_{n,b_n,j_n}\in\mathbb{C}$ (note that $b_1,\cdots,b_n,j_1,\cdots,j_n\in\{0,1\}$) are defined by
$$k_{t',0,0}=1, k_{t',0,1}=0, k_{t',1,0}=(-1)^{|w|+j_1+\cdots+j_{{t'}-1}}, k_{t',1,1}=(-1)^{|w|+1+j_1+\cdots+j_{{t'}-1}},$$
where $1\leqslant {t'}\leqslant n$ ($j_0$ is interpreted as $0$).
\end{lemma}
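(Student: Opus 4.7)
The plan is to replicate the argument of Lemma \ref{commutative relations k1} essentially verbatim. The proof of that earlier lemma took place formally in $U_\chi(\ggg_\bbk)$, but the calculations invoked only the super-commutator relations of the enveloping superalgebra and never used the $p$-th power map or any relation of the form $\bar x^p - \bar x^{[p]} - \chi(\bar x)^p$. So the same identities hold in the universal enveloping superalgebra $U(\ggg)$ over $\mathbb{C}$. The job is therefore to reorganise that earlier computation in the characteristic-zero setting and check that every sign matches the coefficients $k_{t',b_{t'},j_{t'}}$ prescribed in the statement.

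First I would handle the pure odd part. For each odd generator $y_j$, introduce the right-multiplication operator $R_{y_j}$ on $U(\ggg)$ and write $w\cdot y_1^{b_1}\cdots y_n^{b_n}=R_{y_n}^{b_n}\cdots R_{y_1}^{b_1}(w)$. Applying $R_{y_j}^{b_j}$ to a term of the form $(\mathrm{ad}\,y_{j-1})^{k_{j-1}}\cdots(\mathrm{ad}\,y_1)^{k_1}(w)$ produces, by the super-Leibniz rule, two contributions (one keeping $y_j$ on the left, one absorbed into $\mathrm{ad}\,y_j$), whose signs are controlled by $|w|+k_1+\cdots+k_{j-1}$. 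Iterating through $j=1,\dots,n$ gives precisely the constants $k_{t',b_{t'},j_{t'}}$ defined in the statement (with the convention $j_0=0$).

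Next I would deal with the pure even part. Since $x_1,\dots,x_m\in\ggg_{\bar 0}$, the standard formula
\[
w\cdot x_1^{a_1}\cdots x_m^{a_m}=\sum_{\mathbf{i}\in\mathbb{Z}_+^m}(-1)^{|\mathbf{i}|}\binom{\mathbf{a}}{\mathbf{i}}x^{\mathbf{a}-\mathbf{i}}\cdot(\mathrm{ad}\,x_m)^{i_m}\cdots(\mathrm{ad}\,x_1)^{i_1}(w)
\]
follows by induction on $|\mathbf{a}|$ (this is exactly \cite[\S3.1(2)]{P2}, whose proof is sign-free). Because $u^{\mathbf{c}}v^{\mathbf{d}}$ sits at the far right of the monomial, the only thing to do is move $w$ past $x^{\mathbf{a}}y^{\mathbf{b}}$ and then multiply on the right by $u^{\mathbf{c}}v^{\mathbf{d}}$. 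Applying the even formula first, then applying the odd formula to each bracket $[wx^{\mathbf{i}}]\cdot y^{\mathbf{b}}$ (note that $[wx^{\mathbf{i}}]$ has the same parity as $w$, since every $\mathrm{ad}\,x_r$ is parity-preserving, so the sign bookkeeping in the odd step is unchanged), and finally right-multiplying by $u^{\mathbf{c}}v^{\mathbf{d}}$, yields the claimed expansion with brackets $[wx^{\mathbf{i}}y^{\mathbf{j}}]$.

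The only mildly delicate point is tracking the iterated super-signs when $w$ crosses the sequence $y_1,\dots,y_n$: one must confirm that the cumulated odd parity picked up before reaching $y_{t'}$ is correctly recorded as $|w|+j_1+\cdots+j_{t'-1}$ in the definition of $k_{t',b_{t'},j_{t'}}$. This was already carried out carefully in the proof of Lemma \ref{commutative relations k1}, and since that analysis is purely combinatorial and characteristic-free, it transfers to $U(\ggg)$ over $\mathbb{C}$ without modification. For this reason the proof can essentially be omitted, with a reference back to Lemma \ref{commutative relations k1}.
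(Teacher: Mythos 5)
Your proposal is correct and matches the paper's approach: the paper states this lemma with only the remark that it is ``parallel to Lemma~\ref{commutative relations k1}'', and your argument is exactly the intended one, namely that the proof of the modular lemma uses only the super-commutator structure of the enveloping algebra (never the $p$-th power map or the reduction modulo $J_\chi$), so it transfers verbatim to $U(\ggg)$ over $\mathbb{C}$, with the sum over $\mathbf{i}\in\mathbb{Z}_+^m$ remaining finite because $\binom{\mathbf{a}}{\mathbf{i}}$ vanishes once $i_{l'}>a_{l'}$.
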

\vskip0.2cm
Let $\tilde{\rho}_\chi$ denote the representation of $U({\ggg})$ in $\text{End}Q_\chi$, then we have

\begin{lemma}\label{com c}
Let $(\mathbf{a},\mathbf{b},\mathbf{c},\mathbf{d}),~(\mathbf{a}',\mathbf{b}',\mathbf{c}',\mathbf{d}')\in\mathbb{Z}_+^m\times\Lambda'_n\times\mathbb{Z}_+^s\times\Lambda'_t$ be such that $|(\mathbf{a},\mathbf{b},\mathbf{c},\mathbf{d})|_e=A,~|(\mathbf{a}',\mathbf{b}',\mathbf{c}',\mathbf{d}')|_e=B$. Then
\[\begin{array}{ccl}(\tilde{\rho}_\chi(x^{\mathbf{a}}y^\mathbf{b}u^\mathbf{c}v^\mathbf{d}))(x^{\mathbf{a}'}y^{\mathbf{b}'}u^{\mathbf{c}'}v^{\mathbf{d}'}\otimes1_\chi)&=&
(Cx^{\mathbf{a}+\mathbf{a}'}y^{\mathbf{b}+\mathbf{b}'}u^{\mathbf{c}+\mathbf{c}'}v^{\mathbf{d}+\mathbf{d}'}+\text{terms of}~e\text{-degree}\\ &&\leqslant A+B-2)\otimes1_\chi,
\end{array}\]
where the coefficient $C\in \{-1,0,1\}\subseteq\bbc$ is subject to the following items:
\begin{itemize}
\item[(1)] $C=0$ if and only if $(\mathbf{b}+\mathbf{b}',\mathbf{d}+\mathbf{d}')\notin\Lambda'_n\times\Lambda'_t$;
\item[(2)] When $(\mathbf{b}+\mathbf{b}',\mathbf{d}+\mathbf{d}')\in\Lambda'_n\times\Lambda'_t$, then $C=(-1)^{\tau(\mathbf{b},\mathbf{d},\mathbf{b}',\mathbf{d}')}$. Here $\tau(\mathbf{b},\mathbf{d},\mathbf{b}',\mathbf{d}')$ has the same meaning as in Lemma \ref{commutative relations k2}.
\end{itemize}
\end{lemma}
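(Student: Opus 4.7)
The plan is to transplant the proof of Lemma \ref{commutative relations k2} to characteristic zero, substituting Lemma \ref{commutative} for Lemma \ref{commutative relations k1}. A helpful simplification is that over $\mathbb{C}$ the even indices $\mathbf{a},\mathbf{c}$ range over $\mathbb{Z}_+^m,\mathbb{Z}_+^s$, not over $\Lambda_m,\Lambda_s$, so there is no ``$p$-restriction'' obstruction when multiplying monomials: the leading coefficient $C$ can only vanish when two copies of the same odd basis vector collide, i.e.\ when $\mathbf{b}+\mathbf{b}'\notin\Lambda'_n$ or $\mathbf{d}+\mathbf{d}'\notin\Lambda'_t$.

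The argument is organized as a chain of reductions in the same order as in the modular proof. First, treat $\tilde{\rho}_\chi(v_S)$ acting on $x^{\mathbf{a}'}y^{\mathbf{b}'}u^{\mathbf{c}'}v^{\mathbf{d}'}\otimes 1_\chi$ using Lemma \ref{commutative}. Separating the $(\mathbf{i},\mathbf{j})=\mathbf{0}$ summand, one obtains the canonical leading monomial $(-1)^{|\mathbf{b}'|+\sum_{l<S}d'_l}\,x^{\mathbf{a}'}y^{\mathbf{b}'}u^{\mathbf{c}'}v^{\mathbf{d}'+\mathbf{e}_S}\otimes 1_\chi$, interpreted as $0$ when $d'_S=1$; the sign records the parity of commuting $v_S$ rightward past $y^{\mathbf{b}'}$ and $v_1^{d'_1}\cdots v_{S-1}^{d'_{S-1}}$. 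For $(\mathbf{i},\mathbf{j})\neq\mathbf{0}$ one separately checks that the $e$-degree drops by at least $2$ in the two sub-cases $\mathrm{wt}([v_Sx^{\mathbf{i}}y^{\mathbf{j}}])\geq 0$ (using $\mathrm{ad}\,h$-stability of $\mathfrak{p}$ to control both the weight and the total degree of $x^{\mathbf{a}'-\mathbf{i}}y^{\mathbf{b}'-\mathbf{j}}\cdot[v_Sx^{\mathbf{i}}y^{\mathbf{j}}]$) and $\mathrm{wt}([v_Sx^{\mathbf{i}}y^{\mathbf{j}}])\leq -1$ (using $\tilde{\rho}_\chi(\mathfrak{m})\cdot 1_\chi\subseteq \mathbb{C}1_\chi$ to absorb the lower-weight part into the monomial $u^{\mathbf{f}}v^{\mathbf{g}}$). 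Induction on $|\mathbf{d}|$ promotes this to $\tilde{\rho}_\chi(v^{\mathbf{d}})$; the same argument with even $u_i$'s (which contribute no sign) extends to $\tilde{\rho}_\chi(u^{\mathbf{c}}v^{\mathbf{d}})$.

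Next, since $\mathrm{ad}\,h$ preserves $\mathfrak{p}$, the PBW theorem for $U(\mathfrak{p})$ gives
\[
x^{\mathbf{a}}y^{\mathbf{b}}\cdot x^{\mathbf{a}'}y^{\mathbf{b}'}
=C'''x^{\mathbf{a}+\mathbf{a}'}y^{\mathbf{b}+\mathbf{b}'}
+\sum\beta_{\mathbf{i},\mathbf{j}}\,x^{\mathbf{i}}y^{\mathbf{j}},
\]
where the sum runs over $|\mathbf{i}|+|\mathbf{j}|<|\mathbf{a}|+|\mathbf{a}'|+|\mathbf{b}|+|\mathbf{b}'|$ of the same total weight, and $C'''\in\{-1,0,1\}$ is $0$ iff $\mathbf{b}+\mathbf{b}'\notin\Lambda'_n$ and otherwise records the sign of reordering odd factors. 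Combining with the previous reduction yields the desired leading term $C\,x^{\mathbf{a}+\mathbf{a}'}y^{\mathbf{b}+\mathbf{b}'}u^{\mathbf{c}+\mathbf{c}'}v^{\mathbf{d}+\mathbf{d}'}$ plus terms of $e$-degree $\leq A+B-2$, with $C$ a product of powers of $-1$ coming entirely from transpositions of odd basis vectors.

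The only nontrivial point that remains is the explicit combinatorial identification $C=(-1)^{\tau(\mathbf{b},\mathbf{d},\mathbf{b}',\mathbf{d}')}$. This is the identical bookkeeping as in step (6) of the proof of Lemma \ref{commutative relations k2}: each swap of two odd elements contributes a sign, and the number of swaps needed to bring the concatenated odd sequence into the interleaved canonical form is precisely $\tau(\mathbf{b},\mathbf{d},\mathbf{b}',\mathbf{d}')$. I expect this sign tracking to be the only delicate part of the write-up, but since no prime $p$ enters the sign rule, the modular argument transfers verbatim.
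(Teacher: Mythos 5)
Your proposal is correct and follows exactly the route the paper takes: the paper's own proof of Lemma~\ref{com c} simply says to repeat the proof of Lemma~\ref{commutative relations k2} with Lemma~\ref{commutative} in place of Lemma~\ref{commutative relations k1}, which is precisely your plan. Your added observation that the vanishing condition for $C$ now involves only the odd multi-indices $\mathbf{b}+\mathbf{b}'$ and $\mathbf{d}+\mathbf{d}'$ (since the even exponents are unrestricted over $\mathbb{C}$, so no $p$-truncation occurs) is the one genuine point of difference from the modular statement, and you have identified it correctly.
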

\vskip0.2cm
For the proof of the above lemma, we can repeat that of Lemma~\ref{commutative relations k2} applying Lemma~\ref{commutative} in place of Lemma~\ref{commutative relations k1}.
We omit the details.

Now we can get the $\bbc$-analogue of the leading-terms lemma. Recall any $0\neq h\in U({\ggg},e)$ is uniquely determined by its value $h(1_\chi)\in Q_\chi$. For $h\neq0$ we let $n(h),~N(h)$ and $\Lambda_h^{\text{max}}$ be the same meaning as in \S\ref{maximalandweight}.

\begin{lemma}\label{hwc}
Let $h\in U({\ggg},e)\backslash\{0\}$ and $(\mathbf{a},\mathbf{b},\mathbf{c},\mathbf{d})\in \Lambda^{\text{max}}_h$. Then $\mathbf{c}=\mathbf{0}$, and $\mathbf{a}\in\mathbb{Z}_+^{l}\times\{\mathbf{0}\},~\mathbf{b}\in\Lambda'_{q}\times\{\mathbf{0}\}$. Moreover, the sequence $\mathbf{d}$ satisfies

(1) $\mathbf{d}=\mathbf{0}$ when dim\,${\ggg}(-1)_{\bar{1}}$ is even;

(2) $\mathbf{d}=\{\mathbf{0}\}_{\frac{\sfr-1}{2}}\times\Lambda'_1$ when dim\,${\ggg}(-1)_{\bar{1}}$ is odd.
\end{lemma}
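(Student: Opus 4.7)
The plan is to adapt the proof of Lemma \ref{hw} directly to the characteristic-zero setting, using Lemma \ref{commutative} and Lemma \ref{com c} in place of their modular counterparts (Lemma \ref{commutative relations k1} and Lemma \ref{commutative relations k2}). The argument is actually slightly cleaner over $\mathbb{C}$ than over $\bbk$, since one does not have to worry about positive integers becoming zero after reduction modulo $p$, so no $\bar{a}_i$ subtlety arises.

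More precisely, I would argue by contradiction. Suppose some $(\mathbf{a},\mathbf{b},\mathbf{c},\mathbf{d})\in\Lambda^{\text{max}}_h$ violates the conclusion. Then at least one of the following four situations occurs: (S1) some $a_k\neq 0$ for $k>l$; (S2) all $a_k=0$ for $k>l$ but some $b_k\neq 0$ for $k>q$; (S3) all of the above vanish but some $c_k\neq 0$; (S4) all of the above vanish and some $d_k\neq 0$ with $1\leqslant k\leqslant \frac{\sfr}{2}$ if $\sfr$ is even, or with $1\leqslant k\leqslant \frac{\sfr-1}{2}$ if $\sfr$ is odd. In each case I would exploit the non-degeneracy of the bilinear form $(\cdot,\cdot)$ on $\ggg$ (or, in cases (S3)--(S4), of the induced form $\langle\cdot,\cdot\rangle$ on $\ggg(-1)$) to pick a $\mathbb{Z}_2$-homogeneous element $w\in\mmm$ of weight $\nu=\text{wt}(w)$ dual to the offending basis vector; explicitly $w\in\ggg(-n_k-2)_{\bz}$ with $\chi([w,x_i])=\delta_{ki}$ in (S1), $w\in\ggg(-n'_k-2)_\bo$ with $\chi([w,y_i])=\delta_{ki}$ in (S2), and analogous $w\in\ggg(-1)'$ with $\langle w,u_i\rangle=\delta_{ki}$ or $\langle w,v_i\rangle=\delta_{ki}$ in (S3)--(S4).

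Next I would compute $(\tilde{\rho}_\chi(w)-\chi(w)\text{id})h(1_\chi)$ using Lemma \ref{commutative}, and classify the resulting monomials according to whether $\text{wt}([wx^{\mathbf{i}}y^{\mathbf{j}}])\geqslant 0$, equals $-1$, or equals $-2$, as in Step~2 of the proof of Lemma~\ref{hw}. A routine $e$-degree bookkeeping (identical to that lemma, using $\text{deg}_e=\text{wt}+2\,\text{deg}$ from \eqref{dege}) shows that in cases (S1)--(S2) the only surviving contribution to the component of $e$-degree $n(h)+\nu$ and weight $N(h)+\nu+2$ comes from the ``(Case 3)'' term $\chi([w,x_k])$ (resp.\ $\chi([w,y_k])$); while in cases (S3)--(S4) the only contribution to the component of $e$-degree $n(h)-1$ and weight $N(h)+1$ comes from the ``(Case 2)(b)'' term $\langle w,u_k\rangle$ (resp.\ $\langle w,v_k\rangle$). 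Applying the obvious projection $\pi_{ij}$ defined as in \eqref{pi} on $Q_\chi$ to the identity $(\tilde{\rho}_\chi(w)-\chi(w)\text{id})h(1_\chi)=0$ (which holds because $h\in U(\ggg,e)$, by the computation parallel to \eqref{rhochi}/\eqref{rhowchi}) then forces a nonzero linear combination of basis elements of $Q_\chi$ to vanish, a contradiction.

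The main obstacle will be purely bookkeeping: tracking the signs in Lemma \ref{commutative} through the induction (the analogue of the coefficients $k_{t',b,j}$ and of $\tau(\mathbf{b},\mathbf{d},\mathbf{b}',\mathbf{d}')$ in Lemma \ref{com c}) and verifying that the surviving coefficient on the right-hand side of the analogue of \eqref{0pi} (or of the analogue of \eqref{exten} in cases (S3)--(S4)) is indeed nonzero. Over $\mathbb{C}$ the nonvanishing is automatic whenever the corresponding $a_k$, $b_k$, $c_k$ or $d_k$ is nonzero, because no reduction modulo $p$ can kill an integer coefficient; this is precisely where the $\bbc$-version becomes cleaner than its modular counterpart. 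The rest of the proof is a verbatim translation of Steps 1--3 of the argument for Lemma \ref{hw}.
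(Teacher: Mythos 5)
Your proposal is correct and follows exactly the route the paper takes: the paper's own proof of Lemma~\ref{hwc} is literally ``repeat verbatim the proof of Lemma~\ref{hw} but apply Lemma~\ref{com c} in place of Lemma~\ref{commutative relations k2},'' and your case analysis (S1)--(S4), choice of dual element $w\in\mmm$, and use of the projections $\pi_{ij}$ are a faithful elaboration of that argument. Your observation that the nonvanishing of the surviving coefficient is automatic over $\mathbb{C}$ (no reduction modulo $p$) is also accurate.
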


\begin{proof}
Repeat verbatim the proof of Lemma~\ref{hw} but apply Lemma~\ref{com c} in place of Lemma~\ref{commutative relations k2}.
\end{proof}

\begin{rem}
In \cite[Theorem 2.5]{PS2}, Poletaeva-Serganova also formulate the similar statement to Lemma~\ref{hwc} bypassing Lemma~\ref{commutative} and Lemma~\ref{com c}.
However,  Lemma~\ref{commutative} and Lemma~\ref{com c} will play the key role for the construction theory of finite $W$-superalgebra $U({\ggg},e)$ in Theorem~\ref{PBWC}.
\end{rem}

\subsection{The PBW structure theory of finite $W$-superalgebras over $\bbc$}\label{cwc}
For $k\in\mathbb{Z}_+$ we denote by $\tilde{H}^k$ the linear span of all $0\neq h\in U({\ggg},e)$ with $n(h)\leqslant  k$. Put $\tilde{H}^{-1}=0$. It follows from Lemma~\ref{com c} that the subspaces $\{\tilde{H}^i\,|\,i\in\mathbb{Z}_+\}$ form a filtration of the algebra $U({\ggg},e)$. Moreover, Lemma~\ref{com c} implies that the graded algebra $\text{gr}(U({\ggg},e))=\bigoplus\limits_{i\geqslant 0}\tilde{H}^i/\tilde{H}^{i-1}$ is supercommutative.

Recall that $\{x_1,\cdots,x_l\}$ and $\{y_1,\cdots,y_q\}$ are $\mathbb{C}$-basis of ${\ggg}^e_{\bar{0}}$ and ${\ggg}^e_{\bar{1}}$, respectively. When dim\,${\ggg}(-1)_{\bar{1}}$ is odd, there is  $v_{\frac{\sfr+1}{2}}\in{\ggg}(-1)_{\bar{1}}\cap({\ggg}(-1)'_{\bar{1}})^{\bot }$. Set
\[Y_i:=\left\{
\begin{array}{ll}
x_i&\text{if}~1\leqslant  i\leqslant  l;\\
y_{i-l}&\text{if}~l+1\leqslant  i\leqslant  l+q;\\
v_{\frac{\sfr+1}{2}}&\text{if}~i=l+q+1 \mbox{ whenever }\sfr\mbox{ is odd}.
\end{array}
\right.
\]
By assumption it is immediate that $Y_i\in{\ggg}^e$ for $1\leqslant  i\leqslant  l+q$. The term $Y_{l+q+1}\notin{\ggg}^e$ occurs only when $\sfr=\dim\ggg(-1)_{\bar{1}}$ is odd. Assume that $Y_i$ falls in ${\ggg}(m_i)$ for $1\leqslant  i\leqslant  l+q+1$.
In the sequent arguments, we further set
$$q':=\begin{cases} q &\mbox{ if }\sfr \mbox{ is even; }\cr
q+1 & \mbox{ if }\sfr\mbox{ is odd}.\end{cases}$$
Let $\tilde{\Theta}_i$ denote the image of $\Theta_i\in U({\ggg},e)$ in $\text{gr}\,(U({\ggg},e))$. We are in the position to present the PBW structure of finite $W$-superalgebras over $\bbc$.
 We will further understand, as seeing in the last section, that the variation of the parity  of $\sfr$ gives rise to the change of the structure of finite $W$-superalgebras.  

\begin{theorem}\label{PBWC}
Let $U({\ggg},e)$ be a finite $W$-superalgebra over $\mathbb{C}$. The following PBW structural statements for $U(\ggg,e)$ hold, corresponding to the cases when  $\sfr=\dim\ggg(-1)_{\bo}$  is even and when $\sfr$ is odd, respectively.
\begin{itemize}
\item[(1)] There exist homogeneous elements $\Theta_1,\cdots,\Theta_{l}\in U({\ggg},e)_{\bar0}$ and $\Theta_{l+1},\cdots,\Theta_{l+q'}\in U({\ggg},e)_{\bar1}$ such that
\begin{equation*}
\begin{array}{ll}
&\Theta_k(1_\chi)\\=&(Y_k+\sum\limits_{\mbox{\tiny $\begin{array}{c}|\mathbf{a},\mathbf{b},\mathbf{c},\mathbf{d}|_e=m_k+2,\\|\mathbf{a}|
+|\mathbf{b}|+|\mathbf{c}|+|\mathbf{d}|\geqslant 2\end{array}$}}\lambda^k_{\mathbf{a},\mathbf{b},\mathbf{c},\mathbf{d}}x^{\mathbf{a}}
y^{\mathbf{b}}u^{\mathbf{c}}v^{\mathbf{d}}+\sum\limits_{|\mathbf{a},\mathbf{b},\mathbf{c},\mathbf{d}|_e<m_k+2}\lambda^k_{\mathbf{a},\mathbf{b},\mathbf{c},\mathbf{d}}x^{\mathbf{a}}
y^{\mathbf{b}}u^{\mathbf{c}}v^{\mathbf{d}})\otimes1_\chi
\end{array}
\end{equation*}
for $1\leqslant  k\leqslant  l+q$, where $\lambda^k_{\mathbf{a},\mathbf{b},\mathbf{c},\mathbf{d}}\in\mathbb{Q}$, and $\lambda^k_{\mathbf{a},\mathbf{b},\mathbf{c},\mathbf{d}}=0$ if $a_{l+1}=\cdots=a_m=b_{q+1}=\cdots=b_n=c_1=\cdots=c_s=
d_1=\cdots=d_{\lceil\frac{\sfr}{2}\rceil}=0$.

Additionally  set $\Theta_{l+q+1}(1_\chi)=v_{\frac{\sfr+1}{2}}\otimes1_\chi$  when $\sfr$ is odd.

\item[(2)] The monomials $\Theta_1^{a_1}\cdots\Theta_l^{a_l}\Theta_{l+1}^{b_1}\cdots\Theta_{l+q'}^{b_{q'}}$ with $a_i\in\mathbb{Z}_+,~b_j\in\Lambda'_1$ for $1\leqslant i\leqslant l$ and $1\leqslant j\leqslant q'$ form a basis of $U({\ggg},e)$ over $\mathbb{C}$.

\item[(3)] For $1\leqslant  i\leqslant  l+q'$,  the elements $\tilde{\Theta}_i=\Theta_i+\tilde{H}^{m_i+1}\in\text{gr}\,(U({\ggg},e))$ are algebraically independent and generate $\text{gr}\,(U({\ggg},e))$. In particular, $\text{gr}\,(U({\ggg},e))$ is a graded polynomial superalgebra with homogeneous generators of degrees $m_1+2,\cdots,m_{l+q'}+2$.

\item[(4)] For  $1\leqslant  i,j\leqslant  l+q'$, we have
$$[\Theta_i,\Theta_j]\in\tilde{H}^{m_i+m_j+2}.$$
Moreover, if the elements $Y_i,~Y_j\in {\ggg}^e$ for $1\leqslant  i,j\leqslant  l+q$ satisfy $[Y_i,Y_j]=\sum\limits_{k=1}^{l+q}\alpha_{ij}^kY_k$ in ${\ggg}^e$, then
\begin{equation}\label{Thetaa2}
\begin{array}{llllll}
&[\Theta_i,\Theta_j]&\equiv&\sum\limits_{k=1}^{l+q}\alpha_{ij}^k\Theta_k+q_{ij}(\Theta_1,\cdots,\Theta_{l+q'})&(\text{mod}~\tilde{H}^{m_i+m_j+1}),
\end{array}
\end{equation}where $q_{ij}$ is a super-polynomial in $l+q'$ variables in $\mathbb{Q}$ whose constant term and linear part are zero.

\item[(5)] When $\sfr$ is odd, if one of $i$ and $j$ happens to be $l+q+1$, we can find  super-polynomials $F_{i,l+q+1}$ and $F_{l+q+1,j}$ in $l+q+1$ invariants over $\mathbb{Q}$ ($1\leqslant i,j\leqslant l+q$) such that
\begin{equation}\label{il+q+1}
\begin{array}{lll}
[\Theta_i,\Theta_{l+q+1}]&=&F_{i,l+q+1}(\Theta_1,\cdots,\Theta_{l+q+1}),\\\,
[\Theta_{l+q+1},\Theta_j]&=&F_{l+q+1,j}(\Theta_1,\cdots,\Theta_{l+q+1}),
\end{array}
\end{equation}
where the $e$-degree for all the monomials in the $\bbc$-span of $F_{i,l+q+1}$'s and $F_{l+q+1,j}$'s is less than $m_i+1$ and $m_j+1$, respectively.

  Moveover, if $i=j=l+q+1$, then \begin{equation}\label{idid}
  [\Theta_{l+q+1},\Theta_{l+q+1}]=\text{id}.
  \end{equation}
\end{itemize}
\end{theorem}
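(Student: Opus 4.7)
The plan is to deduce Theorem~\ref{PBWC} from its modular counterpart (Theorem~\ref{reduced Wg}) via the admissible-ring machinery set up in \S\ref{Chevalleybasis}--\S\ref{modulo p}, in the spirit of Premet's treatment of finite $W$-algebras \cite{P2,P7}. First I would introduce the integral version $U(\ggg_A,e):=(Q_{\chi,A})^{\mathrm{ad}\,\mmm_A}$ and show, after possibly enlarging $A$, that $U(\ggg_A,e)\otimes_A\mathbb{C}\cong U(\ggg,e)$ and that there is a natural $A$-algebra map $U(\ggg_A,e)\otimes_A\bbk\to U_\chi(\ggg_\bbk,e)$ which is compatible with the Kazhdan filtration and with the $e$-degree/weight bigrading used throughout \S\ref{rew}. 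The key technical point is that the co-basis $\{x_i,y_j,u_k,v_\ell\}$ was chosen inside $\ggg_A$, so monomial decompositions of vectors in $Q_{\chi,A}$ reduce compatibly mod $p$.

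Next I would construct the generators $\Theta_i$. For each basis vector $Y_i\in\ggg^e$ (and $Y_{l+q+1}=v_{(\sfr+1)/2}$ when $\sfr$ is odd), Lemma~\ref{hwc} tells us that any element with leading term $Y_i\otimes1_\chi$ would work; to produce one, I would build a candidate $\Theta_i$ monomial-by-monomial, using Lemma~\ref{commutative} to compute $\mathrm{ad}\,\mmm$ on any tentative expression and Lemma~\ref{com c} to correct the non-invariant pieces by adding lower-$e$-degree (or lower-weight) tails drawn from the co-basis. This produces an element of $U(\ggg_A,e)\otimes_A\mathbb{Q}$ (hence of $U(\ggg,e)$) with the required leading term and with all coefficients in $\mathbb{Q}$. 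The existence of such a correction is guaranteed because the modular argument already proves it for each sufficiently large $p$, and the $A$-linear system that defines a valid correction therefore has a solution over the fraction field of $A$. For $\Theta_{l+q+1}$ the construction is trivial: set $\Theta_{l+q+1}(1_\chi)=v_{(\sfr+1)/2}\otimes 1_\chi$ and check $\mmm$-invariance directly using $\langle v_{(\sfr+1)/2},\mmm\cap\ggg(-1)\rangle=0$.

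Once the $\Theta_i$'s are in hand, statements (2) and (3) reduce to a leading-term argument. Given $h\in U(\ggg,e)$, Lemma~\ref{hwc} forces every element of $\Lambda_h^{\max}$ to be of the form $(\mathbf a,\mathbf b,\mathbf 0,\mathbf 0)$ or $(\mathbf a,\mathbf b,\mathbf 0,c\,\mathbf e_{(\sfr+1)/2})$; on the other hand, by iteratively applying Lemma~\ref{com c}, any monomial $\Theta_1^{a_1}\cdots\Theta_{l+q'}^{b_{q'}}$ has exactly the predicted leading monomial $Y_1^{a_1}\cdots Y_{l+q'}^{b_{q'}}\otimes 1_\chi$ with coefficient $\pm 1$. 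Induction on $(n(h),N(h))$ lexicographically then writes $h$ uniquely in the form of (2), and passing to $\mathrm{gr}\,U(\ggg,e)$ gives (3). Parts (4) and (5), including the crucial identity \eqref{idid}, follow by computing $[\Theta_i,\Theta_j](1_\chi)$: the Kazhdan-degree bound in (4) is immediate from $e$-degree tracking in Lemma~\ref{com c}; the formula~\eqref{Thetaa2} comes by comparing leading terms of $[\Theta_i,\Theta_j]$ with those of $\sum\alpha_{ij}^k\Theta_k$, then expressing the discrepancy in the $\Theta$-basis via (2); finally \eqref{idid} reduces to $2v_{(\sfr+1)/2}^2\otimes 1_\chi=[v_{(\sfr+1)/2},v_{(\sfr+1)/2}]\otimes 1_\chi=\langle v_{(\sfr+1)/2},v_{(\sfr+1)/2}\rangle\otimes 1_\chi=1\otimes 1_\chi$.

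The main obstacle will be the existence step for $\Theta_i$, i.e.\ the rationality of the corrections: rather than asserting it by ``admissible reduction'' alone, I expect to have to argue that the $A$-lattice $U(\ggg_A,e)$ specializes onto $U_\chi(\ggg_\bbk,e)$ with the same leading-term/PBW-rank data as established in Theorem~\ref{reduced Wg}, and that this forces the free $A$-rank of $U(\ggg_A,e)$ (in each Kazhdan degree) to equal the number of predicted monomials. Once this rank equality is secured, the modular $\theta_i$'s can be lifted to $U(\ggg_A,e)$ for infinitely many $p$, and a standard density argument over the Noetherian domain $A$ produces $\Theta_i$'s with the prescribed leading term defined over~$\mathbb{Q}$; everything else then falls out of the leading-term calculus developed in Lemmas~\ref{commutative} and~\ref{com c}.
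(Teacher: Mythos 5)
Your proposal is correct and follows essentially the same route as the paper: the paper proves Theorem~\ref{PBWC} by the ``admissible'' reduction-mod-$p$ procedure of Premet (\cite[Theorem 4.6]{P2}), translating the existence of the $\Theta_k$ with prescribed leading terms into a linear system whose solvability over $\mathbb{Q}$ follows from its solvability modulo infinitely many primes (Theorem~\ref{reduced Wg}), and then deducing parts (2)--(5) from the leading-term calculus of Lemma~\ref{com c} and Lemma~\ref{hwc} exactly as you outline. Your explicit attention to the $A$-lattice $U(\ggg_A,e)$ and the rank-comparison step simply spells out details the paper delegates to \cite{P2}.
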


In \cite[\S4]{P2}, Premet obtained the PBW theorem for the finite $W$-algebras over $\mathbb{C}$ through the procedure of ``admissible'' with aid of reduced $W$-algebras over ${\bbk}$. Since the ``modulo $p$'' version of Theorem ~\ref{PBWC} has been formulated in Theorem~\ref{reduced Wg},
and the choice of admissible ring $A$ has nothing to do with the super property of ${\ggg}$,  we can prove the theorem by the same spirit of Premet's argument for  the ordinary finite $W$-algebra case  based on the results of reduced $W$-superalgebras over ${\bbk}$ in \S\ref{rew}. Explicitly speaking, in virtue of the lemmas introduced in \S\ref{swc}, one can translate the formulas in Statement (1) of the theorem to a system of linear equations, then (1) follows by the discussion on the existence of solution for these equations over $\mathbb{Q}$ by the knowledge of field theory. The remaining consequence can be easily obtained by the same consideration as Theorem~\ref{reduced Wg} and Remark~\ref{redun}.
Now we only need to  give a sketchy proof.

\begin{proof}
Repeat verbatim the proof of Theorem 4.6 in \cite{P2} but apply Theorem~\ref{reduced Wg} (also Remark~\ref{redun}), Lemma~\ref{com c} and Lemma~\ref{hwc} in place of Theorem 3.4, Lemma 4.4 and Lemma 4.5 in \cite{P2} respectively, then the consequence follows. For more details one refers to \cite[Theorem 4.6]{P2}.
\end{proof}

\begin{rem} \label{polynomial}
Recall that we calculated the dimension of $U_\chi({\ggg}_{\bbk},e)$ for the proof of Proposition~\ref{reduced basis}. However, since the dimension of finite $W$-superalgebra $U({\ggg},e)$ over $\mathbb{C}$ is infinite, similar conclusion as Proposition~\ref{reduced basis} for the $\bbc$-algebra $U(\ggg,e)$ can not be obtained directly. Therefore, one can not establish Theorem~\ref{PBWC} directly by the same means as Theorem~\ref{reduced Wg}, bypassing the knowledge of the $\bbk$-algebra $U_{\chi}(\ggg_\bbk,e)$ in \S\ref{rew}. It is also remarkable that all the coefficients for the generators of $U({\ggg},e)$ we obtained in Theorem~\ref{PBWC}(1) are over $\mathbb{Q}$.
\end{rem}

By Theorem~\ref{PBWC}(4), there exist super-polynomials $F_{ij}$'s of $l+q'$ indeterminants   over $\bbq$ for $i,j=1,\cdots,l+q$ with the first $l$ indeterminants  being even, and the others being odd, such that
$$[\Theta_i,\Theta_j]=F_{i,j}(\Theta_1,\cdots,\Theta_{l+q'}),\quad i,j=1,\cdots,l+q,$$
while $F_{ij}(\Theta_1,\cdots,\Theta_{l+q'})\equiv\sum_{k=1}^{l+q}\alpha_{ij}^k\Theta_k+q_{ij}(\Theta_1,\cdots,\Theta_{l+q'})(\mbox{mod }\tilde H^{m_i+m_j+1})$ for $i,j=1,\cdots,l+q$, and here the super-polynomials are still under the same sense as in Remark \ref{redun}(2). Note that by Theorem~\ref{PBWC}(5), when $\sfr$ is odd there are still super-polynomials $F_{i,j}$'s of $l+q+1$ indeterminants   over $\bbq$ with $i$ or $j$ equals $l+q+1$. By the analogous arguments of \cite[Lemma 4.1]{P4}, one can prove the following result.
\begin{theorem}\label{relationc}
The finite $W$-superalgebra $U(\mathfrak{g},e)$ over $\bbc$ is generated by the $\mathbb{Z}_2$-homogeneous elements $\Theta_1,\cdots,\Theta_{l}\in U(\mathfrak{g},e)_{\bar0}$ and $\Theta_{l+1},\cdots,\Theta_{l+q'}\in U(\mathfrak{g},e)_{\bar1}$ subject to the relations
$$[\Theta_i,\Theta_j]=F_{ij}(\Theta_1,\cdots,\Theta_{l+q'})$$
with $1\leqslant i,j\leqslant l+q'$.
\end{theorem}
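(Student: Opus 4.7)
The plan is to realize $U(\mathfrak g, e)$ as an abstractly presented superalgebra and then compare associated graded algebras under the Kazhdan filtration, following the strategy of Premet's \cite[Lemma 4.1]{P4} in the super setting. First I would form the free $\mathbb C$-super-algebra $\widetilde W$ on generators $\widetilde\Theta_1,\dots,\widetilde\Theta_{l+q'}$, declaring $\widetilde\Theta_i$ even for $1\le i\le l$ and odd for $l+1\le i\le l+q'$. Let $J\subseteq\widetilde W$ be the two-sided ideal generated by $[\widetilde\Theta_i,\widetilde\Theta_j]-F_{ij}(\widetilde\Theta_1,\dots,\widetilde\Theta_{l+q'})$ for all $1\le i,j\le l+q'$, and set $W^{\mathrm{abs}}:=\widetilde W/J$. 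Since Theorem~\ref{PBWC}(4)--(5) tells us that the $\Theta_i\in U(\mathfrak g, e)$ satisfy precisely these relations, there is a well-defined superalgebra homomorphism $\pi:W^{\mathrm{abs}}\to U(\mathfrak g, e)$ sending $\widetilde\Theta_i\mapsto\Theta_i$; it is surjective since the $\Theta_i$ generate $U(\mathfrak g, e)$ by Theorem~\ref{PBWC}(2). The remaining task is to prove that $\pi$ is also injective.

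Next I would transport the Kazhdan filtration to the abstract side. Assign $\widetilde\Theta_i$ the degree $m_i+2$ and filter $\widetilde W$ multiplicatively. The generating elements of $J$ are filtration-compatible: every monomial appearing in $F_{ij}$ has Kazhdan degree at most $m_i+m_j+2$---this is built into the construction of $F_{ij}$ via Theorem~\ref{PBWC}(4) and the identities (\ref{il+q+1})--(\ref{idid})---while $\widetilde\Theta_i\widetilde\Theta_j$ has degree $(m_i+2)+(m_j+2)$. Hence $J$ is a filtered ideal, $W^{\mathrm{abs}}$ inherits a filtration with respect to which $\pi$ is strict, and the images $\bar\Theta_i\in\mathrm{gr}\,W^{\mathrm{abs}}$ of the generators supercommute. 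This yields a canonical surjection of super-symmetric algebras
\begin{equation*}
\sigma:\mathbb C[\bar\Theta_1,\dots,\bar\Theta_{l+q'}]\twoheadrightarrow \mathrm{gr}\,W^{\mathrm{abs}},
\end{equation*}
where the domain is polynomial in $\bar\Theta_1,\dots,\bar\Theta_l$ and exterior in the remaining generators; the odd-square vanishing is automatic from the relation $2\widetilde\Theta_i^2=F_{ii}$ for odd $i$, whose right-hand side lies in strictly smaller filtration degree than $2(m_i+2)$. In particular the anomalous identity summand $F_{l+q+1,l+q+1}=\mathrm{id}$ from (\ref{idid}) sits in Kazhdan degree $0<2$ and disappears in $\mathrm{gr}\,W^{\mathrm{abs}}$.

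Finally I would close the argument via Theorem~\ref{PBWC}(3). The composite $\mathrm{gr}\,\pi\circ\sigma$ carries each $\bar\Theta_i$ to the class of $\Theta_i$ in $\mathrm{gr}\,U(\mathfrak g, e)$, and by Theorem~\ref{PBWC}(3) these classes are algebraically independent generators of the super-symmetric algebra $\mathrm{gr}\,U(\mathfrak g, e)$. Therefore $\mathrm{gr}\,\pi\circ\sigma$ is an isomorphism, forcing both $\sigma$ and $\mathrm{gr}\,\pi$ to be isomorphisms. A standard lemma---a filtered surjection whose associated graded is injective is itself injective, proved by descending induction on the filtration degree of a putative kernel element down to $F_0=\mathbb C$---then upgrades $\mathrm{gr}\,\pi$ to an isomorphism $\pi:W^{\mathrm{abs}}\xrightarrow{\sim}U(\mathfrak g, e)$, which is exactly the claimed presentation.

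The main obstacle I anticipate is the bookkeeping around the abstract reinterpretation of the $F_{ij}$. These objects were produced from inside $U(\mathfrak g, e)$, so when re-imported as relation polynomials on $\widetilde W$ one must verify that (i) they carry well-defined rational coefficients, which follows from Remark~\ref{polynomial} and the uniqueness implicit in the PBW-basis statement of Theorem~\ref{PBWC}(2), and (ii) the super-antisymmetry $F_{ji}=-(-1)^{|\Theta_i||\Theta_j|}F_{ij}$ holds in $\widetilde W$, so that the relations are self-consistent. Both transfer from identities satisfied by the $\Theta_i$ in $U(\mathfrak g, e)$, but the verification demands care---particularly in the $\sfr$ odd case, where the extra generator $\widetilde\Theta_{l+q+1}$ and the constant summand appearing in (\ref{idid}) require individual attention to confirm that they remain inside the Kazhdan filtration bounds controlling the associated graded step.
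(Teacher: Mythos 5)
Your proposal is correct, and its skeleton coincides with the one the paper delegates to Premet's \cite[Lemma 4.1]{P4}: present an abstract superalgebra by the generators $\widetilde\Theta_i$ and the relations $[\widetilde\Theta_i,\widetilde\Theta_j]=F_{ij}$, observe that the canonical map onto $U(\ggg,e)$ is surjective by Theorem \ref{PBWC}(2), and use the PBW theorem to force injectivity. Where you genuinely diverge is in how the injectivity step is executed. The paper (following \cite{P4}) runs a direct straightening induction --- upward on the Kazhdan degree and downward on the length of a product --- to show that the ordered monomials $\widetilde\Theta_1^{a_1}\cdots\widetilde\Theta_{l+q'}^{b_{q'}}$ already span the abstract quotient, and then invokes the linear independence of their images from Theorem \ref{PBWC}(2). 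You instead filter the abstract presentation, note that each $F_{ij}$ sits in Kazhdan degree at most $m_i+m_j+2<(m_i+2)+(m_j+2)$ (which is exactly what Theorem \ref{PBWC}(4)--(5) and \eqref{idid} guarantee, including the constant term in the $i=j=l+q+1$ case), so that the symbols supercommute in $\mathrm{gr}\,W^{\mathrm{abs}}$, and then compare with the polynomial superalgebra structure of $\mathrm{gr}\,U(\ggg,e)$ from Theorem \ref{PBWC}(3). The two packagings are equivalent in content --- your ``filtered surjection with injective associated graded is injective'' lemma is the same induction on Kazhdan degree in disguise, and your surjection $\sigma$ encodes the straightening done once at the graded level --- but your version is cleaner to state and isolates precisely which degree bounds on the $F_{ij}$ are being used. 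The consistency worry you raise at the end (super-antisymmetry of the $F_{ij}$) is actually harmless: the relations for all ordered pairs are imposed simultaneously and are all satisfied by the actual $\Theta_i$ in $U(\ggg,e)$, so redundancy among them cannot obstruct the argument.
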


\subsection{Revisit to Theorem \ref{graded W}}  Define the element $\Theta\in U({\ggg},e)$ by letting $\Theta(1_\chi)=v_{\frac{\sfr+1}{2}}\otimes1_\chi$ for the case when dim\,${\ggg}(-1)_{\bar1}$ is odd. From consequences of Lemma~\ref{hwc}  and Theorem~\ref{PBWC}, Theorem~\ref{graded W} can be easily obtained. We omit the detailed proof.

\subsection{Some other definition of finite $W$-superalgebras}

Recall in \cite[Remark 70]{W} Wang defined a reduced $W$-superalgebra over ${\bbk}=\overline{\mathbb{F}}_p$ in another way, i.e.
$W'_{\chi,{\bbk}}: =(Q^\chi_\chi)^{\text{ad}\,{\mmm}'_{{\bbk}}}$,
which seems to make better sense (there this superalgebra is called the modular $W$-superalgebra).
In light of this
 (also cf. \cite{GG}), we can also define the corresponding superalgebra over $\bbc$.

\begin{defn}\label{rewcc} Let $\ggg$ be a basic Lie superalgebra over $\bbc$, and retain the data appearing as before. Define the $\bbc$-algebra
 $$W'_\chi:=(U({\ggg})/I_\chi)^{\text{ad}\,{\mmm}'}\cong Q_\chi^{\text{ad}\,{\mmm}'}
\equiv\{\bar{y}\in U({\ggg})/I_\chi \mid [a,y]\in I_\chi, \forall a\in{\mmm}'\},$$
where $\bar{y}_1\cdot\bar{y}_2:=\overline{y_1y_2}$ for all $\bar{y}_1,\bar{y}_2\in W'_\chi$.
\end{defn}

When $\sfr=\text{dim}\,{\ggg}(-1)_{\bar1}$ is even, since ${\mmm}'={\mmm}$ by the definition, the isomorphism in Theorem \ref{W-C2} shows that $U({\ggg},e)\cong W'_\chi$ as $\mathbb{C}$-algebras. However, the situation changes in the case when $\text{dim}\,{\ggg}(-1)_{\bar1}$ is odd. Since ${\mmm}$ is a proper subalgebra of ${\mmm}'$, it follows that $W'_\chi$ is a subalgebra of $Q_\chi^{\text{ad}{\mmm}}=U({\ggg},e)$. In the following, we give a precise connection between the $\bbc$-algebras $W'_\chi$ and $U(\ggg,e)$.

\begin{prop}\label{small}
For the case when $\text{dim}\,{\ggg}(-1)_{\bar1}$ is odd, the $\mathbb{C}$-algebra $W'_\chi\cong Q_\chi^{\text{ad}\,{\mmm}'}$ is a proper subalgebra of $U({\ggg},e)\cong Q_\chi^{\text{ad}\,{\mmm}}$. Moreover, we have
$$Q_\chi^{\text{ad}\,{\mmm}'}=[v_{\frac{\sfr+1}{2}},Q_\chi^{\text{ad}\,{\mmm}}].$$
\end{prop}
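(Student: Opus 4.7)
The plan is to use the fact that $v:=v_{\frac{\sfr+1}{2}}\in\ggg(-1)_\bo$ is an odd element satisfying two key arithmetic identities inherited from the non-degenerate bilinear form: first, $[v,v]\in\ggg(-2)\subseteq\mmm$ with $\chi([v,v])=\langle v,v\rangle=1$; second, $[\mmm,v]\subseteq\mmm$ (by weight reasoning, since $\mmm=\bigoplus_{i\leqslant -2}\ggg(i)\oplus\ggg(-1)'$) and $\chi([a,v])=\langle a,v\rangle=0$ for every $a\in\mmm$ (for $a\in\ggg(i),i\leqslant-2$ the bracket lands outside $\ggg(-2)$; for $a\in\ggg(-1)'$ isotropy against $v\in(\ggg(-1)')^{\bot}$ kills the pairing). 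These two facts also ensure that $\operatorname{ad}v$ descends to a well-defined operator on $Q_\chi$, since $[v,y(x-\chi(x))]=[v,y](x-\chi(x))+(-1)^{|v||y|}y[v,x]$ and $[v,x]-\chi([v,x])=[v,x]\in N_\chi$.

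The inclusion $[v,Q_\chi^{\operatorname{ad}\mmm}]\subseteq Q_\chi^{\operatorname{ad}\mmm'}$ comes from the super-Jacobi identity. For $\bar z\in Q_\chi^{\operatorname{ad}\mmm}$ and $a\in\mmm$ homogeneous,
$$\operatorname{ad}_a\operatorname{ad}_v(\bar z)=\operatorname{ad}_{[a,v]}(\bar z)+(-1)^{|a||v|}\operatorname{ad}_v\operatorname{ad}_a(\bar z)=0,$$
because $[a,v]\in\mmm$ and $\operatorname{ad}_a(\bar z)=0$; and for the remaining odd generator of $\mmm'$, $\operatorname{ad}_v^2(\bar z)=\tfrac{1}{2}\operatorname{ad}_{[v,v]}(\bar z)=0$ since $[v,v]\in\mmm$.

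For the reverse inclusion, given $\bar w\in Q_\chi^{\operatorname{ad}\mmm'}\subseteq Q_\chi^{\operatorname{ad}\mmm}$, the idea is to exhibit the preimage as $v\bar w$ (left multiplication in $Q_\chi$). The basic left-multiplication identity on $Q_\chi^{\operatorname{ad}\mmm}$ reads $b\bar z=(-1)^{|b||\bar z|}\chi(b)\bar z+\operatorname{ad}_b(\bar z)$ for $b\in\mmm$, so combined with $\chi([a,v])=0$ and $\bar w\in Q_\chi^{\operatorname{ad}\mmm}$ one obtains
$$\operatorname{ad}_a(v\bar w)=[a,v]\bar w+(-1)^{|a||v|}v\operatorname{ad}_a(\bar w)=[a,v]\bar w=0\qquad(a\in\mmm),$$
hence $v\bar w\in Q_\chi^{\operatorname{ad}\mmm}$. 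Applying super-Leibniz once more and using $\operatorname{ad}_v(\bar w)=0$ together with $[v,v]\bar w=\chi([v,v])\bar w=\bar w$ yields
$$\operatorname{ad}_v(v\bar w)=[v,v]\bar w-v\operatorname{ad}_v(\bar w)=\bar w,$$
so $\bar w\in[v,Q_\chi^{\operatorname{ad}\mmm}]$.

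Finally, properness of $W'_\chi\subseteq U(\ggg,e)$ is witnessed by the distinguished generator $\Theta_{l+q+1}$ furnished by Theorem~\ref{PBWC}, which satisfies $\Theta_{l+q+1}(1_\chi)=v\otimes 1_\chi$ and therefore lies in $Q_\chi^{\operatorname{ad}\mmm}$; yet the direct computation $\operatorname{ad}_v(v+I_\chi)=[v,v]+I_\chi=\chi([v,v])\cdot 1_\chi=1_\chi\neq 0$ shows $\Theta_{l+q+1}\notin Q_\chi^{\operatorname{ad}\mmm'}$. The only delicate step in this plan is the verification that $\operatorname{ad}v$ descends to $Q_\chi$ and the careful bookkeeping of Koszul signs in the super-Leibniz rule when computing $\operatorname{ad}_v(v\bar w)$; once those are checked, everything reduces to the two normalizations $\chi([v,v])=1$ and $\chi([\mmm,v])=0$.
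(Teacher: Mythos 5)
Your proposal is correct and follows essentially the same route as the paper's proof: the forward inclusion via the super-Jacobi identity together with $[\mmm,v]\subseteq\mmm$ and $[v,[v,\cdot]]=\tfrac12\operatorname{ad}_{[v,v]}$, the reverse inclusion by multiplying a given invariant by $v$ and using $\chi([v,v])=1$, and properness witnessed by $\Theta_{l+q+1}$. The only cosmetic difference is that you use left multiplication $v\bar w$ where the paper uses $(-1)^{|y|}yv\otimes 1_\chi$; your explicit check that $\operatorname{ad}v$ descends to $Q_\chi$ is a welcome detail the paper leaves implicit.
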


\begin{proof}

 At first, we claim that $Q_\chi^{\text{ad}\,{\mmm}'}$ is properly contained in $Q_\chi^{\text{ad}\,{\mmm}}$. Recall Theorem~\ref{PBWC}(1) shows that $\Theta_{l+q+1}(1_\chi)=v_{\frac{\sfr+1}{2}}\otimes1_\chi\in Q_\chi^{\text{ad}\,{\mmm}}$. By the definition we have $v_{\frac{\sfr+1}{2}}\in{\mmm}'$, and $[v_{\frac{\sfr+1}{2}},v_{\frac{\sfr+1}{2}}\otimes1_\chi]=[v_{\frac{\sfr+1}{2}},v_{\frac{\sfr+1}{2}}]\otimes
1_\chi=1\otimes\chi([v_{\frac{\sfr+1}{2}},v_{\frac{\sfr+1}{2}}])1_\chi=1\otimes1_\chi$. Thus $v_{\frac{\sfr+1}{2}}\otimes1_\chi\in
Q_\chi^{\text{ad}\,{\mmm}}$, but not in $Q_\chi^{\text{ad}\,{\mmm}'}$.

Next we consider the relation between the $\mathbb{C}$-algebras $Q_\chi^{\text{ad}\,{\mmm}'}$ and $Q_\chi^{\text{ad}\,{\mmm}}$.

(1) For any $\mathbb{Z}_2$-homogeneous element $x\in Q_\chi^{\text{ad}\,{\mmm}}$, we claim that
$$[v_{\frac{\sfr+1}{2}},x]\subseteq Q_\chi^{\text{ad}\,{\mmm}'}.$$

(i) Recall that $\Theta_{l+q+1}(1_\chi)=v_{\frac{\sfr+1}{2}}\otimes1_\chi\in Q_\chi^{\text{ad}\,{\mmm}}$. Since $x\in Q_\chi^{\text{ad}\,{\mmm}}$, it follows from the Jacobi identity that $[v_{\frac{\sfr+1}{2}},x]\in Q_\chi^{\text{ad}\,{\mmm}}$.

(ii) Since $v_{\frac{\sfr+1}{2}}\in{\ggg}(-1)_{\bar{1}}$, then $[v_{\frac{\sfr+1}{2}},v_{\frac{\sfr+1}{2}}]\in{\ggg}(-2)_{\bar0}\subseteq{\mmm}_{\bar0}$, and it follows from $x\in Q_\chi^{\text{ad}\,{\mmm}}$ that $[[v_{\frac{\sfr+1}{2}},v_{\frac{\sfr+1}{2}}],x]=0$. On the other hand,
\[\begin{array}{ccl}
[[v_{\frac{\sfr+1}{2}},v_{\frac{r+1}{2}}],x]&=&[v_{\frac{\sfr+1}{2}},[v_{\frac{\sfr+1}{2}},x]]+(-1)^{|x|}[[v_{\frac{\sfr+1}{2}},x],v_{\frac{\sfr+1}{2}}]\\
&=&[v_{\frac{\sfr+1}{2}},[v_{\frac{\sfr+1}{2}},x]]-(-1)^{|x|}\cdot(-1)^{|x|+1}[v_{\frac{\sfr+1}{2}},[v_{\frac{\sfr+1}{2}},x]]\\
&=&[v_{\frac{\sfr+1}{2}},[v_{\frac{\sfr+1}{2}},x]]+(-1)^{2|x|+2}[v_{\frac{\sfr+1}{2}},[v_{\frac{\sfr+1}{2}},x]]\\
&=&2[v_{\frac{\sfr+1}{2}},[v_{\frac{\sfr+1}{2}},x]],
\end{array}
\]
then $[v_{\frac{\sfr+1}{2}},[v_{\frac{\sfr+1}{2}},x]]=0$, i.e. $[v_{\frac{\sfr+1}{2}},x]\in Q_\chi^{\text{ad}v_{\frac{\sfr+1}{2}}}$.

Since ${\mmm}'={\mmm}\,\oplus\,\mathbb{C}v_{\frac{\sfr+1}{2}}$ as vector spaces, and (i) and (ii) show that $[v_{\frac{\sfr+1}{2}},x]\in Q_\chi^{\text{ad}\,{\mmm}'}$, then $[v_{\frac{\sfr+1}{2}},Q_\chi^{\text{ad}\,{\mmm}}]\subseteq  Q_\chi^{\text{ad}\,{\mmm}'}$ follows by the variation of $x$.

(2) We claim that the reverse of (1) is also true, i.e. $[v_{\frac{\sfr+1}{2}},Q_\chi^{\text{ad}\,{\mmm}}]\supseteq Q_\chi^{\text{ad}\,{\mmm}'}$.

As $v_{\frac{\sfr+1}{2}}\in{\mmm}'$, and $v_{\frac{\sfr+1}{2}}\otimes1_\chi\in Q_\chi^{\text{ad}\,{\mmm}}$, then for any $\mathbb{Z}_2$-homogeneous element $y\otimes1_\chi\in Q_\chi^{\text{ad}\,{\mmm}'}\subseteq Q_\chi^{\text{ad}\,{\mmm}}$, we have $[v_{\frac{\sfr+1}{2}},y\otimes1_\chi]=0$, and $yv_{\frac{\sfr+1}{2}}\otimes1_\chi\in Q_\chi^{\text{ad}\,{\mmm}}$ by the Jacobi identity. Since$$[v_{\frac{\sfr+1}{2}},yv_{\frac{\sfr+1}{2}}\otimes1_\chi]=([v_{\frac{\sfr+1}{2}},y]v_{\frac{\sfr+1}{2}}+(-1)^{|y|}y[v_{\frac{\sfr+1}{2}},v_{\frac{\sfr+1}{2}}])\otimes1_\chi=(-1)^{|y|}y\otimes1_\chi,$$i.e. $y\otimes1_\chi=[v_{\frac{\sfr+1}{2}},(-1)^{|y|}yv_{\frac{\sfr+1}{2}}\otimes1_\chi]$, it is straightforward that $[v_{\frac{\sfr+1}{2}},Q_\chi^{\text{ad}\,{\mmm}}]\supseteq Q_\chi^{\text{ad}\,{\mmm}'}$ by the arbitrary of $y$.

Summing up, we complete the proof.
\end{proof}

{\bf Acknowledgements}\quad
Most parts of the results in this article were established in 2012 when the first author was a student under the supervision of the second author, which can be found in the first author's PhD thesis \cite{Zeng} in East China Normal University. The authors would like to thank Weiqiang Wang and Lei Zhao whose work on super version of Kac-Weisfeiler property stimulated them to do the present research. The authors got much help from the discussion with Hao Chang and Weiqiang Wang. The authors express great thanks to them.

\end{document}